\newtheorem{theorem}{Theorem}[section]
\newtheorem{cor}[theorem]{Corollary}
\newtheorem{prop}[theorem]{Proposition}
\newtheorem{lemma}[theorem]{Lemma}
\newtheorem{claim}[theorem]{Claim}
\newproof{proof}{Proof}
\newproof{poc}{Proof of Claim}
\newdefinition{remark}{Remark}
\newdefinition{defn}{Definition}
\newdefinition{ques}{Question}
\newdefinition{obs}{Observation}
\newdefinition{problem}{Problem}
\newcommand{\Aut}{\mathop{\mathrm{Aut}}}
\newcommand{\Span}{\mathop{\mathrm{Span}}}
\newcommand{\desc}{\mathop{\mathrm{desc}}}
\newcommand{\bin}{\mathop{\mathrm{bin}}}
\newcommand{\size}{\mathop{\mathrm{size}}}
\newcommand{\Inn}{\mathop{\mathrm{Inn}}}
\newcommand{\Soc}{\mathop{\mathrm{Soc}}}
\newcommand{\Out}{\mathop{\mathrm{Out}}}
\newcommand{\Comp}{\mathop{\mathrm{Comp}}}
\newcommand{\GL}{\mathop{\mathrm{GL}}\nolimits}
\newcommand{\PSL}{\mathop{\mathrm{PSL}}\nolimits}
\newcommand{\SL}{\mathop{\mathrm{SL}}\nolimits}
\newcommand{\Sz}{\mathop{\mathrm{Sz}}\nolimits}
\newcommand{\GU}{\mathop{\mathrm{GU}}\nolimits}
\newcommand{\PSU}{\mathop{\mathrm{PSU}}\nolimits}
\newcommand{\SU}{\mathop{\mathrm{SU}}\nolimits}
\newcommand{\PSp}{\mathop{\mathrm{PSp}}\nolimits}
\newcommand{\POm}{\mathop{\mathrm{P}\Omega}\nolimits}
\newcommand{\GF}{\mathop{\mathrm{GF}}\nolimits}
\newcommand{\Sp}{\mathop{\mathrm{Sp}}\nolimits}
\newcommand{\GO}{\mathop{\mathrm{GO}}\nolimits}
\newcommand{\SO}{\mathop{\mathrm{SO}}\nolimits}
\newcommand{\PSO}{\mathop{\mathrm{PSO}}\nolimits}
\DeclareMathOperator{\Mod}{mod}
\newcommand{\F}{\mathbb{F}}
\renewcommand{\angle}[1]{\langle#1\rangle} % puts argument in angle brackets
\newcommand{\sympl}[1]{\langle #1 \rangle}
\begin{document}

\begin{frontmatter}
\title{Aspects of the commuting graph\tnoteref{t1}}
\tnotetext[t1]{To the memory of Richard Parker}
\author[1]{V. Arvind}
\ead[1]{arvind@imsc.res.in}
\author[2]{Peter J. Cameron\corref{cor}}
\ead[2]{pjc20@st-andrews.ac.uk}
\cortext[cor]{Corresponding author}
\author[3]{Xuanlong Ma}
\ead[3]{xuanlma@xsyu.edu.cn}
\author[4]{Natalia V. Maslova}
\ead[4]{butterson@mail.ru}

\address[1]{The Institute of Mathematical Sciences
    (HBNI), Chennai, India and Chennai Mathematical Institute,
    Siruseri, India}
\address[2]{School of Mathematics and Statistics, University of St Andrews,
    St Andrews, Fife KY16 9SS, UK}
\address[3]{School of Science, Xi'an Shiyou University, Xi'an 710065,
    P.R. China}
\address[4]{Krasovskii Institute of Mathematics and Mechanics UB RAS,
    Yekaterinburg, 620077, Russia, and Ural Federal University, Yekaterinburg,
    620002, Russia}

\begin{abstract}
%  The commuting graph of a group $G$ is the simple graph whose
%  vertices are the elements of $G$, two distinct vertices joined if
%  they commute.

Our purpose in this paper is twofold.
\begin{enumerate}
\item We discuss the computational problem of deciding whether a given graph
is the commuting graph of a finite group; we give a quasipolynomial algorithm,
and a polynomial algorithm for the case when the group is an extra\-special
$p$-group for $p$ an odd prime.
\item We give new results on the question of whether the commuting graph of
a given group is a cograph or a chordal graph, two classes of graphs defined
by forbidden subgraphs.
\end{enumerate}
The problems are not unrelated, since there are a number of cases where hard
computational problems on graphs are easier when restricted to special
classes of graphs; we conjecture that the recognition problem is polynomial
for cographs and chordal graphs.

\end{abstract}

\begin{keyword}
finite group \sep commuting graph \sep forbidden subgraph \sep cograph \sep
chordal graph \sep graph isomorphism \sep quasipolynomial algorithm \sep
extraspecial group
\MSC[2020] 05C25 \sep 05C85 \sep 20D60
\end{keyword}

\end{frontmatter}

\section{Introduction}

Graphs associated with some algebraic structures, such as the power graph of a group or semigroup, have been actively investigated (cf. \cite{AKC,Cam10,CGh, CaMas}) Of course the famous Cayley graphs have a long history. Moreover, graphs from algebraic structures
have valuable applications (cf. \cite{Kelbook}): for example, Cayley graphs as classifiers for data mining \cite{KeR}.

Let $G$ be a group and $S$ be a subset of $G$. The {\em commuting
  graph} of $G$ on $S$ is a simple graph with vertex set $S$, and two
distinct elements are adjacent in this graph if they commute in
$G$. In particular, if $S=G$, then we denote this commuting graph on
$G$ by $\Gamma(G)$. In 1955, Brauer and Fowler \cite{BF55} first
introduced the commuting graph of a group and showed that there are
only finitely many simple groups with even order such that they have a
prescribed involution centralizer. In fact, in the seminal paper
\cite{BF55}, they considered the induced subgraph of the commuting
graph on the set of all non-trivial elements of the group.  Since
then, the commuting graphs have received considerable attention and
have been well studied in the literature.  For example, Segev {\em et
  al.}~\cite{Com4,Com5,Com6} established some long\-standing
conjectures on division algebras by using combinatorial
parameters of commuting graphs.  Britnell and Gill \cite{Br17}
classified all finite quasi-simple groups $G$ whose commuting graph
$\Gamma(G)$ is perfect, where $\Gamma(G)$ is defined on the vertex set
$G\setminus Z(G)$, where $Z(G)$ is the centre of $G$. Giudici and
Parker \cite{GiP} showed that for every positive integer $d$, there is
a finite $2$-group $G$ such that the commuting graph of $G$ on the set
of all non-central elements has diameter greater than $d$; further
results are found in \cite{Cu22}.  Morgan and Parker \cite{MoP} proved
that, for a finite group $G$ with trivial centre, every connected
component of the commuting graph of $G$ on the set of all non-trivial
elements has diameter at most $10$.  Recently, Beike {\em et
  al.}~\cite{Bei} extended some of their results; some further
applications of commuting graphs in group theory can be found in
\cite{GGsurvey}.

The first part of this paper is devoted to the following algorithmic
question.

\begin{ques}\label{ques0}
  Given a simple undirected graph $\Gamma =(V,E)$, with vertex set $V$
  and edge set $E$, is it isomorphic to the commuting graph
  $\Gamma(G)$ of a group $G$? More precisely, is there a bijection
  $\varphi:V\to G$ such that a vertex pair $\{u,v\}$ is an edge in
  $\Gamma$ precisely when the groups elements
  $\varphi(u),\varphi(v)\in G$ commute.
\end{ques}

We give a quasi-polynomial algorithm to answer this question in
general, and a polynomial-time algorithm in the case of extraspecial
$p$-groups with $p$ odd.

A number of important graph classes, including perfect graphs,
cographs, chordal graphs, split graphs, and threshold graphs, can be
defined either structurally or in terms of forbidden induced
subgraphs. Forbidden subgraphs of power graphs were studied in
\cite{MCam}.  In \cite{GGsurvey}, the second author proposed the following
question.

% described various graphs that can be associated with a given group
% $G$. The vertex set of such a graph is the set of all elements of
% $G$, and its edge set reflects the group structure of $G$ in some
% way. He

\begin{ques}\label{ques1}{\rm (\cite[Question 14]{GGsurvey})}
For which finite groups is the commuting graph a perfect graph, or a
cograph, or a chordal graph, or a split graph, or a threshold graph?
\end{ques}

Britnell and Gill \cite{Br17} studied the finite groups whose commuting graph is perfect.  The second and third authors~\cite{MaCam} completely classified the finite groups whose commuting graph is a split graph or a threshold graph.

In Part II of the paper (Sections \ref{ten-sec}, \ref{Structure},
\ref{Simple}, and \ref{appendix}) , we address the question of when
the commuting graph is a cograph or a chordal graph. We prove that in
some sense, the class of such groups generalizes the class of finite
solvable groups (see Theorem~\ref{CommonStructure_Cograph} in
Subsection~\ref{General}). Also we determine all finite simple groups
whose commuting graph is a cograph (see Problem~\ref{Classify} in
Subsection~\ref{General} and Theorem~\ref{Simple-cog} at the end of
Section~\ref{Simple}). The results partly answer Question~\ref{ques1}.

These questions are relevant to the recognition problem
(Question~\ref{ques0}). There are extra hypotheses which could make
the problem of recognition of commuting graphs easier and yield faster
algorithms. One is to restrict the groups to a known class, such as
nilpotent groups. Another is to restrict the graphs to a class in
which computation is easier. For example, the result of the second and
third authors \cite{MaCam} describes precisely the split or threshold
graphs which are commuting graphs; these are the complete graphs,
together with the graphs consisting of an $n$-vertex complete graph
(where $n$ is odd), with $n$ pendant edges added at one of its
vertices. Clearly these are easily recognised.

%We have not pursued this issue further.

We note that some works, especially on connectedness and diameter,
define the commuting graph to have vertex set $G\setminus Z(G)$ (that
is, omitting dominating vertices). This has little effect on our
results. For the recognition problem, if we are given the commuting
graph on $G\setminus Z(G)$, then the commuting graph on $G$ is found
by adding $|Z(G)|$ dominating vertices to the graph, and this number
is a divisor of $|G\setminus Z(G)|$ so there are not many
possibilities. In the second part of the paper too, as mentioned
before Proposition~\ref{cog-subg}, we can define the commuting graph
without including the centre $Z(G)$ and the results essentially remain
the same.

We conclude the introduction with a suggestion for further research,
which would link the two parts of the paper together more closely.

\begin{ques}
Suppose that $\Gamma$ is a perfect graph, a cograph or a chordal graph. Is
there a polynomial algorithm to decide whether $\Gamma$ is the commuting graph
of a group?
\end{ques}

\section{Preliminaries}\label{prelim-sec}
In this section, we will introduce some notation and recall some
definitions on groups and graphs.

Every group considered is finite. Throughout, $G$ always denotes a
finite group and $1_G$ denotes the identity element of $G$, and we
denote $1_G$ simply by $1$ if the considered group is unambiguous.
The {\em order} of the group $G$ is its cardinality, denoted
$|G|$. For an element $g\in G$, the {\em order} $o(g)$ of $g$ in $G$
is the size of the cyclic subgroup $\langle g\rangle$ generated by
$g$. The element $g$ is called an {\em involution} if $o(g)=2$. The
centre of $G$, denoted by $Z(G)$, is defined as the subgroup
\[
\{x\in G: gx=xg \hbox{ for each $g\in G$}\}.
\]
The {\em centralizer} $C_G(g)$ of $g$ in $G$ is the subgroup of
all elements that commute with $g$, that is,
\[
C_G(g)=\{x\in G: gx=xg\}.
\]
The {\em commutator} of two elements $x,y$ is denoted as $[x,y]$, that
is, $[x,y]=x^{-1}y^{-1}xy$. Note that $x$ and $y$ commute if and only
if $[x,y]=1$. The \emph{derived group}, or \emph{commutator subgroup}, $G'$
of $G$ is the subgroup generated by all commutators. The \emph{derived series}
is the series of subgroups $G^{(n)}$ of $G$ defined by $G^{0}=G$ and
$G^{(n+1)}=(G^{(n)})'$. The group $G$ is \emph{solvable} if $G^{(n)}=1$ for
some $n$.

Denote by $\mathbb{Z}_n$ the cyclic group with order
$n$.  For a prime $p$, a \emph{$p$-group} is a group whose order is a
power of $p$.  An {\em elementary abelian $p$-group} is a $p$-group
that is a direct product of cyclic groups of order $p$. A {\em Sylow
  $p$-subgroup} of $G$ is a $p$-subgroup whose index in $G$ is coprime
to $p$.

An \emph{automorphism} of $G$ is a permutation $\alpha$ of $G$ preserving
the group operation: $(gh)^\alpha=g^\alpha h^\alpha$. An \emph{inner
automorphism} is a map $\phi(g):x\mapsto g^{-1}xg$. It is readily checked
that the automorphisms of $G$ form a group $\Aut(G)$, while the
inner automorphisms form a subgroup $\Inn(G)$ of $\Aut(G)$.

A subgroup $N$ of $G$ is a \emph{normal subgroup} of $G$, denoted
$N\lhd G$, if $gN=Ng$ for all $g\in G$. The \emph{quotient} $G/N$ is
the group whose elements are the cosets of $N$ in $G$ with the (well-defined)
group operation $(Ng_1)(Ng_2)=N(g_1g_2)$. For example,
\begin{itemize}
\item the centre $Z(G)$ is a normal subgroup of $G$, and $G/Z(G)$ is isomorphic
to $\Inn(G)$;
\item the inner automorphism group $\Inn(G)$ is a normal subgroup of the
automorphism group $\Aut(G)$; the quotient is the \emph{outer automorphism
group} $\Out(G)$ of $G$;
\item the derived group $G'$ is a normal subgroup of $G$; it is the smallest
normal subgroup $N$ for which $G/N$ is abelian.
\end{itemize}

A subgroup of $G$ is \emph{characteristic} if it is mapped to itself by all
automorphisms of $G$. Since a subgroup is normal if it is mapped to itself
by all inner automorphisms, we see that each characteristic subgroup is normal.

A group $G$ is \emph{simple} if its only normal subgroups are $G$ and $\{1\}$.
In this paper, we will use the \emph{Classification of the Finite Simple Groups}
(CFSG), which will be explained when it is used in
Section~\ref{cfsg-sec}. Here we note one
consequence of CFSG, the proof of the \emph{Schreier conjecture}:

\begin{theorem}\label{schreier}
If $G$ is a non-abelian finite simple group, then $\Out(G)$ is solvable.
\end{theorem}

Each minimal (non-trivial) normal subgroup of a finite group is a direct
product of isomorphic simple groups~\cite[Theorem 4.3A(iii)]{DM}.
The \emph{socle} of $G$, denoted $\Soc(G)$, is the product of the minimal
normal subgroups of $G$; it is a direct product of simple groups (not
necessarily isomorphic). If the socle is simple, we say that $G$ is
\emph{almost simple}.

A {\em normal series} for
a group $G$ is a collection of normal subgroups $N_i\lhd G$ with $0\le i\le
r$ such that
\[
\{1\}=N_0\subset N_1\subset N_2\subset\cdots \subset N_r=G.
\]
This series is a {\em central series} for $G$ if
$N_i/N_{i-1}\subseteq Z(G/N_{i-1})$ for $1\le i\le r$. A group $G$ is
\emph{nilpotent} if it has a central series. If $G$ is nilpotent, and we
build the \emph{upper central series} by taking
$N_i/N_{i-1}=Z(G/N_{i-1})$, the number of steps in the series is the
\emph{nilpotency class} of $G$.

\begin{theorem}\rm{\cite[Theorem 1.26]{isaacs}}
If $G$ is a finite group, the following conditions are equivalent:
\begin{enumerate}
\item $G$ is nilpotent.
\item Every Sylow subgroup of $G$ is normal in $G$.
\item $G$ is the direct product of its Sylow subgroups.
\end{enumerate}
\end{theorem}

A \emph{central product} $H\circ K$ of two groups $H$ and $K$ with respect to an
isomorphism $\theta$ from a central subgroup $Z$ of $H$ to a central subgroup of
$K$ is defined to be $$(H\times K)/\{(z,z\theta):z\in Z\}.$$

By $F(G)$ we will denote the {\it Fitting subgroup} of $G$, i.e.\ the largest normal nilpotent subgroup of $G$.

A \emph{component} of $G$ is a subgroup $H$ of $G$ which is \emph{quasisimple}
(that is, $H=H'$ and $H/Z(H)$ is simple) and subnormal in $G$ (that is, a term
in some normal series of $G$). Let $\Comp(G)$ be the set of components of $G$.
Then the subgroup $E(G)$ is the subgroup generated by $\Comp(G)$, and the
\emph{generalized Fitting subgroup} $F^*(G)$ of $G$ is the central product
\begin{equation}\label{F(G)E(G)} F^*(G)=F(G) \circ E(G), \end{equation}
see \cite[31.12]{Asch86}.

Given a positive integer $n$, we use $D_{2n}$ to denote the {\em
  dihedral group} with order $2n$. A presentation of $D_{2n}$ is given
by
\begin{equation}\label{d2n}
D_{2n}=\langle a,b: a^n=b^2=1, bab=a^{-1}\rangle.
\end{equation}
Note that $D_{2n}$ is an abelian group if and only if $n\in \{1,2\}$, and for all $1\le i \le n$,
\begin{equation}\label{d2n-1}
o(a^ib)=2,~~
D_{2n}=\langle a\rangle \cup \{b,ab,a^2b,\ldots,a^{n-1}b\}.
\end{equation}

Suppose that $A$ is an abelian group. The {\em generalized dihedral
  group} corresponding to $A$, denoted by $D(A)$, is the external
semidirect product $A\rtimes T$ of $A$ with the cyclic group
$T=\langle t \rangle$ of order two, with the non-identity element $t$
acting as the inverse map on $A$, that is,
\[
x^t=x^{-1} \textrm{ for all } x\in A.
\]
Viewing this as an internal semidirect
product, $A$ is an abelian normal subgroup of $D(A)$ of index two. A
presentation for $D(A)$ is as follows:
\begin{equation}\label{gdg}
D(A)=\langle A,t: o(t)=2, tgt=g^{-1}\hbox{ for each $g\in A$}\rangle.
\end{equation}

\begin{obs}\label{gdg-ob}
Let $D(A)$ be the generalized dihedral group as presented
in \eqref{gdg}. Then we have the following.
\begin{itemize}
  \item[{\rm (i)}] Every element of $D(A)\setminus A$ is an
    involution;
  \item[{\rm (ii)}] $D(A)$ is abelian if and only if $A$ is an
    elementary abelian $2$-group, if and only if $D(A)$ is an
    elementary abelian $2$-group;
  \item[{\rm (iii)}] If $A$ is cyclic of order $n$, then $D(A)\cong
    D(\mathbb{Z}_n)\cong D_{2n}$.
\end{itemize}
\end{obs}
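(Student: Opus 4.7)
The plan is to verify the three items directly from the defining relation $xgx = g^{-1}$ in \eqref{gdg}, first noting that since $x^2 = e$ this relation is equivalent to $xg = g^{-1}x$ for every $g \in A$.

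For part (i), I would observe that $A$ has index two in $D(A)$, so every element of $D(A) \setminus A$ has the unique form $gx$ for some $g \in A$. A one-line calculation then gives
\[
(gx)^2 \;=\; g(xg)x \;=\; g(g^{-1}x)x \;=\; x^2 \;=\; e,
\]
and since $gx \neq e$ this shows $o(gx) = 2$.

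For part (ii), I would establish the equivalence cyclically. If $A$ is elementary abelian of exponent $2$, then $g^{-1} = g$ for every $g \in A$, so the twist relation collapses to $xg = gx$; since $A$ itself is abelian, it follows that $D(A)$ is abelian, and then part (i) together with the fact that $A$ has exponent $2$ forces every element of $D(A)$ to square to $e$, making $D(A)$ elementary abelian of exponent $2$. The implication that $D(A)$ elementary abelian forces $A$ elementary abelian is immediate since $A \le D(A)$. Finally, if $D(A)$ is abelian, then $x$ centralizes $A$, which combined with $xgx = g^{-1}$ yields $g = g^{-1}$ for all $g \in A$, so $A$ has exponent dividing $2$.

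For part (iii), I would simply match presentations: writing $A = \langle a\rangle$ with $o(a) = n$ and setting $b := x$, the relations of $D(A)$ become $a^n = b^2 = e$ and $bab = a^{-1}$, which are exactly the relations \eqref{d2n} defining $D_{2n}$. Since $|D(A)| = 2n = |D_{2n}|$, the induced surjection $D_{2n} \twoheadrightarrow D(\mathbb{Z}_n)$ is an isomorphism. None of the three items is genuinely difficult; the only place one must be slightly careful is in (i), where one should rewrite $xg$ as $g^{-1}x$ before squaring, and in (iii), where one should verify the order equality before promoting the presentation map to an isomorphism rather than merely a quotient map.
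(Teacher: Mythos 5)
Your proof is correct. The paper states this result as an Observation without supplying a proof, so there is no argument of theirs to compare against; your verification — rewriting $xg = g^{-1}x$ and squaring for (i), cycling through the three conditions using the defining relation together with (i) for (ii), and matching the presentation and then counting for (iii) — is exactly the routine check the authors leave to the reader, and each step is sound.

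One minor stylistic remark: in (ii) you could shorten the argument that an abelian $D(A)$ of exponent $2$ is elementary abelian by just citing the classification of finite abelian groups, but your route via (i) is equally valid. In (iii) you correctly flag the one genuine subtlety, namely that the relation-matching alone only gives a surjection $D_{2n}\twoheadrightarrow D(A)$, and you close it with the order count $|D_{2n}|=2n=2|A|=|D(A)|$; this is precisely the care that is needed and is easy to overlook.
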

\begin{proof}
  As $A$ is a normal subgroup of $D(A)$ of index two, every element
  $g\in D(A)\setminus A$ is in the coset $At$ and is of the form
  $g=at$. Hence, $g^2=atat=aa^t=aa^{-1}=1$ which means $g$ is an
  involution. Furthermore, $D(A)$ is commutative if and only if for
  all $a\in A$, $at=ta$. That is, $tat=a^{-1}=a$ which implies $a^2=1$
  for all $a\in A$, implying that $A$ is an elementary abelian
  $2$-group which, in turn, implies that $D(A)$ is an elementary
  abelian $2$-group. The last part follows directly from the definition of
  dihedral groups. \qed
\end{proof}

Observation~\ref{gdg-ob}(ii) means that $D(A)$ is non-abelian if and
only if $A$ has an element of order at least $3$.  Moreover,
Observation~\ref{gdg-ob}(iii) implies that the dihedral groups are
special cases of generalized dihedral groups. As an example, one can
check that $D(\mathbb{Z}_2\times \mathbb{Z}_6)\cong \mathbb{Z}_2\times
D_{12}$.

Given a positive integer $m$ at least $2$, Johnson
\cite[pp. 44--45]{Jon} introduced the {\em generalized quaternion
  group} $Q_{4m}$ of order $4m$, that is,
\begin{equation}\label{q4m}
Q_{4m}=\langle x,y: x^m=y^2, x^{2m}=1, y^{-1}xy=x^{-1}\rangle.
\end{equation}

\begin{obs}\label{obs2}
For any $1\le i \le 2m$, $o(x^iy)=4$ and $Z(Q_{4m})=\{1,x^m\}$.
\end{obs}

\begin{proof}
  By definition, $x^{2m}=y^4=1$, which implies that $o(y)=4$, as there
  are no other relations. Furthermore, as $xy=yx^{-1}$ we can write
  $x^iy=yx^{-i}$ for any positive integer $i$. It follows that
  $x^iyx^iy=yx^{-i}x^iy=y^2$ for all $i$. Hence, $(x^iy)^4=y^4=1$ for
  all $i$, implying that $o(x^iy)=4$ for $1\le i\le 2m$. We can
  express every element in $Q_{4m}$ as $x^iy^j$ for $0\le i\le 2m-1$
  and $0\le j\le 3$. Now, the element $x^iy^j$ is in the centre
  $Z(Q_{4m})$ if and only if it commutes with both $y$ and $x$.
  Clearly, $yx^iy^j=x^iy^jy$ if and only if $yx^i=x^iy$ if and only if
  $yx^i=yx^{-i}$, which is equivalent to $x^{2i}=1$. This forces
  $i\in\{0,m\}$. Thus, $x^m$ is in the centre $Z(Q_{4m})$. Suppose
  $j\in\{0,2\}$. If $j\in\{0,2\}$ then, as $i\in\{0,m\}$, and
  $x^m=y^2$ by definition of the group $Q_{4m}$, it follows that
  $x^iy^j\in \{1,x^m\}$ which is contained in the centre
  $Z(Q_{4m})$. Consider the case $j\in\{1,3\}$. Then
  $x^my^jx=x^{m+1}y^j$ if and only if $y^jx=xy^j$, which is equivalent
  to $y^jx=y^jx^{-1}$ as $xy^j=y^jx^{-1}$ since $j$ is odd.  But that
  implies $x^2=1$ which is a contradiction as $o(x)=2m>2$. Hence
  $Z(Q_{4m})=\{1,x^m\}$. \qed
\end{proof}

% If $j=2$ then, by definition of the group $Q_{4m}$, we have
% $x^m=y^2$ which implies $x^iy^j\in\{1,x^m\}$

The group of all permutations of $\{1,2,\ldots,n\}$ is the \emph{symmetric
group} $S_n$. For $n\ge2$, its derived group is the \emph{alternating group}
$A_n$ consisting of all even permutations. The alternating group is simple
for $n\ge5$.

A group $G$ is called a {\em Frobenius group} if there is a proper
subgroup $H$ of $G$ such that $H \cap H^g=1$ whenever
$g \in G\setminus H$. Let
$$ K=\{1_G\} \cup (G \setminus ( \cup_{g \in G} H^g))$$ be the {\em
  Frobenius kernel} of $G$. It is well-known (see, for
example,~\cite[35.24 and~35.25]{Asch86}) that $K \trianglelefteq G$,
$G=K \rtimes H$, $C_G(h)\le H$ for each $h \in H$, and $C_G(k)\le K$
for each $k \in K\setminus \{1\}$.  Moreover, by the Thompson theorem on finite
groups with fixed-point-free automorphisms of prime
order~\cite[Theorem~1]{Thompson}, $K$ is nilpotent.  Frobenius groups
are also characterised as transitive non-regular permutation groups in
which non-identity elements fix at most one point.

\medskip

The \emph{Frattini subgroup} $\Phi(G)$ of a finite group $G$ is the
intersection of all maximal proper subgroups of $G$. A $p$-group $G$
is an \emph{extraspecial} group $G$ if its centre $Z(G)$ is of order $p$
and coincides with its derived subgroup $G'$ and the Frattini subgroup
$\Phi(G)$ \cite[Chapter 4, pp.\ 123]{isaacs}. Furthermore, there are
exactly two non-isomorphic extraspecial groups of order $p^{2r+1}$,
for each $r\ge 1$ and each prime $p$.

Every graph considered in this paper is simple: that is, it is
undirected, with no loops and no multiple edges.  Given a graph
$\Gamma$, the vertex set and edge set of $\Gamma$ are denoted by
$V(\Gamma)$ and $E(\Gamma)$, respectively. We often write $\Gamma$ as
a pair $\Gamma=(V,E)$ where $V=V(\Gamma)$ and $E=E(\Gamma)$.  For a
vertex subset $U\subset V(\Gamma)$ the {\em induced subgraph}
$\Gamma[U]$ has vertex set $U$ and edge set
$\{\{u,v\}\in E(\Gamma)\mid u,v\in U\}$. In other words, the subgraph
\emph{induced} by $U$ has vertex set $U$ and its edge set consists of
all edges of $\Gamma$ between vertices in $U$.

Let $\mathcal{F}$ be a collection of finite graphs. If no induced
subgraph of $\Gamma$ is isomorphic to a graph in $\mathcal{F}$, then
$\Gamma$ is called {\em $\mathcal{F}$-free}; and if
$\mathcal{F}=\{\Delta\}$, for some graph $\Delta$, we say that
$\Gamma$ is \emph{$\Delta$-free}. Sometimes we say instead that
$\Gamma$ forbids $\mathcal{F}$ or $\Delta$.  Denote by $P_n$ and $C_n$
the path of length $n-1$ and the cycle of length $n$, respectively.
For distinct $x,y\in V(\Gamma)$, if $x$ is adjacent to $y$ in
$\Gamma$, then we write $x \sim_{\Gamma} y$ to denote this edge
$\{x,y\}$, or just by $x \sim y$ when $\Gamma$ is clear from the
context. In particular, if $P_n$ is the subgraph of $\Gamma$ induced
by the vertex subset $\{x_1,x_2,\ldots,x_n\}$ with the only
adjacencies being $x_i\sim x_{i+1}$, for each $1\le i\le n-1$, we denote
the induced path $P_n$ by
\[
x_1\sim x_2\sim\cdots \sim x_n.
\]

Similarly, in $\Gamma$, if the $n$-cycle
$C_n$ is the subgraph induced by the subset of vertices
$\{x_1,x_2,\ldots,x_n\}$, where the only edges between them are
$\{x_1,x_{n}\}\in E(\Gamma)$ and $\{x_i,x_{i+1}\}\in
E(\Gamma)$ for all $1\le i \le n-1$, we denote it by
\[
x_1\sim x_2\sim\cdots \sim x_n\sim x_1.
\]

The \emph{neighbourhood} of a vertex $v$ is $N(v)=\{w\in V(\Gamma):w\sim v\}$,
and the \emph{closed neighbourhood} of $v$ is $\bar N(v)=\{v\}\cup N(v)$.
A vertex $v$ of a graph $\Gamma$ is said to be \emph{dominating} if
$v\sim w$ for all $w\in V(\Gamma)\setminus\{v\}$.

\part{Recognising the commuting graph}

\section{The Recognition Problem}

The main focus of this part is the \emph{recognition problem} of
commuting graphs. It is basically an algorithmic question. Given an
undirected graph $X=(V,E)$ as input, the aim is to design an algorithm
that efficiently checks if there is a group $G$ with $|V|$ elements
such that the input graph $X$ is isomorphic to $\Gamma(G)$, and if so
to efficiently determine such an isomorphism.

We summarize our results.

\begin{itemize}
\item We obtain a deterministic polynomial-time algorithm for the case
  of extraspecial $p$-groups for odd primes $p$.  Extraspecial
  $p$-groups are a special case of $p$-groups of nilpotence class $2$.
  A formal definition is in Section~\ref{extrasp-sec}.
\item For the general case, we obtain a quasipolynomial-time algorithm
  for the recognition problem of commuting graphs. The algorithm is
  essentially based on a simple $2^{O(\log^3n)}$ time algorithm that
  enumerates the multiplication tables of all distinct groups of order
  $n$ (where by distinct we mean mutually non-isomorphic groups of
  order $n$). This enumeration algorithm combined with Babai's
  quasipolynomial time algorithm for graph isomorphism \cite{B16}
  yields a simple $2^{O(\log^3n)}$ time algorithm to check if a given
  $n$-vertex graph $X$ is the commuting graph of a group of order~$n$:
  for each $n$-element group $G$ that is output by the enumeration
  algorithm we check if $\Gamma(G)$ is isomorphic to $X$ by using
  Babai's isomorphism test \cite{B16}.
\item A natural question in connection with our algorithm for recognizing
the commuting graphs of extraspecial groups is whether groups with the
same commuting graphs are isoclinic. This holds for extraspecial
groups (this is exploited by the algorithm), and it is natural to
conjecture that this property holds for all groups of nilpotence class
2 (of which extraspecial groups are a subclass). We present
counter-examples for class-3 nilpotent groups and conjecture that the
property holds for class-2 nilpotent groups.
\item Additionally, we also show an efficient reduction of the
  commuting graph recognition problem to the recognition problem of
  commuting graphs of indecomposable groups. We also note recognizing
  the commuting graph of generalized dihedral groups along with an extension
  to Frobenius groups.
\end{itemize}

\noindent\textbf{Some related work.}~ There is a result by Giudici and
Kuzma \cite{GK16} that shows the following: every $n$-vertex graph $X$
with at least two vertices of degree $n-1$ is realizable as the
commuting graph of a semigroup. It is easy to see that their
construction actually gives a polynomial-time algorithm for finding a
semigroup with $n$ elements and a bijection from it to the vertex set
of $X$ such that the edges of $X$ realize the commuting relation of
the semigroup. It is a nearly complete answer to the question in the
semigroups setting.

 Solomon and Woldar \cite{SW13} have shown that the commuting graph
 $\Gamma(G)$ of a finite simple group $G$ uniquely determines
 the group. More precisely, they have shown that for any finite group
 $H$ and any finite simple group $G$, their commuting graphs
 $\Gamma(H)$ and $\Gamma(G)$ are isomorphic if and only if the groups
 $G$ and $H$ are isomorphic. An interesting question is whether or not
 the proof of this result \cite{SW13} yields a polynomial-time
 algorithm for checking if a given graph $X$ is the commuting graph of
 a simple group.

\section{Basic properties}\label{basic-sec}

We being with some preliminary observations that are well-known in the
literature (see, e.g., the survey \cite{GGsurvey}).

Let $G$ be a finite group. We first describe the cliques of
$\Gamma(G)$. If a vertex subset $S$ induces a clique in $\Gamma(G)$
then $S$ is a commuting subset of elements of $G$. Conversely, every
commuting subset of elements of $G$ forms a clique in
$\Gamma(G)$. Which cliques of $\Gamma(G)$ correspond to subgroups of
$G$? Although we cannot directly infer the group multiplication from
$\Gamma(G)$, we can observe the following.

\begin{lemma}\label{maxclique-lem}
  Let $G$ be a finite group and $\Gamma(G)$ its commuting graph. A
  vertex subset $S$ is a \emph{maximal} clique of $\Gamma(G)$ if and only if $S$
  is a maximal abelian subgroup of $G$.
\end{lemma}

\begin{proof}
  Suppose $H$ is a maximal abelian subgroup of $G$. Clearly, $H$ is a
  clique in $X$. If $x\notin H$ is adjacent to all of $H$ then $x$
  commutes with each element of $H$ implying that $\angle{H\cup\{x\}}$
  is an abelian subgroup of $G$ strictly larger than $H$. Hence the
  clique induced by $H$ is maximal.

  Conversely, suppose $S\subset G$ is a maximal clique in $\Gamma(G)$.
  Then any two elements $x,y\in S$ commute with each other. Hence, the
  subgroup $H=\angle{S}$ generated by $S$ is an abelian subgroup of
  $G$ which means that the vertex subset consisting of elements of $H$
  is a clique in $\Gamma(G)$ containing $S$. As $S$ is maximal, by
  assumption, it follows that $S=H$ which means $S$ is a maximal
  abelian subgroup. \qed
\end{proof}

\subsection{The vertex degrees of $\Gamma(G)$ and conjugacy classes of $G$}

Let $X=(G,E)$ be the commuting graph of a finite group $G$.  Let $x^G$
denote the \emph{conjugacy class} of $x\in G$:
\[
x^G = \{g^{-1}xg\mid g\in G\},
\]
which is the orbit of $x$ under the conjugation action of $G$ on
itself. Let $\deg(v)$ denote the degree of a vertex $v$ in the graph
$X$.
The orbit-stabilizer lemma \cite{permgps} directly implies the following.

\begin{prop}
  For each $x\in G$ its centralizer $C_G(x)$ is the closed neighborhood
  $\bar{N}(x)$ of $x$ in $\Gamma(G)$, and
  \[
|C_G(x)| = 1+ \deg(x) = {\frac{|G|}{|x^G|}}, \textrm{ for all }x\in G.
  \]
\end{prop}

Let $n=|G|$, let $m$ denote the number of edges in the commuting graph $\Gamma(G)$,
and let $k$ denote the number of conjugacy classes.  As $\sum_{x\in
  G}\deg(x) = 2m$, we have:

\begin{equation}
2m +n = \sum_{x\in G} {\frac{|G|}{|x^G|}} = |G|\cdot \sum_{x\in
  G}{\frac{1}{|x^G|}} = n\cdot k.
\label{e:edges}
\end{equation}

Thus the number of conjugacy classes of $G$ is $k=(2m+n)/n$, which, by
the above equation, can be inferred from the commuting graph. Thus,
for example, the only regular commuting graphs are the complete
graphs, which are the commuting graphs of abelian groups.

Let $f(n)$ denote the minimum number of conjugacy classes in a group
of order~$n$. The function $f(n)$ is quite well
studied. Landau~\cite{La} showed that $f(n)\to\infty$ as
$n\to\infty$. The first lower bound was obtained by
Brauer~\cite{Brauer} and Erd\H{o}s and Tur\'an~\cite{ET}, who showed
that $f(n)\ge\log\log n$ (logarithms to base~$2$). This was improved
to $\epsilon\log n/(\log\log n)^8$ by Pyber~\cite{Py}. The exponent
$8$ in the previous expression was reduced to $7$ by Keller
\cite{Keller}, and then to $3+\epsilon$ by Baumeister,
Mar\'oti and Tong-Viet~\cite{BMT-V}, for any $\epsilon>0$. It is
conjectured that a bound of the form $f(n)\ge C\log n$ holds for some
constant~$C$.

This is relevant to the recognition problem because of the following
observation, which follows immediately from equation \eqref{e:edges}.

\begin{prop}
  An $n$-vertex graph $X$ with fewer than $\frac{n(f(n)-1)}{2}$ edges
  cannot be the commuting graph of a group, where $f(n)$ is the
  minimum number of conjugacy classes that a group of order~$n$ can have.
\end{prop}

At the other extreme, we can rule out very dense incomplete graphs by
the $5/8$-theorem \cite{Gu73} for finite groups. More precisely, for
any finite non-abelian group $G$ with $n$ elements:
\[
|C|=|\{(x,y)\mid x,y\in G \textrm{ and } xy=yx\}|\le \frac{5}{8}\cdot n^2,
\]
where $C\subset G\times G$ denotes the set of ordered pairs $(x,y)$ of
commuting elements. Then, in the commuting graph $\Gamma(G)=(G,E)$
the number of edges
\[
|E|=\frac{|C|-n}{2}\le \frac{5}{16}\cdot n^2 -\frac{n}{2}\le \frac{5}{8}\cdot{n\choose 2}.
\]
Putting it together, we have:
\begin{prop}
  An incomplete graph $X=(V,E)$ cannot be the commuting graph of a group
  if $|E|>\frac{5}{8}\cdot {|V|\choose 2}$.
\end{prop}

\subsection{The commuting graph and maximal abelian subgroups}

For a finite group $G$, let $2^G$ denote its power set and let
$\mathcal{M}\subseteq 2^G$ denote the set of all \emph{maximal abelian
  subgroups} of $G$. Associated with $G$ is the natural
\emph{hypergraph} $(G,\mathcal{M})$, where the hyperedges are
precisely the maximal abelian subgroups of $G$.

\begin{prop}\label{max-abel-subgps}
  The commuting graph of a finite group determines the hypergraph of
  its maximal abelian subgroups, and, conversely the maximal abelian
  subgroups hypergraph of the group determines its commuting graph.
\end{prop}

\begin{proof}
  Let $G$ be a finite group. The maximal abelian subgroups are the maximal
  cliques in $\Gamma(G)$; and the edges of $\Gamma(G)$ are the $2$-element
  subsets of maximal abelian subgroups.\qed
\end{proof}

\begin{remark}
Since the commuting graph $\Gamma(G)$ of a finite simple group $G$
uniquely determines the group \cite{SW13}, by Proposition~\ref{max-abel-subgps}
it follows that the set of maximal abelian groups of a finite simple
group $G$ uniquely determines $G$.

For a finite group $G$, the number of maximal abelian subgroups is
bounded by ${|G|}\choose {\log|G|}$ because every subgroup of $G$ has
a generating set of size bounded by $\log|G|$. This simple bound is tight apart from a constant in the exponent. To
see this, consider the extraspecial group $G$ of order $p^{2n+1}$ and
exponent $p$, where $p$ is an odd prime. The centre has order $p$, and
$G/Z(G)$ is isomorphic to a $2n$-dimensional vector space $V$ over the
field $F$ of $p$ elements, with the bilinear form from $V\times V$ to
$F$ corresponding to the commutation map from $G/Z(G)\times G/Z(G)$ to
$Z(G)$. Maximal abelian subgroups contain the centre, and correspond
to maximal totally isotropic subspaces of $V$. It is known that the
number of such subspaces is $\prod_{i=1}^n(p^i+1)$ (see
\cite{Tay}). As
\[
  \prod_{i=1}^n(p^i+1) > p^{n(n+1)/2}>p^{n(2n+1)/4}=|G|^{n/4},
\]
it follows that the number of maximal abelian subgroups of the
extraspecial $p$-group $G$ is $|G|^{\Omega(\log |G|)}$ for any odd
prime $p$.
\end{remark}

\section{Commuting graphs of product groups}\label{prod-sec}

A natural approach to the problem of recognizing commuting graphs is
to examine the connection between product decompositions of the
commuting graph $\Gamma(G)$ and the direct product decomposition of
the group $G$.

Let $G$ and $H$ be finite groups. Consider the commuting graph
$\Gamma(G\times H)$ of their direct product. We recall the
definition of strong products of graphs.

\begin{defn}{\rm\cite[Chapter 4, Page 35]{IK08book}}
Let $X=(V,E)$ and $X'=(V',E')$ be simple undirected graphs. Their
\emph{strong product}, denoted $X\boxtimes X'$, is a simple undirected
graph with the cartesian product $V\times V'$ as its vertex set and
edges defined as follows: distinct pairs $(u,u')$ and $(v,v')$ are
adjacent if and only if one of the following holds:
\begin{itemize}
\item $u=v$ and $(u',v')\in E'$,
\item $u'=v'$ and $(u,v)\in E$,
\item $(u,v)\in E$ and $(u',v')\in E'$.
\end{itemize}
\end{defn}

The following proposition is immediate from the definition.

\begin{prop}\label{cgraph-strong}
  For finite groups $G$ and $H$
  \[
  \Gamma(G\times H) = \Gamma(G)\boxtimes \Gamma(H).
  \]
\end{prop}

The strong product is both \emph{commutative} and \emph{associative}
\cite[Chapter 4]{IK08book}, in the sense that $X\boxtimes Y\cong
Y\boxtimes X$ and $(X\boxtimes Y)\boxtimes Z\cong X\boxtimes
(Y\boxtimes Z)$.

By a \emph{non-trivial graph} we mean a graph with more than one
vertex. Given a graph $X$, a \emph{factorization} $X\cong Y\boxtimes
Z$ is \emph{non-trivial} if both $Y$ and $Z$ are non-trivial. A graph
$X$ is said to be a \emph{prime graph} if it does not have a
non-trivial factorization. (We warn the reader that this term is also used
for graphs defined by arithmetic parameters of a finite group; see for
example~\cite{Mas_pg}.)

It turns out that any connected graph $X$ has a unique factorization
(up to isomorphism) into a strong product of prime graphs \cite[Theorem
  7.14]{IK08book}. Moreover, this prime factorization can be
computed in polynomial time \cite{FeSch92}. As a consequence, we
obtain the following polynomial-time reduction.

Recall a group $G$ is said to be \emph{indecomposable} if it
cannot be expressed as the direct product of two non-trivial groups.

\begin{theorem}\label{product-thm}
  The problem of recognizing the commuting graphs of groups is
  polynomial-time reducible to the problem of recognizing the
  commuting graphs of indecomposable groups.
\end{theorem}

\begin{proof}
  Suppose we have an algorithm $\mathcal{A}$ for recognizing the
  commuting graphs of indecomposable groups. Using $\mathcal{A}$ as
  subroutine, we present a polynomial-time algorithm for recognizing
  the commuting graphs of all finite groups.

  Let $X=(V,E)$ be an undirected graph on $n$ vertices which is a
  purported commuting graph.

  First, using the polynomial-time algorithm of Feigenbaum and
  Sch\"affer \cite{FeSch92} we can factorize $X$ as
  \[
  X=X_1\boxtimes X_2\boxtimes\cdots \boxtimes X_k,
  \]
  where each $X_i$ is a prime graph on at least two vertices.  It
  follows that $k\le\log n$. Now, for each subset $S\subseteq [k]$
  of the prime graphs we combine them by taking the strong
  direct product to define the graph
  \[
  X_S =\boxtimes_{i\in S} X_i.
  \]
  Notice that any order in which the strong product of these graphs
  $X_i$ is computed yields the same graph, up to isomorphism.

  Thus, in time which is polynomial in $n$, we have computed graphs $X_S$ for
  each subset $S$ of $[k]$.  Now, we invoke the subroutine
  $\mathcal{A}$ that checks if $X_S$ is the commuting graph of an
  indecomposable group $G_S$, and if so, finds a labeling of the
  vertices of $X_S$ with elements of $G_S$ consistent with the
  commuting relations.

  We can now check if the input graph $X$ is the commuting graph of a
  group with a straightforward dynamic programming strategy based on
  the following claim which directly follows from
  Proposition~\ref{cgraph-strong}.

\begin{claim}
  For any two disjoint subsets $S,S'$ of $[k]$ such that $X_S$ and
  $X_{S'}$ are the commuting graphs of groups $G_S$ and $G_{S'}$ the
  graph $X_S\boxtimes X_{S'}$ is the commuting graph of the direct
  product group $G_S\times G_{S'}$.
\end{claim}

Now, the algorithm works in stages, computing subsets $S$ of $[k]$
along with the graph $X_S$ and group $G_S$ such that
$X_S=\Gamma(G_S)$.
\begin{enumerate}
\item \textbf{for} stages $0$ \textbf{to} $k$ \textbf{do}
\item \textbf{Stage $0$}~~ we have subsets $S$ such that $G_S$ is an
  indecomposable group. We mark all such subsets $S$ as true. We mark
  the remaining subsets as false.
\item \textbf{Stage $i+1$}~~ For each pair of disjoint subsets $S$ and
  $S'$ marked true in Stages $1,2,\ldots,i$, such that $S\cup S'$ is
  marked false, we mark $S\cup S'$ true and compute $X_{S\cup
    S'}=X_S\boxtimes X_{S'}$ and $G_{S\cup S'}=G_S\times G_{S'}$.
\item \textbf{end-for}
\item If $[k]$ is marked true then the input $X$ is the commuting
  graph of the group $G_{[k]}$ computed above.
\end{enumerate}

The above description checks if $X$ is the commuting graph of a group
with at most $2^k\le n$ calls to the subroutine $\mathcal{A}$ and the
running time of the remaining computation is clearly polynomially
bounded in $n$.\qed
\end{proof}

As mentioned in the introduction, the Solomon--Woldar theorem
\cite{SW13} shows that the commuting graph of a finite simple group
uniquely determines it. Let the group
$G=G_1\times G_2\times\cdots \times G_t$ be the direct product of
finite simple groups $G_i, 1\le i\le t$. By
Proposition~\ref{cgraph-strong} we can express its commuting graph
$\Gamma(G)$ as
\[
\Gamma(G)= \Gamma(G_1)\times \Gamma(G_2)\times\cdots \times
\Gamma(G_t)
\]
using the strong product. Putting it together, we can deduce the
following.

% We can now deduce that direct products of simple groups too have
% uniquely determined commuting graphs.

\begin{cor}
  If $G$ is a direct product of simple groups, then $G$ is uniquely
  determined by its commuting graph $\Gamma(G)$.
\end{cor}

We do not know if the commuting graph of an indecomposable group $G$
is necessarily a prime graph. However, if $G$ is simple then it turns
out, as we show below, that $\Gamma(G)$ is a prime graph.

\begin{lemma}\label{simple-prime-lem}
The commuting graph of a finite simple group is a prime graph under the
strong product.
\end{lemma}

\begin{proof}
We use the fact that, if $G$ is a non-abelian simple group and
$g\in G\setminus\{1\}$, then there exists $h\in G$ such that
$\langle g,h\rangle=G$~\cite{GK}. Now if $\langle g,h\rangle=G$, then
\begin{itemize}
\item $g$ and $h$ are nonadjacent in the commuting graph (since $G$
is non-abelian);
\item $g$ and $h$ have no non-identity common neighbour in the commuting
graph (since a common neighbour would belong to $Z(G)$, but $Z(G)=\{1\}$.
\end{itemize}

Now suppose for a contradiction that $\Gamma(G)$ is the strong product
of two non-trivial graphs with vertex sets $A$ and $B$ (i.e., with
$|A|>1$ and $|B|>1$). Then we can identify $G$, as the vertex set of
$\Gamma(G)$, with the Cartesian product $A\times B$. Suppose that
$(a,b)$ is the identity of $G$. Then $(a,b)$ is joined to all other
vertices in the commuting graph.

It follows that $a$ is joined to every other vertex in $A$, and $b$ to
every other vertex in $B$; therefore, for all $x\in A\setminus\{a\}$,
$y\in B\setminus\{b\}$, the three vertices $(a,y)$, $(x,b)$ and
$(x,y)$ are adjacent to each other in the commuting graph $\Gamma(G)$.

Choose $y\in B\setminus\{b\}$, and suppose that
$\langle(a,y),(u,v)\rangle=G$. Now
\begin{itemize}
\item if $u=a$ then $(a,y)$ and $(a,v)$ are both joined to $(x,b)$
for any $x\in A\setminus\{a\}$;
\item if $v=b$ then $(a,y)$ and $(u,b)$ are joined;
\item if neither of the above, then $(a,y)$ and $(u,v)$ are both
joined to $(u,b)$.
\end{itemize}
Each case is contradictory; so our assumption that $\Gamma(G)$ is the
strong product of two non-trivial graphs is false, and the theorem is
proved.\qed
\end{proof}

\begin{remark}
  The proof of Lemma~\ref{simple-prime-lem} only requires that
  $Z(G)=1$ and that any non-identity element is contained in a
  $2$-element generating set. These assumptions are valid in any
  almost simple group $G$ with simple normal subgroup $S$ such that
  $G/S$ is cyclic~\cite{BGH}.
\end{remark}

As a consequence of Lemma~\ref{simple-prime-lem}, we obtain the
following variant of Theorem~\ref{product-thm}.

\begin{theorem}
  The problem of recognizing the commuting graph of a direct product
  of simple groups is polynomial-time reducible to the problem of
  recognizing the commuting graph of a simple group.
\end{theorem}

The algorithm for the reduction in the above theorem is much simpler
and more efficient than in Theorem~\ref{product-thm}. For we only need to
consider the prime factors $X_i$, in the factorization
$X=\boxtimes_{i=1}^k X_i$, and not all the strong products $X_S$ for
arbitrary subsets $S\subseteq [k]$.

\section{Commuting graphs of semidirect products}\label{semi-sec}

In this section we explore whether we can recognize the commuting
graphs of semidirect products $G=H\rtimes K$ if $H$ and $K$ are both
from group classes whose commuting graphs are easily recognizable.

For example, consider the commuting graph of the dihedral group
$D_{2n}$.  Let $D_{2n}=\angle{a,b}$ where $a^n=b^2=1$ and
$bab=a^{-1}$. For $n$ odd, $1$ is the only dominating vertex, there is
an $n$-clique corresponding to $\angle{a}$ and the $ba^i$ correspond
to pendant vertices. Thus, the commuting graph of $D_{2n}$ is easily
recognizable. For $n$ even, it is a bit different with $a^{n/2}$
being the other dominating vertex.

More generally, consider the generalized dihedral group $\angle{A,t}$
defined earlier, where $A$ is abelian of order $2n$, $n$ odd. Its
commuting graph is isomorphic to that of the dihedral group $D_{2n}$
of the same order. Hence, as observed above, we can recognize that a
graph is the commuting graph of a generalized dihedral group, but
cannot decide which one.

Seeking a generalization of this example we consider Frobenius groups.

\subsection{Commuting graphs of Frobenius groups}\label{frob-sec}

In this subsection we demonstrate that we can recognise from its
commuting graph that a group $G$ is a Frobenius group. These groups
were defined earlier. Let $G$ be a Frobenius group with kernel $N$
and complement $H$.

Thompson~\cite{Thompson} proved that the Frobenius kernel is
nilpotent, and Zassenhaus~\cite{Zassenhaus} worked out the detailed
structure of a Frobenius complement.  Everything we need about
Frobenius groups is explained in standard group theory texts
~\cite{passman,isaacs,serre}.

In the commuting graph of a group $G$, the identity is a dominating
vertex (that is, joined to all others); indeed, any vertex in the centre
is dominating, so if $Z(G)$ is non-trivial then the commuting graph is
$2$-connected.

\begin{lemma}\label{lem:frob}
\begin{enumerate}
\item A Frobenius kernel has non-trivial centre.
\item A Frobenius complement has non-trivial centre.
\end{enumerate}
\end{lemma}

\begin{proof}
  (a) By Thompson's theorem \cite[Theorem 1]{Thompson}, a Frobenius
  kernel is nilpotent, and nilpotent groups have a non-trivial centre.

\medskip

(b) Let $H$ be a Frobenius complement.
By \cite[Theorem 6.3, Corollary 6.10]{isaacs}\label{isaacs1},
if $H$ has even order, then it has a unique element of order~$2$, which
lies in the centre; while, if $H$ has odd order, then it has a unique
subgroup of order $p$ for every $p$ dividing $|H|$. In the latter case,
choose $p$ to be the smallest prime divisor of $H$, and $P=\angle{a}$ the
unique subgroup of order $p$. Then $P$ is a normal subgroup of $H$. For
any $h\in H$, we have $a^h=a^m$ for some $m$ with $1\le m\le p-1$. If
$m>1$, let $q$ be a prime divisor of $m$; then some power of $h$ has order
$q$, a contradiction. So $a^h=a$ for all $h\in H$, whence $a\in Z(H)$.\qed
\end{proof}

\begin{theorem}\label{frob-thm}
  Let $G$ be a group of order $nk$, where $n>1$, $k>1$, and $n$ and $k$ are
  relatively prime. Then $G$ is a Frobenius group with Frobenius
  complement of order $k$ if and only if its commuting graph
  $\Gamma(G)$ satisfies the following conditions:
\begin{itemize}
\item[(a)] there is a unique dominating vertex $1$ in $\Gamma(G)$,
  where $1$ corresponds to the identity element of $G$.
\item[(b)] $\Gamma(G)\setminus\{1\}$ has a component of size $n-1$ and
  $n$ components of size $k-1$, and each component has a dominating
  vertex.
\end{itemize}
\end{theorem}

\begin{proof}
  Let $G$ be a group of order $nk, n>1, k>1$ with $n$ and $k$
  relatively prime.

  We prove the forward direction first. Suppose that $G$ is a
  Frobenius group with Frobenius complement $H$ of order $k$ and
  Frobenius kernel $N$. Then, as $G=N\rtimes H$, $|G|=|N|\cdot |H|$
  which implies the kernel $N$ is of order $n$.  Consider the
  commuting graph $\Gamma(G)$. As the identity element commutes with
  every other element in $G$, it is a dominating vertex in
  $\Gamma(G)$.  We recall \cite[35.24~and~35.25]{Asch86} that
  $N \trianglelefteq G$, $C_G(h)\le H$ for each $h \in H$, and
  $C_G(x)\le N$ for each $x \in N\setminus \{1\}$. Therefore,
  non-identity vertices in $N$ do not commute with non-identity
  vertices in a complement. Thus the identity element is the unique
  dominating vertex in $\Gamma(G)$.  Moreover, non-identity vertices
  in different complements do not commute with each other. Thus the
  components of $\Gamma(G)\setminus\{1\}$ are as stated, and
  Lemma~\ref{lem:frob} shows that they have dominating vertices.

Conversely, suppose that the commuting graph $\Gamma(G)$ has
properties (a) and (b). If $C$ is a component with a dominating vertex
$c$, then the centralizer $C_G(c)$ is equal to $C\cup\{1\}$, which is
thus a subgroup of $G$.  Let $N$ be the subgroup containing the
component of size $n-1$, and let $H_1,\ldots,H_n$ be the subgroups
containing the other components. Note that $|N|=n$ and $|H_i|=k, 1\le
i\le n$.

Since $\Gamma(G)$ is invariant under automorphisms of $G$, we have an
action of $G$ on the set $\Omega$ of components of
$\Gamma(G)\setminus\{1\}$ of size $k-1$ by conjugation. We show that
this action satisfies the conditions for a Frobenius group given
above.

Choose a prime $p$ dividing $k$. Since $n$ and $k$ are relatively
prime, $p$ does not divide $n$. So the subgroup $N$ cannot contain a
Sylow $p$-subgroup; but each of $H_1,\ldots,H_n$ contains such a
subgroup. By the conjugacy part of Sylow's theorem, $G$ acts
transitively on $\Omega$.

For $1\le i\le n$, each element of $H_i\setminus\{1\}$ acting by
conjugation, fixes itself and so fixes the component induced by
$H_i\setminus\{1\}$. That is, a non-identity element of
$G\setminus N$, acting by conjugation, fixes itself, and so fixes the
component containing it. We must show that it fixes no other
component. To that end, we count the fixed points of elements of $G$
acting on $\Omega$. The identity fixes $n$; each non-identity element
of $N$ fixes some non-negative number; and each of the remaining
elements $G\setminus N$ fix a positive number of elements. So the sum
of the fixed point numbers is at least $n+n(k-1)=nk$. But, by the
Orbit-Counting Lemma, this sum is equal to $|G|=nk$, since $G$ is
transitive. Consequently, non-identity elements of $N$ do not fix any
point of $\Omega$, and the remaining non-identity elements fix exactly
one point each. So $G$ is a Frobenius group.\qed
\end{proof}

Suppose $X$ is a graph given as input such that $X=\Gamma(G)$ for some
group $G$. Then Theorem~\ref{frob-thm} gives us a polynomial-time test
for checking if the group $G$ is a Frobenius group.

\begin{remark}
Above we assumed that our graph is a commuting graph. However, it is easy
to construct a graph satisfying the conditions of Theorem~\ref{frob-thm}
which is not a commuting graph. Can we find necessary and sufficient
conditions for $\Gamma$ to be the commuting graph of a Frobenius group?
\end{remark}

\section{Recognizing commuting graphs of extraspecial groups}\label{extrasp-sec}

In this section we design a polynomial-time algorithm that takes a
simple undirected graph $X=(V,E)$, with $n=p^{2r+1}$ vertices,
$r\ge 2$ and $p$ an odd prime, as input and determines if $X$ is the
commuting graph of an extraspecial group $G$ of order $p^{2r+1}$ (see
Section~\ref{prelim-sec} for definition). The algorithm will also compute a
certifying bijection $v\mapsto g_v$ labeling each vertex $v\in V$ by a
unique group element $g_v\in G$ such that the commuting relations
hold.

There are exactly two non-isomorphic extraspecial groups of order
$p^{2r+1}$, for each $r\ge 1$ and each prime $p$. These two groups,
which we denote as $G_1$ and $G_2$, are described below.

\begin{itemize}
\item The first group $G_1$ is generated by $2r+1$ elements
  $G_1=\angle{z,x_i,y_i, 1\le i\le r}$ such that $[x_i,x_j]=1$,
  $[y_i,y_j]=1$ for all $i, j$, and $[x_i,y_j]=1$ for all $i\ne
  j$, $[x_i,y_i]=z$ for all $i$, and $x_i^p=y_i^p=z^p=1$ for all
  $i$. The centre $Z(G_1)=\angle{z}$.

\item The second group $G_2$ also has $2r+1$ generators
  $G_2=\angle{z,x_i,y_i, 1\le i\le r}$ such that $[x_i,x_j]=1$,
  $[y_i,y_j]=1$ for all $i, j$, and $[x_i,y_j]=1$ for all $i\ne
  j$. And $[x_i,y_i]=z$ for all $i$, and $x_i^p=1=z^p$, $y_i^p=z$ for
  all $i$. The centre $Z(G_2)=\angle{z}$.
\end{itemize}

Each element $g\in G_1\setminus \{1\}$ has order $p$. On the other
hand, each element $g\in G_2\setminus \{1\}$ has order either $p$ or
$p^2$. Hence, $G_1$ and $G_2$ are non-isomorphic.
We refer to Gorenstein~\cite[Chapter 5]{Gorenstein} for more about
extraspecial groups.

We now analyze the commuting graph $\Gamma(G)$ for $G\in\{G_1,G_2\}$
of order $n=p^{2r+1}$. The vertices of $\Gamma(G)$ corresponding to
the centre $Z(G)=Z=\angle{z}$ are precisely the $p$ vertices of degree
$p^{2r+1}-1$ each. In either case ($G\in\{G_1,G_2\}$) we have:
\begin{itemize}
  \item The quotient group $G/Z$ is an elementary abelian $p$-group of order
    $p^{2r}$.
  \item The centre $Z$ can be identified with the finite field
$\F_p$ and $G/Z$ can be identified with the $2r$-dimensional vector
space $V(2r,p)=\F_p^{2r}$.
\end{itemize}

Furthermore, as $Z=G'$, for $x,y\in G$ we can write their commutator
$[x,y]=z^{i_{x,y}}$ for some $0\le i_{x,y}\le p-1$. Hence, the
commutation map
\[
(xZ(G),yZ(G))\mapsto [x,y]
\]
from $G/Z\times G/Z \to Z$ can be identified with a \emph{bilinear
form} $\sympl{,}: V(2r,p)\times V(2r,p) \to \F_p$ that satisfies
the following:

\begin{itemize}
\item It is \emph{symplectic}: We have $\sympl{x,y}=-\sympl{y,x}$ for
  all $x,y\in V(2r,p)$. In particular, $\sympl{x,x}=0$ for all $x\in
  V(2r,p)$.
\item It is \emph{non-degenerate}: there is no non-zero $u\in V(2r,p)$
  such that $\sympl{u,v}=0$ for all $v\in V(2r,p)$.
\end{itemize}

\begin{defn}\label{sympl-space-def}
  The vector space $V(2r,p)$ equipped with a symplectic form
  $\sympl{,}$ is a \emph{symplectic space}.
\end{defn}

If $\sympl{u,v}=0$ we say $u$ is \emph{orthogonal} to $v$. For a
subset $S$ of the symplectic space $V(2r,p)$ let
\[
S^\perp = \{u\in V(2r,p)\mid \sympl{u,v}=0 \text{ for all } v\in S\}
\]
denote the subspace of $V(2r,p)$ consisting of vectors
\emph{orthogonal} to each vector in $S$.

\begin{defn}\label{sympl-basis-def}
  A \emph{symplectic basis} for the symplectic space $V(2r,p)$ is a
  basis of $2r$ vectors $\bar{x}_i, \bar{y}_i, 1\le i\le r$ such
  that:
\begin{itemize}
\item $\sympl{\bar{x}_i,\bar{y}_i}=1$ for all $i$.
\item $\sympl{\bar{x}_i,\bar{y}_j}=0$ for all $i\ne j$.
\item $\sympl{\bar{x}_i,\bar{x}_j}=0$ for all $i,j$.
\item  $\sympl{\bar{y}_i,\bar{y}_j}=0$ for all $i,j$.
\end{itemize}
\end{defn}

The symplectic space $V(2r,p)$ has a symplectic basis \cite[Section
3.4.4]{Wilson-book}.

\begin{remark}
  We will describe an elementary self-contained polynomial-time
  algorithm for recognising commuting graphs of extraspecial
  groups. We have an alternative algorithm, based on the
  Buekenhout-Shult theorem \cite{bs} classifying certain point-line
  geometries, that exploits the finite geometry structure of the
  quotient group $G/Z(G)$. However, we present the former in the
  interest of keeping the proof entirely self-contained in the
  context of this paper.
\end{remark}

\begin{lemma}\label{G-to-sympl}
  Given as input the multiplication table of an extraspecial group $G$
  of order $p^{2r+1}$, in time polynomial in $p^{2r+1}$ we can
  construct its associated symplectic form $\sympl{,}$, as an explicit
  bilinear map from $\F_p^{2r}\times \F_p^{2r}\to \F_p$.
\end{lemma}

\begin{proof}
  Let $n=p^{2r+1}$. First, we compute the centre $Z=\angle{z}$ of $G$
  from the multiplication table. In order to compute $G/Z$ we can find
  its multiplication table from the given multiplication table of $G$.
  We can find a decomposition of $G/Z$ as the direct product of $2r$
  cyclic groups of order $p$ by finding \emph{independent generators}
  for $G/Z$ using a simple algorithm. Suppose
  $g_1,g_2,\ldots,g_j\in G/Z$ are independent (i.e. the identity can
  be expressed as a product of their powers only if the exponents are
  all zero). If $j<2r$ then the subgroup
  $H_j=\angle{g_1,g_2,\ldots,g_j}$ which is of size $p^j$ can be
  listed in time polynomial in $n$. We can pick $g_{j+1}$ as any
  element in $G\setminus H_j$ and continue the process until we pick
  $2r$ such independent generators which will generate $G/Z$.  That
  gives us an efficient identification of $G/Z$ with $V(2r,p)$.  For
  $xZ,yZ\in G/Z$ we will have corresponding vectors
  $v_x,v_y\in V(2r,p)$. If the commutator $[x,y]=z^{i_{x,y}}$ then we
  can define the symplectic form as $\sympl{v_x,v_y}=i_{x,y}$, where
  $i_{x,y}\in \F_p$. This completes the proof. \qed
\end{proof}

\begin{defn}\label{orth-graph-def}
The \emph{symplectic graph} of the symplectic space $V(2r,p)$
(equipped with symplectic form $\sympl{,}$) is an undirected simple
graph with $V(2r,p)$ as vertex set such that for each pair of distinct
vectors $u,v\in V(2r,p)$, $(u,v)$ is an undirected edge in the graph
precisely when $\sympl{u,v}=0$.
\end{defn}

In particular, let $S_1$ and $S_2$ denote the \emph{associated
  symplectic graphs} of the two extraspecial groups $G_1$
and $G_2$ of order $p^{2r+1}$, respectively.

\begin{lemma}\label{lem:iso}\hfill{~}
  \begin{enumerate}
  \item The commuting graphs $\Gamma(G_1)$ and $\Gamma(G_2)$
    are isomorphic.
  \item The associated symplectic graphs $S_1$ and $S_2$ are isomorphic.
\end{enumerate}
\end{lemma}

\begin{proof}
  Recall, as defined at the beginning of this section, that both
  $G_1=\angle{x_1,x_2,\ldots,x_r,y_1,y_2,\ldots,y_r,z}$ and
  $G_2=\angle{x_1,x_2,\ldots,x_r,y_1,y_2,\ldots,y_r,z}$. The commuting
  relations between the generators are identical. The only difference
  is that in $G_2$ each $y_i$ is of order $p^2$, and in $G_1$ they are
  all order $p$.

  Because of the commutation relation $[x_i,y_i]=z$ in both groups,
  where $Z=\angle{z}$ is the centre, each element in both groups
  can be expressed as products of the form
  \[
  x_1^{e_1}x_2^{e_2}\cdots x_r^{e_r}y_1^{f_1}y_2^{f_2}\cdots y_r^{f_r}\cdot z^c,
  \]
  where $0\le c,e_i,f_i\le p-1$ for all $i$. Suppose now we have two
  elements $x=\prod_{i=1}^r x_i^{e_i}\cdot \prod_{i=1}^r
  y_i^{f_i}\cdot z^c$ and $y=\prod_{i=1}^r x_i^{e'_i}\cdot
  \prod_{i=1}^r y_i^{f'_i}\cdot z^{c'}$.  Then, using the commutation
  relation $[x_i,y_i]=z$, which holds in both $G_1$ and $G_2$, we can
  check that $x$ and $y$ commute if and only if
  $\sum_{i=1}^re_if'_i=\sum_{i=1}^r f_ie'_i~(\Mod p)$. It follows that
  $(x,y)$ is an edge in $\Gamma(G_1)$ if and only if it is an edge in
  $\Gamma(G_2)$.  Hence the commuting graphs $\Gamma_1$ and $\Gamma_2$ are
isomorphic.

  Let $V_1$ and $V_2$ denote the symplectic spaces corresponding to
  the groups $G_1$ and $G_2$, respectively. For $G\in\{G_1,G_2\}$, as
  already observed, $G/Z$ is isomorphic to $V(2r,p)$ (as an additive
  group). Let $x_iZ\mapsto \bar{x}_i$ and $y_iZ\mapsto \bar{y}_i$,
  $1\le i\le r$, under this isomorphism, for $\bar{x}_i, \bar{y}_i\in
  V(2r,p)$ and $1\le i\le r$. Then, $xZ$ for
\[
  x=\prod_{i=1}^r
  x_i^{e_i}\cdot \prod_{i=1}^r y_i^{f_i}\cdot z^c
\]
maps to $\bar{x}=\sum_{i=1}^r(e_i\bar{x}_i+f_i\bar{y}_i)$. Likewise,
$y=\prod_{i=1}^r x_i^{e'_i}\cdot \prod_{i=1}^r y_i^{f'_i}\cdot z^{c'}$
maps to
$\bar{y}=\sum_{i=1}^r(e'_i\bar{x}_i+f'_i\bar{y}_i)$. Therefore, we can
evaluate $\sympl{\bar{x},\bar{y}}$ using the fact that
$\sympl{\bar{x}_i,\bar{x_j}}=0$, $\sympl{\bar{y}_i,\bar{y_j}}=0$, for
all $i\ne j$ and for all $i$ we have $\sympl{\bar{x}_i,\bar{y}_i}=1$
and $\sympl{\bar{y}_i,\bar{x_i}}=-1$.
  \[
  \sympl{\bar{x},\bar{y}}=\sum_{i=1}^r (e_if'_i - e'_if_i)
\]
It follows that the vertices $\bar{x}$ and $\bar{y}$ are adjacent in
the symplectic graph $S_1$ if and only if they are adjacent in $S_2$. \qed
\end{proof}

We now show that the symplectic graph of an extraspecial group $G$ can
be computed in polynomial time, given just its commuting graph
$\Gamma(G)$ as input.

\begin{lemma}\label{commute-to-sympl}
  Given as input the commuting graph $\Gamma(G)$, of an extraspecial
  group $G$ of order $p^{2r+1}$, $p$ an odd prime, we can compute in
  polynomial time an undirected graph $X_s=(V_s,E_s)$ with
  $|V_s|=p^{2r}$ such that $X_s$ is the symplectic graph
  associated with $G$.
\end{lemma}

\begin{proof}
  In the commuting graph $\Gamma(G)$, which has vertex set $G$, there
  are exactly $p$ dominant vertices, corresponding to the $p$ elements
  of the centre $Z$. Let $Z$ also denote this subset of vertices of
  $\Gamma(G)$.

  Recall that the \emph{closed neighborhood} of a vertex $v\in G$ is defined
  as
\[
\bar{N}(v) = \{u\in V\mid (u,v)\in E\} \cup \{v\}.
\]
Vertices $v_1,v_2,\in G$ are \emph{closed twins} if
$\bar{N}(v_1)=\bar{N}(v_2)$. This equality clearly defines an
equivalence relation (the \emph{closed twins} equivalence) on
$G\setminus Z$. For each $h\in G\setminus Z$ let $[h]$ denote the
closed twins equivalence class containing $h$.

\begin{claim}
  For every $h\in G\setminus Z$ the closed twins equivalence class
  $[h]$ is equal to $H\setminus Z$, where $H$ is the subgroup of $G$
  generated by $Z\cup \{h\}$.
\end{claim}

To see this claim, let $h\in G\setminus Z$. Consider the subgroup $H$
of $G$ generated by $Z\cup \{h\}$, where the centre
$Z=\angle{z}$. Then $|H|=p^2$ and $H=\{z^ih^j\mid 0\le i, j \le p-1\}$.
Furthermore,
\[
  H\setminus Z=\{z^ih^j\mid 0\le i\le p-1, 1\le j\le p-1\}.
\]
Now, as $\gcd(j,p)=1$ for nonzero $j\le p-1$, it follows that any
$x\in G\setminus Z$ commutes with $z^ih^j\in H\setminus Z$ if and only
if $x$ commutes with $h$. Hence, every element of $H\setminus Z$ is a
closed twin of $h$ which implies that $H\setminus Z\subseteq [h]$.

Let $g\notin G\setminus H$. We will show that $g\notin [h]$ by
exhibiting an element of $G\setminus Z$ that commutes with $h$ but not
with $g$. That will show that $H\setminus Z=[h]$, which will complete
the proof of the claim.

As $g\notin H$, we have $Zg\notin H/Z$. Now, identifying $G/Z$ with
the symplectic space, the coset $Zh$ maps to a vector $\bar{x}_1$ of
$V(2r,p)$ and $H/Z$ maps to the $1$-dimensional subspace of $V(2r,p)$
spanned by $\bar{x}_1$. We can extend this vector to a symplectic
basis $\bar{x}_i, \bar{y}_i, 1\le i\le r$ of $V(2r,p)$ with the
relations $1=\sympl{x_i,y_i}=-\sympl{y_i,x_i}$ along with the other
relations (as in the proof of Lemma~\ref{lem:iso}).

As $Zg\not\in H/Z$, its image $u\in V(2r,p)$ in the symplectic space
is of the form
\[
  u = \sum_{i=1}^r (\alpha_i\bar{x}_i + \beta_i \bar{y}_i),
\]
where either $\alpha_i\ne 0$ for some $i\ne 1$ or some $\beta_j\ne 0$
with $1\le j\le r$.  In the former case, $\sympl{u,\bar{x}_i}\ne 0$ but
$\sympl{\bar{x}_1,\bar{x}_i}=0$, and in the latter case
$\sympl{\bar{x}_1,\bar{y}_j}=0$ but $\sympl{u,\bar{y}_j}\ne 0$.

It follows that there is an element $x\in G\setminus Z$ that commutes
with $h$ but not with $g$. Hence, $g\notin [h]$.

Thus, the equivalence classes defined by the closed-twins equivalence
identifies all $p^2-p$ size vertex subsets of $G\setminus Z$ that
corresponds to $H\setminus Z$ for each subgroup $H$ of order $p^2$
such that $Z\le H\le G$. The number of such equivalence classes is
\[
\frac{p^{2r+1}-p}{p^2-p} = 1 + p + p^2 +\cdots + p^{2r-1}.
\]

The preceding discussion immediately implies the following.

\begin{claim}\label{cl:bip} For any two order $p^2$ subgroups $H$
  and $H'$ such that $Z\le H\le G$ and $Z\le H'\le G$, the subgraph of
  $\Gamma(G)$ induced by $(H\setminus Z)\cup (H'\setminus Z)$ either
  forms a complete bipartite graph (with $H\setminus Z$ and
  $H'\setminus Z$ as the two parts) or there are no edges between
  these two parts.
\end{claim}

To obtain the corresponding symplectic graph $X_s=(V_s,E_s)$ we
apply the following three steps to $\Gamma(G)$.

\begin{itemize}
\item[(a)] Collapse each equivalence class in $G\setminus Z$ into a
  single vertex. This gives rise to a set $V_c$ of
  $\sum_{i=0}^{2r-1}p^i$ many vertices. The edges between vertices
  in $V_c$ are naturally inherited from $\Gamma(G)$ by Claim~\ref{cl:bip}.
\item[(b)] Include a new vertex $v_0$ corresponding to the $0$ element
  of $V(2n,p)$; this corresponds to $p$ dominant vertices of $X$.
\item[(c)] The process of collapsing the closed-twin equivalence
  classes identifies vectors $v\in V(2r,p)$ with all the $p-1$ non-zero
  scalar multiples $\alpha v, \alpha\in\F^*_p$. We restore the $p-1$
  copies by replacing each $v\in V_c$ by $p-1$ copies. The edges
  between these vertices are naturally inherited.
\end{itemize}

By construction, $X_s$ is the symplectic graph of the symplectic
form on $V(2r,p)$ defined by the commuting graph $\Gamma(G)$. \qed
\end{proof}

\subsection{Recognizing the symplectic graph}

We now show that the symplectic graph of a non-degenerate symplectic
form on $V(2r,p)$ can be recognized in polynomial time.

\begin{theorem}\label{ortho-recog-thm}
  Given a simple undirected graph $X=(V,E)$ on a vertex set $V$ of
  size $p^{2r}$, for an odd prime $p$, in time polynomial in the size
  of $X$ we can recognize if $X$ is the associated symplectic graph of
  an extraspecial group of order $p^{2r+1}$. Furthermore, the
  algorithm will also compute a bijection from $V$ to $V(2r,p)$ and
  determine an associated symplectic form $\sympl{,}$.
\end{theorem}

\begin{proof}
  The somewhat detailed proof is structured as a series of claims
  followed by a description of the algorithm and its analysis.

  Let $X=(V,E)$ be a graph with $p^{2r}$ vertices. Our algorithm will
  be recursive, with correctness proof based on an inductive
  argument. We know that the symplectic space $V(2r,p)$ has a
  symplectic basis of $2r$ vectors of the form
  $\bar{x}_i, \bar{y}_i, 1\le i\le r$ as defined in
  Definition~\ref{sympl-basis-def}.

%\begin{itemize}
%\item $\sympl{\bar{x}_i,\bar{y}_i}=1$ for all $i$.
%\item $\sympl{\bar{x}_i,\bar{y}_j}=0$ for all $i\ne j$.
%\item $\sympl{\bar{x}_i,\bar{x}_j}=0$ for all $i,j$.
%\item  $\sympl{\bar{y}_i,\bar{y}_j}=0$ for all $i,j$.
%\end{itemize}

We first note that given a symplectic form $\sympl{,}$ on $V(2r,p)$,
we can construct a symplectic basis for $V(2r,p)$ with the following
greedy procedure.
\begin{itemize}
\item Pick $\bar{x}_1\ne 0$ in $V(2r,p)$ arbitrarily.
\item Since $\sympl{,}$ is non-degenerate, we can find a vector
  $\bar{u}\in V(2r,p)$ such that $\sympl{\bar{x}_1,\bar{u}}=\alpha\ne
  0$.  Define $\bar{y}_1 =\alpha^{-1}\bar{u}$ such that
  $\sympl{\bar{x}_1,\bar{y}_1}=1$.
\item Then notice that the subspace $\{\bar{x}_1,\bar{y}_1\}^\perp$ is
  a $2r-2$ dimensional symplectic space $V'$ (w.r.t. the same
  symplectic form). We can continue with the basis construction,
  recursively applying the above two steps to $V'$.
\end{itemize}

Now, suppose $X=(V,E)$ is the graph given as input on $p^{2r}$
vertices. If $X$ is indeed the symplectic graph of $V(2r,p)$ for some
symplectic form $\sympl{,}$, we will describe a polynomial-time
recursive procedure that recovers a bijection from $V$ to $V(2r,p)$
along with a symplectic form $\sympl{,}$ as witness for $X$ being the
associated symplectic graph. We will then argue that if $X$ is not a
symplectic graph then this procedure detects that.

Essentially, the problem is to efficiently simulate the above two-step
basis construction procedure given only the graph $X=(V,E)$ as input.
As the symplectic space is non-degenerate, the zero vector $0$ is the
only vertex in $X$ adjacent to all others and is easily identified.
Let $x_1$ be any other vertex in $V$ and we choose a non-zero vector
$\bar{x}_1\in V(2r,p)$ as its image. Next, we will choose $y_1$ as any
vertex not adjacent to $x_1$ in the graph $X$ and choose a non-zero
vector $\bar{y}_1\in V(2r,p)$ as its image. Let

\[
V'=\{v\in V\mid (v,x_1)\in E \text{ and } (v,y_1)\in E\}.
\]
That is, $V'$ is the common neighborhood of $x_1$ and $y_1$.  We can
easily identify $V'$ in $X$. Let $X'$ be the subgraph of $X$ induced
by the vertex subset $V'$. We have the following simple observation.
\begin{claim}\label{symp-cl1}
 If $X$ is the symplectic graph of a $2r$-dimensional symplectic space
 over $\F_p$ then the graph $X'$ is the symplectic graph of a
 symplectic space of dimension $2r-2$ over $\F_p$.
\end{claim}

The algorithm will now recursively check that $X'$ is indeed the
symplectic graph of a $2r-2$-dimensional symplectic space $V(2r-2,p)$
over $\F_p$. If so, it will compute a labeling of the vertices of $V'$
by linear combinations $\sum_{i=2}^r(\alpha_i \bar{x}_i + \beta_i
\bar{y}_i), \alpha_i,\beta_i\in \F_p$ that is consistent with the
orthogonality relation of a symplectic form $\sympl{,}$. Here the
basis vectors computed by the algorithm are $\bar{x}_i,\bar{y}_i$ for
$2\le i\le r$. At the end of the recursive call, the algorithm computes a
bijection from $V'$ to $V(2r-2,p)$ that labels each vertex in $V'$ by
a distinct linear combination of the basis vectors.

Now, the remaining task for the algorithm is to find a labeling of the
vertices in $V\setminus V'$ consistent with a symplectic form on
$V(2r,p)$, assuming $X$ is a symplectic graph.

For each vertex $v$ we will denote its vector label by $\bar{v}$.  For
a labeled subset $S$ of vertices of $V$ we will use $\bar{S}\subseteq
V(2r,p)$ to denote the set of vectors labeling vertices in $S$.

\begin{claim}\label{symp-cl2}
  A vertex $v\in V\setminus V'$ can be labeled by a non-zero vector
  $\alpha \bar{x}_1 + \beta \bar{y}_1$ in $V(2r,p)$,
  $\alpha,\beta\in\F_p$, if and only if $(v,u)\in E$ for all
  $u\in V'$.
\end{claim}

\claimproof{Consider the symplectic graph of $V(2r,p)$. Let
  $V(2r-2,p)$ denote the subspace spanned by $\bar{x}_i,\bar{y}_i, 2\le i\le n$.
  Clearly, every vector of the form $\alpha \bar{x}_1 + \beta \bar{y}_1,
  \alpha,\beta\in\F_p$ is orthogonal to each vector in
  $V(2r-2,p)$. Conversely, consider a vector $\alpha \bar{x}_1 + \beta \bar{y}_1
  +v\in V(2r,p)$, where $v\in V(2r-2,p)$ is non-zero. Since $V(2r-2,p)$
  is non-degenerate, there is a $u\in V(2r-2,p)$ such that
  $\sympl{v,u}\ne 0$ which implies $\sympl{\alpha \bar{x}_1 +\beta \bar{y}_1 +
    v,u}=\sympl{v,u}\ne 0$.}

Thus, the algorithm can find a subset $V_{12}$ of precisely $p^2-1$
many such vertices in $V$ that are adjacent to all of $V'$, of which
we have already labeled two vertices as $\bar{x}_1$ and $\bar{y}_1$.
We analyze the subgraph $X_{12}$ of $X$ induced by $V_{12}$.

\begin{claim}\label{symp-cl3}
  The subgraph $X_{12}$ induced by $V_{12}$ consists of a disjoint
  union of $p+1$ many $(p-1)$-size cliques,
  $C_1,C_2,\ldots,C_{p+1}$. Furthermore, the labeling procedure must
  label the vertices of each $C_i$ from the set
  $\{\alpha \bar{u}\mid \alpha\in\F^*_p\}$ of $p-1$ vectors, for some
  non-zero vector $\bar{u}\in\Span(\bar{x}_1,\bar{y}_1)$.
\end{claim}

\claimproof{It is clear from the definition of $V_{12}$ that the
  vertices in it must be labeled by nonzero vectors in
  $\Span(\bar{x}_1,\bar{y}_1)$. The proof follows easily by observing
  that a non-zero vector $\alpha\bar{x}_1+\beta\bar{y}_1$ is
  orthogonal only to itself and its scalar multiples and to no other
  vectors in $\Span(\bar{x}_1,\bar{y}_1)$.}

By the above claim, we can consistently label each vertex $u\in
V_{12}$ by a vector $\bar{u}\in\Span(\bar{x}_1,\bar{y}_1)$ assuming
$X$ is a symplectic graph.

\begin{claim}\label{symp-cl4}
  If $X=(V,E)$ is the symplectic graph of the symplectic space
  $V(2r,p)$ and $V'$ corresponds to the symplectic subspace
  $V(2r-2,p)$ spanned by $\{\bar{x}_i,\bar{y}_i\mid 2\le i\le r\}$
  then for any vertex $u\in V_{12}$ the vertex subset
  \[
  V[u]=\{v\in V\setminus V'\mid (v,u)\in E\}
  \]
has to be labeled by the vectors
\[
\{\alpha \bar{u} +\bar{v}\mid \alpha\ne 0,
\bar{v}\in\Span(\cup_{i=2}^r\{\bar{x}_i,\bar{y}_i\})\}
\]
of $V(2r,p)$.
\end{claim}

\claimproof{Suppose $v\in V\setminus V'$ is labeled by the vector
  $\bar{a}+\bar{b}$, where $\bar{a}\in\Span(\bar{x}_1,\bar{y}_1)$ and
  $\bar{b}\in V(2r-2,p)$. As $v\in V[u]$ we have
  \[
  \sympl{\bar{a}+\bar{b},\bar{u}}=0.
\]
  As $\sympl{\bar{b},\bar{u}}=0$ it implies that
  $\sympl{\bar{a},\bar{u}}=0$ which means $\bar{a}=\alpha
  \bar{u}$ for some non-zero scalar $\alpha$. Conversely, any
  vertex labeled by $\alpha \bar{u} +\bar{b}$ for $\bar{b}\in
  V(2r-2,p)$ has to lie in $V[u]$ if the labeling is consistent with
  the symplectic graph $X$.}

It follows that $|V[u]|=(p-1)\cdot p^{2r-2}$. Furthermore, if vertices
$u$ and $v$ are in the same $(p-1)$-clique in the graph $X_{12}$ then
$V[u]=V[v]$. As
$(p+1)(p-1)\cdot p^{2r-2}=p^{2r}-p^{2r-2}=|V\setminus V'|$, it follows
that the different $V[u]$ are all pairwise disjoint and form a
partition of $V\setminus V'$. Consequently, we can identify the $p+1$
many equisized vertex subsets $V[u]$ which form a partition of
$V\setminus V'$.

\subsubsection*{Labelling vertices in $V[u]$}

By Claim~\ref{symp-cl4}, the vertices of $V[u]$ have to be labeled by
vectors from $\{\alpha\bar{u}+\bar{v}\mid v\in V', \alpha\ne 0\}$.  We
will examine the adjacencies in the subgraph induced by $V[u]$ and
also the adjacencies across the different $V[u]$ and $V[u']$ to
determine a consistent labeling. Let $X[u]$ denote the subgraph of $X$
induced by $V[u]$.

For distinct vertices $v_1, v_2\in V'$ notice that
\begin{equation}\label{eqn-V[u]}
  \sympl{\bar{v}_1,\bar{v}_2} =\sympl{\alpha \bar{u}+\bar{v}_1,\beta
    \bar{u}+\bar{v}_2}.
\end{equation}
Thus, in any consistent labeling of $V[u]$ suppose
$S_{u,v_i}\subset V[u]$ are the $p-1$ vertices labeled by
$\{\alpha\bar{u}+\bar{v}_i\mid \alpha \ne 0\}$ for $i\in\{1,2\}$. Then
Equation~\ref{eqn-V[u]} implies that in the subgraph $X[u]$ the
bipartite graph induced by the subsets $S_{u,v_1}$ and $S_{u,v_2}$
is either complete or empty.

Furthermore, $\sympl{\alpha\bar{u}+\bar{v},\beta\bar{u}+\bar{v}}=0$
for all non-zero $\alpha,\beta$. Hence each $S_{u,v}, v\in V'$ induces
a $(p-1)$-clique.

How do we identify each $S_{u,v}$? Suppose $v_1,v_2\in V'$ are
distinct vertices. In $V'$ (which is already labeled by $V(2r-2,p)$)
we can find a vertex $v_3$ such that $\sympl{v_1,v_3}\ne 0$ and
$\sympl{v_2,v_3}=0$. Hence each $S_{u,v}$ can be identified because
\begin{itemize}
\item All $p-1$ vertices in $S_{u,v}$ are adjacent to each other. They
  all have identical neighborhoods in $X[u]$ by the above argument.
\item Every vertex in $V[u]\setminus S_{u,v}$ has a different
  neighborhood in $X[u]$ than the vertices in $S_{u,v}$.
\end{itemize}

Putting it together, in summary, we have shown the following.

\begin{claim}\label{symp-cl5}
  In polynomial time we can uniquely identify all the subsets
  $S_{u,v}, v\in V'$, each of size $p-1$, forming a partition of
  $V[u]$, for each $u\in V_{12}$.
\end{claim}

Thus, we have identified the subsets $S_{u,v}\subset V[u]$ for $v\in
V'$. We now need to label the elements of $S_{u,v}$ by vectors
$\{\alpha\bar{u}+\bar{v}\mid \alpha\in \F^*_p\}$.

\begin{claim}\label{symp-cl6}
Assuming $X$ is a symplectic graph, for each $v\in V'$ and $u\in
V_{12}$, we can label the $p-1$ vertices in $S_{u,v}$ by the $p-1$
vectors $\alpha\bar{u}+\bar{v}, \alpha\in\F^*_p$ arbitrarily to get a
labeling for $V\setminus V'$ that is consistent with the labeling of
$V'$.
\end{claim}

\claimproof{We first observe that adjacencies within $V[u]$ and
  between $V[u]$ and $V'$ are consistent with such a labeling by the
  arguments preceding the claim.

  We now examine the adjacencies across two disjoint $V[u]$ and
  $V[u']$. Consider $S_{u,v}\subset V[u]$ and $S_{u',v'}\subset
  V[u']$. There is an edge between these subsets if and only if for
  some non-zero scalars $\alpha$ and $\beta$ we have
  $\sympl{\alpha\bar{u}+\bar{v},\beta\bar{u'}+\bar{v'}}=0$. Now,
  $\sympl{\bar{u},\bar{u'}}\ne 0$. Hence we must have
  \[
    \alpha\beta = \frac{-\sympl{\bar{v},\bar{v'}}}{\sympl{\bar{u},\bar{u'}}}.
  \]
  Since $\alpha$ and $\beta$ are non-zero scalars, it follows that
  $\sympl{\bar{v},\bar{v'}}\ne 0$. It follows that $v$ and $v'$ being
  non-adjacent in the graph induced by $V'$ is a necessary condition
  for $S_v$ and $S_{v'}$ to have any edges between them. Furthermore,
  the above equation implies that for the vertex, say $v_\alpha\in
  S_{u,v}$ labeled $\alpha\bar{u}+\bar{v}$ there is a unique $\beta\ne
  0$ such that $v_\alpha$ is adjacent to the vertex, say $v'_\beta$ in
  $S_{u',v'}$ labeled $\beta\bar{u'}+\bar{v'}$. Hence, either there
  are no edges between $S_v$ and $S_{v'}$ (if $v$ and $v'$ are
  adjacent) or there is a perfect matching between $S_v$ and $S_{v'}$(
  if $v$ and $v'$ are non-adjacent).

  Since these are all the possible edges in $V\setminus V'$ and it is
  clearly consistent with any labeling of $S_{u,v}$ by the $p-1$
  vectors $\alpha\bar{u}+\bar{v}, \alpha\ne 0$ for all $u\in V_{12}$
  and $v\in V'$, this completes the proof of the claim.}

Putting it together, clearly if $X$ is the symplectic graph of some
non-degenerate symplectic form on $V(2r,p)$ the above algorithm
verifies that by constructing a symplectic form consistent with the
symplectic graph. The algorithm is described as a recursive procedure
below. Essentially, given a graph $X=(V,E)$ with $|V|=p^{2r}$ vertices
as input it will try to compute a labeling of $V$ by the $p^{2r}$
vectors in
$V(2r,p)=\{\sum_{i=1}^r(\alpha_i\bar{x}_i+\beta_i\bar{y}_i)\mid
\alpha_i,\beta_i\in \F_p\}$, such that the edges of $X$ are consistent
with some symplectic form on $V(2r,p)$. If $X$ is a symplectic graph
it will succeed. Otherwise, the procedure will fail and detect that
$X$ is not symplectic.

\medbreak

\noindent\textbf{Procedure Label$(X,2r,p)$}
\begin{enumerate}
\item[] \emph{The procedure takes as input a simple graph $X=(V,E)$ with
$p^{2r}$ vertices for an odd prime $p$ and computes a
labeling consistent with some symplectic form on $V(2r,p)$ if
$X$ is a symplectic graph. Otherwise, it returns ``fail''.}

\item Identify the unique dominant vertex $v_0\in V$ and label it by
  $0$ (the zero vector).

\item Pick any pair of nonadjacent vertices $x_1, y_1\in V\setminus
  \{v_0\}$ and label them by the basis vectors $\bar{x}_1$ and
  $\bar{y}_1$ respectively.

\item Let $V'$ be the set of all vertices adjacent to both $x_1$ and
  $y_1$. Check $|V'|=p^{2r-2}$. Let $X'$ be the subgraph induced by
  $V'$. Recursively call Label$(X',2r-2,p)$ and obtain a labeling of
  $V'$ by vectors in $V(2r-2,p)$ defined by the basis
  $\bar{x}_i,\bar{y}_i, 2\le i\le p$ and consistent with a symplectic
  form on $V(2r-2,p)$ (Claim \ref{symp-cl1}).

\item Determine the $p^2-1$ vertices $V_{12}$ (Claim \ref{symp-cl2})
  consisting of vertices in $V\setminus V'$ that are adjacent to all
  vertices in $V'$. Determine the partition of $V_{12}$ into $p+1$
  many $(p-1)$-cliques in the induced subgraph $G_{12}$ (Claim
  \ref{symp-cl3}).

\item From the adjacencies between $V\setminus V'$ and $V'$ determine
  the partition of $V\setminus V'$ into $p+1$ subsets $V[u]$
  (Claim \ref{symp-cl4}).

\item From the subgraphs $G[u]$ induced by the respective $V[u]$,
  determine the partition of $V[u]$ into $p-1$-subsets $S_{u,v}, v\in
  V'$ (Claim \ref{symp-cl5}).

\item Label the vertices in $S_{u,v}, v\in V'$ and $u\in V_{12}$ by
  the vectors $\alpha\bar{u}+\bar{v}, \alpha\ne 0$
  (Claim~\ref{symp-cl6}).

\item If any step above does not satisfy the stated property then
  output \emph{fail}. Else output \emph{pass} along with the complete
  labeling of $V$ by elements of $V(2r,p)$ as the witness that $X$ is
  a symplectic graph.
\end{enumerate}

\noindent\textbf{Running time analysis}~~Let $T(n)$ denote the
running time for constructing a symplectic form $\sympl{,}$ for
$V(2r,p)$ consistent with $X$ as the symplectic graph, where
$n=p^{2r}$ is the number of vertices in $X$. The inductive
construction and the rest of the computation implies the recurrence
$T(n) = O(|V|^2) +T(n-1)$, which gives an overall cubic bound
$T(n)=O(|V|^3)=O(p^{6r+3})$ on the running time.

Putting it together, we have a polynomial-time algorithm that checks
if $X$ is the symplectic graph for the symplectic space by finding
out a labeling of vertices by the vectors along with a consistent
symplectic form. \qed
\end{proof}

In summary, we have the following.

\begin{theorem}
  Given a graph $X=(V,E)$ with $p^{2r+1}$ vertices we can determine in
  polynomial time if it is the commuting graph of an extraspecial
  group of order $p^{2r+1}$ and, if so, label vertices by unique group
  elements satisfying the commuting relation.
\end{theorem}

\begin{proof}
  By Lemma~\ref{commute-to-sympl} we can obtain the graph
  $X_s=(V_s,E_s)$ from $X$ in polynomial time. By
  Theorem~\ref{ortho-recog-thm} we can check if $X_s$ is the
  symplectic graph of a symplectic form $\sympl{,}$ on $V(2n,p)$
  and also find the symplectic form.

  From the proof of Lemma~\ref{commute-to-sympl}, we have the
  following observations.
\begin{itemize}
\item The vertex of degree $|V_s|-1$ in $X_s$ is the $0$ element of
  $V(2n,p)$, and corresponds to $Z$ in $G$.
\item For each vertex $x\in V_s$ there is a closed-twins equivalence
  class of size $p-1$ containing $x$. There is a corresponding subset
  of vertices $H_x\setminus Z$ in $X$ of size $p^2-p$, where $H_x$, in
  turn, corresponds to a subgroup of $G$ of order $p^2$ that contains
  $Z$. We have a labeling of the vertex $x$ by a linear combination
  $\sum_{i=1}^n( \alpha_i \bar{x}_i + \beta_i \bar{y}_i)$, and the remaining $p-2$
  vertices in the equivalence class are labeled by non-zero scalar
  multiples of this linear combination.
\end{itemize}

  Now, using the description of the extraspecial group $G_1$ we can
  label the vertices of the clique $H_x\setminus Z$ with the group
  elements
\[
  z^j\cdot \left(\prod_{i=1}^n x_i^{\alpha_i}.y_i^{\beta_i}\right)^k,
  0\le j\le p-1, 1\le k \le p-1.
\]

 This will give us the labeling of $X$ with the elements of the
 extraspecial group $G_1$ of order $p^{2r+1}$, consistent with the
 commuting graph $X$.

 Observe that if $X_s$ is not a symplectic graph then Procedure
 Label described in Theorem~\ref{ortho-recog-thm} will detect that and
 return \emph{fail}. This completes the proof. \qed
\end{proof}

\section{A quasipolynomial time algorithm in the general case}

Assuming CFSG (the Classification of Finite Simple Groups
\cite{CFSG}), in this section we present a $2^{O(\log^3 n)}$ time
algorithm for checking if a given $n$-vertex graph is the commuting
graph of some $n$-element group and, if so, labeling the vertices of
the graph by the group elements consistent with all the commuting
pairs. For CFSG in this section we will refer to the (concrete)
constructions of finite simple groups described in Wilson's text
\cite{Wilson-book}.

\begin{enumerate}
\item Our main idea is the notion of \emph{description} $\desc(G)$ for
  any finite group $G$ that we develop in this section. It is
  essentially an encoding of the group $G$ as a \emph{binary string}
  of length at most $c\cdot \log^3 |G|$, for some fixed constant
  $c>0$, from which $G$ can be recovered efficiently. More precisely,
  we will describe an algorithm $\mathcal{D}$ that takes as input a
  binary string $w$ and, if $w=\desc(G)$ for some finite group $G$ the
  algorithm $\mathcal{D}$ will computes the multiplication table of
  $G$ as output in time polynomial in $|G|$. Otherwise, the algorithm
  will reject $w$ as an invalid description.

\item We note that this algorithm $\mathcal{D}$ can be used to
  generate the multiplication tables of all groups $G$ of order $n$ as
  follows. We enumerate all possible binary strings $w$ of length at
  most $c\cdot \log^3 n$. For each such $w$ we run the algorithm
  $\mathcal{D}$ on it. If $w=\desc(G)$ for some $G$ then $\mathcal{D}$
  will output its multiplication table. We compute $\Gamma(G)$ from
  $G$ and check if $X$ is isomorphic to $\Gamma(G)$ in time
  $2^{O(\log^3 n)}$ using Babai's graph isomorphism algorithm
  \cite{B16}.
\end{enumerate}

Thus, the main task of this section is to define $\desc(G)$ and
describe the algorithm $\mathcal{D}$. We will first define $\desc(G)$
for finite simple groups, along with how the algorithm $\mathcal{D}$
will compute the multiplication table for $G$ given $\desc(G)$ as
input. We will then extend the notion of $\desc(G)$ to all groups by
using a composition series for $G$ combined with $\desc(N)$ for each
composition factor $N$ of $G$.

\begin{remark}
  We emphasize here that our notion of ``short'' description is
  entirely different from the well-studied notion of
  generator-relator presentations of groups (see e.g.\
  \cite{Bab}). Indeed, $\desc(G)$ and its construction will be
  significantly simpler. It suffices for the purpose of recovering the
  multiplication table of the group $G$ in time polynomial in $|G|$.
\end{remark}

\subsection{Short descriptions for finite simple groups assuming CFSG}\label{cfsg-sec}

We first briefly summarize the classification theorem of finite simple
groups (CFSG)~\cite{CFSG}. A finite simple group is one of the
following:

\begin{itemize}
\item A cyclic group of prime order.
\item An alternating group $A_n, n\ge 5$.
\item A finite simple group of Lie type.
\item A sporadic finite simple group.
\end{itemize}

As there are only finite many sporadic finite simple groups, for the
algorithm $\mathcal{D}$ they can be trivially dealt with.  The cyclic
groups of prime order are also easy to deal with for the algorithm
$\mathcal{D}$ as there is a unique such subgroup of order $p$ for each
prime $p$.

\begin{lemma}\label{cyclic}
  If $G$ is cyclic of prime order $p$ then $\desc(G)$ can be defined
  as the binary encoding $\bin(p)$ of $p$ which is of length $\log p$
  bits. We can include $1$ as the generator with the additive group
  $\mathbb{Z}_p$ as the group it encodes.
\end{lemma}

\begin{proof}
  The algorithm $\mathcal{D}$ will take the binary encoding $\bin(p)$
  of $p$, verify that $p$ is prime using the polynomial-time primality
  test, and then write out the group table for the additive group of
  integers modulo $p$. \qed
\end{proof}

For several of the remaining cases, the following simple lemma gives a
framework for defining $\desc(G)$ and computing $G$'s multiplication
table from it. We use the well-known fact that any finite simple group
can be generated with two elements (see~\cite{DMT}).

\begin{lemma}\label{naive}
  Let $H$ be a finite group and $N\lhd K\le H$ be subgroups such that
  \begin{itemize}
  \item $\desc(H)$ is of length $m$.
  \item The multiplication table for $H$ can be computed from
    $\desc(H)$ in time polynomial in $|H|$.
  \item We can test membership of elements in $N$ in time polynomial
    in $|H|$.
  \end{itemize}
  Suppose $G=K/N$ is simple. Then $\desc(G)$ has size
  $O(m + \log |H|)$ and in time polynomial in $|H|$ we can compute the
  multiplication table for $G$.
\end{lemma}

\begin{proof}
  As $G=K/N$ is simple it can be generated by two elements
  $Ng_1,Ng_2\in K/N$ where $g_1,g_2\in K$. Let $\desc(G)$ be the
  indices of these two elements in $H$ along with $\desc(H)$.  By
  indices here we mean the positions of these elements in the
  multiplication table for $H$. The two indices into $H$ are of size
  $O(\log |H|)$. Therefore, $\desc(G)$ is of size $O(m + \log
  |H|)$. The claimed algorithm $\mathcal{D}$ will first use $\desc(H)$
  to compute the multiplication table for $H$. It can enumerate $H$
  and use the indices to identify the elements $g_1$ and $g_2$ in $H$.
  Using the multiplication table for $H$ it can generate the subgroup
  $K=\angle{g_1,g_2}$, listing down all its elements in time
  polynomial in $|H|$. Next, the algorithm can identify the normal
  subgroup $N$ (as given in the hypothesis) in time polynomial in
  $|H|$ and obtain the multiplication table for $G=K/N$.\qed
\end{proof}

We will apply the above lemma to the remaining cases of finite simple
groups.

\begin{lemma}\label{altern}
  If $G=A_n$ for some $n\ge 5$ then we can define $\desc(G)$
of size $O(\log |G|)$.
\end{lemma}

\begin{proof}
  We can define $\desc(A_n)$ as the encoding of two generators of
  $A_n$, by giving the index of the two generating permutations, say
  $g_1,g_2$ in $S_n$, where we consider the lexicographic order of
  enumeration of $S_n$. More precisely, each element $\pi\in S_n$ as
  an $n$-tuple $(1^\pi,2^\pi,\ldots,n^\pi)$ and the enumeration begins
  with $(1,2,\ldots,n)$ and ends with $(n,n-1,\ldots,1)$. Hence,
  $\desc(G)$ will be of size $2\log n! =O(\log |A_n|)$. We can now
  apply Lemma~\ref{naive} with $H=S_n$, $K=A_n$ and $N=\{1\}$ to
  complete the proof.\qed
\end{proof}

We now consider the finite simple groups of Lie type. It is convenient
to deal with them in two categories: classical and exceptional. We
will follow the (concrete) constructions of these groups as
explained in Wilson's text \cite{Wilson-book}.

\subsubsection{Classical Lie type}

We begin by describing a useful algorithm for efficiently enumerating
elements of the finite matrix group $\GL_n(q)$.

\begin{lemma}\label{desc-glnq}
  There is an algorithm that takes as input the pair $(n,q)$ encoded
  in binary, where $q$ is a prime power, and enumerates all elements
  of the group $\GL_n(q)$ of $n\times n$ invertible matrices over the
  field $\F_q$ in time polynomial in $|\GL_n(q)|$.
\end{lemma}

\begin{proof}
  We first recall construction of the finite field $\F_q$ given $q$ in
  binary representation. The text \cite[Chapter 25.4]{vzg-book}
  contains more details. Let $q=p^k$ for a prime $p$. Then $\F_q$ is a
  degree-$k$ extension of the prime field $\F_p$. The elements of
  $\F_p$ are essentially the residues $\{0,1,\ldots,p-1\}$ modulo the
  prime $p$, each of which can be represented in binary with at most
  $\log p$ bits. Field arithmetic in $\F_p$ is addition and
  multiplication modulo $p$. These operations can be performed using
  $O((\log p)^2)$ bit operations. The finite field $\F_q$ is described
  by a monic irreducible polynomial $g(y)\in\F_p[y]$ of degree
  $k$. Such a polynomial $g(y)$ can be found by looking through all
  the $p^k=q$ many monic degree-$k$ polynomials in $\F_p[y]$ and
  testing each for irreducibility until we find one. This algorithm is
  correct as irreducible polynomials are guaranteed to exist, and it
  has polynomial in $\log q$ running time (see \cite[Chapter
    12]{vzg-book} for details). Now, the elements of $\F_q$ can be
  represented as degree at most $k-1$ polynomials over $\F_p$ (the
  residues of polynomials in $\F_p[y]$ modulo $g(y)$). Clearly, the
  size of $g(y)=\sum_{i=0}^k \alpha_i x^i$ in binary is $O(k\log
  p)=O(\log q)$ and arithmetic operations in $\F_q$ is essentially
  addition and multiplication of degree $k-1$ polynomials in $\F_p[y]$
  modulo $g(y)$. Each such operation can be performed in $O((\log q)^3)$
  time.

  The elements of the group $\GL_n(q)$ consists of $n\times n$
  invertible matrices over $\F_q$. Each such matrix can be identified
  with the $n$-tuple of its linearly independent column vectors
  $(v_1,v_2,\ldots,v_n)$ in $\F_q$. The order of the group $\GL_n(q)$
  is given by
  \[
    |\GL_n(q)| = (q^n-1)\cdot (q^n-q)\cdots (q^n-q^i)\cdots (q^n-q^{n-1}),
  \]
  where the $(i+1)^{st}$ term $q^n-q^i$ in the product, $i\le n-1$, is
  the number of choices for the vector $v_{i+1}$ such that it is
  independent of $\{v_1,\ldots,v_i\}$. Hence, the set $L_i$ of all
  $i$-tuples $(v_1,v_2,\ldots,v_i)$ of linearly independent vectors in
  $\F_q^n$ is of cardinality $|L_i|=\prod_{j=0}^i(q^n-q^j)$, which is
  upper bounded by $|\GL_n(q)|$. The enumeration algorithm takes as
  input the pair $(n,q)$ encoded in binary (which is of size $O(\log
  n+\log q)$). It will compute the sets $L_i$ for increasing values of
  $i, 1\le i\le n$, until finally the set $L_n$ yields a listing of
  all elements of $\GL_n(q)$. To begin with, $L_1$ consists of all the
  $q^n-1$ many nonzero vectors in $\F_q^n$. Suppose now, in the
  $i^{th}$ stage, the algorithm has computed the list $L_i$. To
  compute $L_{i+1}$ the algorithm will cycle through all tuples
  $(v_1,v_2,\ldots,v_i)\in L_i$. For each such tuple it will examine
  all nonzero vectors $v\in \F_q^n$ one by one performing a linear
  independence test. If the vector $v\in \F_q^n$ is linearly
  independent of $(v_1,v_2,\ldots,v_i)$ the algorithm includes the
  $(i+1)$-tuple $(v_1,v_2,\ldots,v_i,v)$ in $L_{i+1}$. At the end of
  this stage the algorithm would have computed the set $L_{i+1}$. The
  time taken for this stage is clearly bounded by $|L_i|\cdot q^n$
  many linear independence tests, that is, checking if $v$ is
  independent of $\{v_1,v_2,\ldots,v_i\}$. Using Gaussian elimination
  \cite[Section 3.3]{schrijver-book} independence test can be done
  with $O(n^3)$ many field arithmetic operations.

  To complete the running time analysis notice that, as $q\ge 2$ and
  $n\ge 1$, $|\GL_n(q)|\ge q^n/2$. Thus, the total number of field
  arithmetic operations required over all $n$ stages is bounded by
  $O(n\cdot |\GL_n(q)|\cdot q^n)=O(n|\GL_n(q)|^2)$. As already noted,
  each field operation in $\F_q$ can be performed with $O((\log q)^3)$
  many bit operations. This completes the proof.
\end{proof}

\begin{lemma}\label{classical}
  For each finite simple group $G$ that is of classical Lie type, there
  is a description $\desc(G)$ of $O(\log |G|)$ size such that the
  multiplication table for $G$ can be computed from $\desc(G)$
  in time polynomial in $|G|$.
\end{lemma}

\begin{proof}
  These finite simple groups are categorized below, following Wilson's
  text \cite[Chapter 3]{Wilson-book}.
  \begin{enumerate}
  \item $\PSL_n(q)$ for either $n>2$ or $q>3$. These are the projective
    special linear group arising from the matrix group $\GL_n(q)$ (the
    finite group of $n\times n$ invertible matrices over the finite
    field $\F_q$).
  \item $\PSp_{2m}(q)$ which is the projective symplectic group,
    obtained as quotient of the symplectic group $\Sp_{2m}(q)$, and
    where $\Sp_{2m}(q)$ itself is a subgroup of $\GL_{2m}(q)$.
  \item $\PSU_n(q)$ which is the projective unitary group, obtained as
    a quotient of the special unitary group $\SU_n(q)$. The general
    unitary group $\GU_n(q)$ itself is a subgroup of the matrix group
    $\GL_n(q^2)$.
  \item Finally, the orthogonal groups which are also obtained
    essentially as quotients of matrix groups, but there are different
    subcases depending on both $n$ and $q$.
  \end{enumerate}

  Consider $\PSL_n(q)$. The special linear group
  $$\SL_n(q)=\{A\in\GL_n(q)\mid \det(A)=1\}$$ is a subgroup of
  $\GL_n(q)$ of index $q-1$. $\PSL_n(q)$ is obtained as the quotient
  $\SL_n(q)/Z$ where $Z$ consists of all ``scalar matrices''
  $\lambda I_n$ of determinant $1$. Thus,
  $|\PSL_n(q)|\ge |\GL_n(q)|/q^2$. We can apply Lemma~\ref{naive},
  letting $H=\GL_n(q)$, $K=\SL_n(q)$ and $N=Z$ in it.  By
  Lemma~\ref{desc-glnq}, the group $\GL_n(q)$ has a description of
  size $O(\log n + \log q)$. With that description the algorithm of
  Lemma~\ref{desc-glnq} is an algorithm will enumerate all elements of
  $\GL_n(q)$ in time polynomial in $|\GL_n(q)|$. Now, applying
  Lemma~\ref{naive} we can obtain the desired description for
  $\PSL_n(q)$.

  Next, consider $\PSp_{2m}(q)$. Recall that the symplectic group
  $\Sp_{2m}(q)$ is the subgroup of $\GL_{2m}(q)$ consisting of all
  \emph{isometries} of a non-singular alternating bilinear form on the
  vector space $\F_q^{2m}$. It turns out that $|\Sp_{2m}(q)|>q^{m^2}$.
  As $|\GL_{2m}(q)|=q^{4m^2}$, clearly
  \[
    |\GL_{2m}(q)|\le |\Sp_{2m}(q)|^4.
  \]
  Thus, the description of the group $\Sp_{2m}(q)$ is easy to
  construct.  Given $2m$ and $q$ in binary, in time polynomial in $m$
  and $q$ we can define a non-singular alternating bilinear form
  $\langle,\rangle$ on $\F_q^{2m}$. Then, as done for the previous case, we
  can enumerate the elements of $\GL_{2m}(q)$, one by one, and put each
  into $\Sp_{2m}(q)$ if it is an isometry for $\langle,\rangle$. Now, the
  simple group $\PSp_{2m}(q)$ by quotienting out the subgroup of scalar
  matrices (which has size at most $2$). Clearly, Lemma~\ref{naive}
  can be applied to obtain the desired description for $\PSp_{2m}(q)$
  along with the algorithm for obtaining its multiplication table.

  For the next case $\PSU_n(q)$, we start with the group $\GU_n(q)$
  which is the subgroup of $\GL_n(q^2)$ that preserves the
  sesquilinear form $\angle{x,y}=\sum_{i=1}^n x_i y_i^q$. From there
  on the process of obtaining $\SU_n(q)$ and then $\PSU_n(q)$ is
  exactly like in the case of $\PSL_n(q)$. In order to obtain the
  desired description for $\PSU_n(q)$ it suffices to generate
  $\GU_n(q)$ in time polynomial in $|\PSU_n(q)|$ given $n$ and
  $q$. Clearly, the entire group $\GU_n(q)$ can be listed down in time
  polynomial in $|\GL_n(q^2)|$ by enumerating, one by one, each
  $n\times n$ invertible matrix $M$ over $\F_{q^2}$ and checking if
  $M$ is in $\GU_n(q)$ (by checking if it preserves the sesquilinear
  form $\angle{x,y}$ defined above). However,
  $|\GL_n(q^2)| < |\GU_n(q)|^3$. Thus, the brute-force algorithm for
  generating all of $\GL_n(q^2)$ by looking through all $n\times n$
  invertible matrices over $\F_q$ and picking those which preserve the
  sesquilinear form will again work. Finally, the group $\PSU_n(q)$ is
  only smaller than $\GU_n(q)$ by a small power of $q$.

  The fourth category are the simple groups arising from the
  orthogonal groups. There are different cases
  to consider here.

  When $q$ is odd, there are exactly two non-singular $n$-variate
  symmetric bilinear forms over $\F_q$. If $f$ is one of them then
  $\GO_n(f)$ is the subgroup of $\GL_n(q)$ that leaves $f$ invariant.
  If $n=2m+1$ is odd, it turns out that there is only one orthogonal
  group which we may denote by $\GO_{2m+1}(q)$. We can then construct
  $\SO_{2m+1}(q)$ and $\PSO_{2m+1}(q)$ analogous to the earlier cases.
  But there is a further subgroup of index $2$ in $\PSO_{2m+1}(q)$
  denoted $\POm_{2m+1}(q)$. As before, we can apply
  Lemma~\ref{naive} to obtain the desired description of
  $\POm_{2m+1}(q)$ from which the group's multiplication table can
  be obtained in polynomial time.

  For $n=2m$, there are two orthogonal groups $\GO^+_{2m}(q)$ and
  $\GO^-_{2m}(q)$. As in the previous case we can obtain the groups
  $\POm^\epsilon_{2m}(q)$ for $\epsilon\in\{+,-1\}$. For odd $q$
  these are always simple and Lemma~\ref{naive} is again easily
  applicable.

  The characteristic $2$ case of orthogonal groups requires different
  techniques. However, Lemma~\ref{naive} is still applicable in
  defining $\desc(G)$ for the simple groups $G$ arising in this
  case. We omit the details.\qed
\end{proof}

\subsubsection{Exceptional Lie type}

We will closely follow Wilson's exposition \cite[Chapter 4]{Wilson-book}.
These finite simple groups come in ten families:
  \begin{itemize}
  \item The five families of Chevalley groups $G_2(q)$, $F_4(q)$, and $E_n(q),
    n\in\{6,7,8\}$.
  \item The two families of Steinberg--Tits--Herzig groups ${}^3{D}_4(q)$ and
${}^2{E}_6(q)$.
  \item The Suzuki and Ree groups $\Sz(2^{2n+1})$, $R_1(3^{2n+1})$ and $R_2(2^{2n+1})$ (also known as ${}^2B_2(q)$, ${}^2G_2(q)$ and ${}^2F_4(q)$ for the
appropriate $q$).
  \end{itemize}

  Unlike the classical case, these families have only one parameter
  which is the field size $q$. It turns out that the following lemma
  (analogous to Lemma~\ref{naive}) captures how these simple groups
  arise and suggest a suitable $\desc(G)$ for each $G$ along with
  the corresponding algorithm for recovering $G$ from $\desc(G)$.

  \begin{lemma}\label{except}
    Let $G$ be finite simple group from any of the ten families listed
    above. Then we can define a description $\desc(G)$ of size $O(\log q)$
    such that $G$'s multiplication table can be obtained from it in
    time polynomial in $|G|$.
  \end{lemma}

  \begin{proof}
    We will briefly describe how the groups are constructed for
    each of these ten families. The easy descriptions and algorithms
    for generating their multiplication tables will suggest themselves.
    \begin{enumerate}
    \item $\Sz(2^{2n+1})={}^2B_2(2^{2n+1})$: ~ Let $q=2^{2n+1}$. The Suzuki group
      $\Sz(q)$ (see \cite[Chapter 4.2]{Wilson-book}) can be considered as a
      subgroup of the group $\GL_4(q)$. As $\Sz(q)$ is simple, it can
      be generated with just two generators \cite{DMT}. Since
      $|\GL_4(q)|\le q^{16}$, we can describe these generators $g_1$
      and $g_2$ as two indices $i_1$ and $i_2$, respectively, each of
      size $16\log q$ bits. Define $\desc(\Sz(q))$ as the triple
      $(i_1,i_2,q)$ which is of $O(\log q)$ size with the indices and
      $q$ encoded in binary. The algorithm for recovering the
      multiplication table of $\Sz(q)$ in time polynomial in $|\Sz(q)|$
      is as follows: it first enumerates the elements of
      $\GL_4(q)$. The elements $M_1, M_2\in \GL_4(q)$ at the
      $i_1^{th}$ and $i_2^{th}$ positions in the enumeration generate
      $\Sz(q)$.  Starting with $M_1$ and $M_2$ we generate all elements
      of $\Sz(q)$ in at most $|\Sz(q)|$ rounds.  Let $S_0=\{M_1,M_2\}$.
      Inductively, suppose after $j$ rounds we have computed
      $S_j\subseteq \Sz(q)$.  Set $S_{j+1}=\{M\cdot M_i\mid i\in\{1,2\}
      \textrm{ and } M\in S_j\}$. If $S_{j+1}=S_j$ the algorithm
      outputs $S_j$ and stops. As $\angle{M_1,M_2}=\Sz(q)$ the output
      $S_j$ is $\Sz(q)$. This algorithm actually enumerates $\Sz(q)$ as
      a subset of $\GL_4(q)$ in time polynomial in $q$, and its
      multiplication table can be easily computed from this
      enumeration.
    \item $G_2(q)$:~ It turns out that $G_2(q)$ (see \cite[Chapter
      4.3]{Wilson-book}) arises as the automorphism group of the
      octonion algebra over $\F_q$. The entire octonion algebra can be
      listed in time $O(q^8)$. As the automorphisms fix the identity
      in the octonion algebra, $G_2(q)$ is essentially a subgroup of
      $\GO_7(q)$. We can enumerate $\GO_7(q)$ and pick out the
      automorphisms which will give us $G_2(q)$. As $|G_2(q)|$ is
      $q^6(q^6-1)(q^2-1)$, this will be polynomial time in the group
      size.  We can define the $\desc(G_2(q))$ of size $O(\log q)$. It
      is essentially just the encoding of $q$.
    \item $R_1(3^{2n+1})={}^2G_2(3^{2n+1})$:~This can be handled quite
     analogously to $G_2(q)$ \cite[Section 4.5]{Wilson-book}.
    \item ${}^3{D}_4(q)$:~ This group (see \cite[Chapter
      4.6]{Wilson-book}) arises as centralizer of a certain order $3$
      automorphism of $\POm^+_8(q^3)$. Because of the special nature
      of this automorphism (a product of a field automorphism and the
      triality automorphism), it has a short description of size
      $O(\log q)$. As $\POm^+_8(q^3)$ can be enumerated in time
      polynomial in $q$, we can pick out the centralizer elements in
      time polynomial in $q$.
    \item $F_4(q)$:~ when $q$ is not a power of $2$ or $3$, $F_4(q)$
      arises as automorphisms of the finite (non-associative) Albert
      algebra which is $27$-dimensional \cite[Chapter
        4.8]{Wilson-book}. Thus, $F_4(q)$ can be picked out from
      $\GL_{27}(q)$ by enumerating it and checking each element for
      being an automorphism. The finite Albert algebra can be defined
      in $O(\log q)$ size by defining the multiplication of its basis
      elements.
    \item $R_2(2^{2n+1})={}^2F_4(2^{2n+1})$:~ This case is quite similar to
      the previous one \cite[Section 4.9]{Wilson-book}.
    \item $E_6(q)$:~ This group can be defined as automorphisms of a
      certain trilinear form over the Albert algebra \cite[Chapter
        4.10]{Wilson-book}. After quotienting out the scalar subgroup
      from this automorphism group we will obtain $E_6(q)$. The same
      algorithm of enumerating the bigger group and selecting the
      automorphisms and then quotienting out the scalars will work.
    \item $E_7(q)$ and $E_8(q)$:~ Both these arise as automorphisms of
      a finite Lie algebra \cite[Chapter 4.1  non-zero vector $\bar{u}\in\Span(\bar{x}_1,\bar{y}_1)$.
2]{Wilson-book}. For
      $E_8(q)$ we have to consider a $248$-dimensional Lie
      algebra. Thus we can look for $E_8(q)$ inside $\GL_{248}(q)$ by
      enumerating all its elements and picking out the
      automorphisms. All we need is that from a $O(\log q)$ size
      description we should be able to obtain the Lie algebra. For
      that we only need to express the multiplication of basis
      elements as an $\F_q$ linear combination of the basis (which is
      of size $248$). $E_7(q)$ is similarly dealt with as
      automorphisms of a smaller dimensional Lie algebra.\qed
    \end{enumerate}

  \end{proof}

  The above lemmas immediately imply the following.

  \begin{theorem}\label{desc-simple}
    There is an algorithm that, given as input $\desc(G)$ for a
    finite simple group $G$, outputs its multiplication table
    in time polynomial in $|G|$.
   \end{theorem}

\subsection{Short descriptions for finite groups}
We first show that every finite group $G$ has a description $\desc(G)$
of bit-length $O((\log|G|)^3)$.  Then we describe the algorithm that
given as input a bit string $y$, if $y=\desc(G)$ for some finite group
$G$ then the algorithm runs in time $2^{O((\log|G|)^3)}$ and outputs the
multiplication table of a group $G'$ isomorphic to $G$.

Building on the construction of $\desc(G)$ for finite simple groups
$G$, described in Section~\ref{cfsg-sec}, we obtain $\desc(G)$ for all
finite groups with a straightforward construction that uses a
composition series for $G$. It is convenient to use
Erd\H{o}s-R\'{e}nyi cube generating sequences \cite{ER}.

\begin{defn}[Erd\H{o}s-R\'{e}nyi cube generating sequences]
  Let $G$ be a finite group. A \emph{cube generating sequence}
  (abbreviated as c.g.s.) for $G$ is a sequence $(g_1,g_2,\ldots,g_k)$
  of elements from $G$ such that
  \[
  G=\{g_1^{\epsilon_1}g_2^{\epsilon_2}\cdots g_k^{\epsilon_k}\mid \epsilon_i\in\{0,1\}
  \hbox{ for each } i\}.
  \]
Thus, given the cube generating sequence $(g_1,g_2,\ldots,g_k)$, each
element of $G$ can be defined by the bit vector
$(\epsilon_1,\epsilon_2,\ldots,\epsilon_k)$. We note that this bit
vector need not be unique for a group element.
\end{defn}

\begin{theorem}\label{ER}{\rm\cite{ER}}
Every finite group $G$ has a cube generating sequence of size at most $2\log
|G|$.
\end{theorem}

Let $X=(g_1,g_2,\ldots,g_k)$ be cube generating sequence for a group
$G$. Its length $k$ is denoted $|X|$. Abusing notation, by $g\in X$ we
will mean that $g=g_i$ for some $i$.

We note that cube generating sequences can be computed in polynomial
time.

\begin{lemma}\label{cgs-find}
  There is a polynomial-time algorithm that takes a group $G$ by its
  multiplication table as input and outputs a cube-generating sequence
  of length at most $2\log |G|$ for $G$.
\end{lemma}

\begin{proof}
  The simple algorithm is directly based on a ``cube doubling'' trick
  due to Babai and Szemer\'{e}di \cite{BSz84}. Suppose we have picked
  the sequence $(g_1,g_2,\ldots,g_i)$ of group elements from $G$. Let
  $C_i$ be the cube it generates. That is,
\[
C_i=\{g_1^{\epsilon_1}g_2^{\epsilon_2}\cdots g_i^{\epsilon_i}\mid \epsilon_j\in\{0,1\}
\hbox{ for } 1\le j \le i\}.
\]
For any $g\in G$ let $C_ig=\{xg\mid x\in C_i\}$. As $|C_i|\le 2^i$ and
each element in $C_i$ can be computed with $i-1$ group
multiplications, the set $C_i$ can be enumerated with at most
$(i-1)2^i$ many group multiplications. We can enumerate $C_ig$ with
$2^i$ additional group multiplications.

Now, if $C_ig\cap C_i\ne \emptyset$ for $g\in G$ then $g$ is in the
cube generated by the sequence
$(g_i^{-1},g_{i-1}^{-1},\ldots,g_1^{-1},g_1,\ldots,g_i)$. Thus, if for all
$g\in G$ we have $C_ig\cap C_i\ne \emptyset$ then we have a cube generating
sequence of length $2i$. Otherwise, there is a $g_{i+1}\in G$ such
that
\[
C_ig_{i+1}\cap C_i = \emptyset,
\]
and the cube $C_{i+1}$ generated by $(g_1,g_2,\ldots,g_{i+1})$ is
twice the cardinality of $C_i$ by a simple counting argument. Clearly,
we can find $g_{i+1}$ in time polynomial in $|G|$ by searching through
$G$.

As $|C_{i+1}|=2\cdot |C_i|$, it follows that this procedure will
eventually find a cube generating sequence $C_k$ for some
$k\le 2\log|G|$ with at most polynomial in $|G|$ group multiplications
and hence in time polynomial in $|G|$.  \qed
\end{proof}

We now explain the construction of $\desc(G)$ for a finite group $G$,
where $G$ is given by its multiplication table.  Compute a composition
series for $G$ (which can be done in time polynomial in $|G|$ by
computing normal closures of elements):
  \[
1=N_0\lhd N_1\lhd \cdots \lhd N_r=G.
\]
For each quotient group $H_i=N_i/N_{i-1}$ for $1\le i\le r$, we can
compute a cube generating sequence
$Y_i=(N_{i-1}g_{i,1},N_{i-1}g_{i,2},\ldots,N_{i-1}g_{i,k_i})$ of length
$k_i\le 2\log |H_i|$ in polynomial time, by Lemma~\ref{cgs-find}.  For
each $i$, let $X_i=(g_{i,1},g_{i,2},\ldots,g_{i,k_i})$ be the
corresponding sequence of coset representatives.

\begin{prop}
  The sequence $X=(X_1, X_2, \cdots, X_i)$ obtained by concatenating
  the sequences $X_j, 1\le j\le i$, defined above, in that order is a
  cube generating sequence for the subgroup $N_i$ of length at most
  $\sum_{j=1}^i 2\log |H_j|=2\log |N_i|$, for $1\le i\le r$. In
  particular, $X=(X_1, X_2, \cdots, X_r)$ is a cube generating
  sequence for $G$ of length at most $2\log |G|$.
\end{prop}

\begin{proof}

  Suppose $H\lhd G$ and $Y=\{y_1,y_2,\ldots,y_\ell\}$ is a cube
  generating sequence for $H$ and $Z=\{Hz_1,Hz_2,\ldots,Hz_m\}$ is a
  cube generating sequence for $G/H$. We note that any element
  $Hg\in G/H$ can be expressed as
\[
Hg =  Hz_1^{\epsilon_1}z_2^{\epsilon_2}\cdots z_m^{\epsilon_m},
  \epsilon_i\in\{0,1\},
\]
which implies that any element of $G$ can be expressed as
\[y_1^{\delta_1}y_2^{\delta_2}\cdots
y_\ell^{\delta_\ell}z_1^{\epsilon_1}z_2^{\epsilon_2}\cdots
z_m^{\epsilon_m},\, \epsilon_i,\delta_j\in\{0,1\}.\]
Now, building on this, an easy inductive argument yields the proposition.\qed
\end{proof}

For $1\le i\le r$, let $Y_i=(X_1,X_2,\ldots,X_i)$. By the above
proposition, $Y_i$ is a cube generating sequence for the group $N_i$
of length at most $2\log |N_i|$. As $N_{i-1}\lhd N_i$, for each
$x\in X_i$ we have $xN_{i-1}=N_{i-1}x$, $1<i\le r$. In particular, for
$y\in Y_i$ there is a unique element $w(x,y)\in N_{i-1}$ such that
we have
\begin{equation}\label{eqn-rel}
xy=w(x,y)x,
\end{equation}
where $w(x,y)$ is a word over the cube generating sequence
$Y_{i-1}$. Thus, $w(x,y)$ is a word of length at most $2\log
|N_{i-1}|$, expressed using the c.g.s.\ $Y_{i-1}$. Let $R_i$ consist
of the set of all such equations defined by Equation~(\ref{eqn-rel})
for $y\in Y_{i-1}$ and $x\in X_i$.  Notice that $R_i$ has
$|Y_{i-1}|\cdot |X_i|$ equations in it and each word $w(x,y)$ over
$Y_{i-1}$ is defined by the bit vector of exponents in the cube
product over $Y_{i-1}$. Thus, the size of $R_i$ is
\[
\size(R_i) =
O(|Y_{i-1}|\cdot |X_i|\cdot |Y_{i-1}|)=O((\log |N_{i-1}|)^2\cdot \log
|H_i|).
\]

We are now ready to define $\desc(G)$ for the finite group $G$. It is
defined as an efficient\footnote{We can encode an $r$-tuple of
  $3$-tuples in binary by using some standard encoding which is
  polynomial time computable and its inverse is also polynomial time
  computable.}
  \emph{binary encoding} of the $r$-tuple
defined below
\[
\desc(G) ~=~ [(\desc(H_1),X_1,R_1),(\desc(H_2),X_2,R_2),\ldots,(\desc(H_r),X_r,R_r)],
\]
where $R_1$ is the empty set. The size of $\desc(G)$ is clearly
\[
O(\sum_{i=1}^r \log~|H_i|) + O(\sum_{i=1}^r \size(X_i)) + O(\sum_{i=1}^r \size(R_i)),
\]
where $\size(X_i)$ and $\size(R_i)$ denote the bit length of their
encodings described above. Notice that
$\sum_{i=1}^r\log~|H_i| =\log |G|$. It follows that
$\sum_{i=1}^r\size(X_i)\le O((\log|G|)^2)$, because
$|X_i|\le 2\log |H_i|\le 2\log |G|$ and each element of $X_i$ requires
at most $\log |H_i|$ bits to encode it. We now bound
$\sum_{i=1}^r \size(R_i)$.

\begin{eqnarray*}
  \sum_{i=1}^r \size(R_i) &=& O(\sum_{i=1}^r (\log^2|N_{i-1}|\cdot\log |H_i|)\\
  &\le& \sum_{i=1}^rO( \log^2|G|)\cdot\log~|H_i|\\
  &=& O((\log|G|i)^2)\cdot \sum_{i=1}^r \log~|H_i|\\
  &=& O((\log^3|G|)^2).
\end{eqnarray*}

The above discussion has the following immediate consequence.

\begin{lemma}
  Given as input the multiplication table of a finite group $G$, there
  is an algorithm that computes in time polynomial in $|G|$ its
  description $\desc(G)$.
\end{lemma}

However, we are interested in the converse statement, about generating
$G$ from its description $\desc(G)$ in time polynomial in $|G|$.  The
next theorem, which shows this, will allow us to enumerate the
multiplication tables of all groups of order $n$ up to isomorphism in
time $2^{O((\log n)^3)}$.

\begin{theorem}\label{list-groups}
  There is an algorithm that given as input $\desc(G)$, for a finite
  group $G$, outputs its multiplication table in time polynomial in
  $|G|$. As a consequence, there is an algorithm that outputs the list
  of multiplication tables of all distinct groups of order $n$ in time
  $2^{O((\log n)^3)}$.
\end{theorem}

\begin{proof}
  As argued above, every finite group $G$ has a description
  $\desc(G) =
  [(\desc(H_1),X_1,R_1),(\desc(H_2),X_2,R_2),\ldots,(\desc(H_r),X_r,R_r)]$
  where the $r$-tuple is encoded in a binary string of length bounded
  by $c\cdot \log^3|G|$ for some constant $c>0$ (which is independent
  of $G$).

  Let $w$ be a binary string. If it encodes $\desc(G)$ for some group
  $G$ such that $|G|=n$, then
\begin{itemize}
\item  $|w|\le c\log^3 n$ and $w$ decodes to an $r$-tuple of $3$-tuples
 \[[(\desc(H_1),X_1,R_1),(\desc(H_2),X_2,R_2),\ldots,(\desc(H_r),X_r,R_r)].\]
\item Moreover, for some composition series
  $1=N_0\lhd N_1\lhd \cdots \lhd N_r=G$ of $G$,
  $\desc(N_i)$ is the binary encoding of the $i$-tuple
  \[
    \desc(N_i) =
    [(\desc(H_1),X_1,R_1),(\desc(H_2),X_2,R_2),\ldots,(\desc(H_i),X_i,R_i)],
  \]
  for $1\le i\le r$, where $H_i=N_i/N_{i-1}$.
\end{itemize}

  The algorithm will compute the multiplication table for $N_i$,
  inductively for increasing values of $i, 1\le i\le r$. For $i=1$,
  it can be obtained from $\desc(H_1)$ by Theorem~\ref{desc-simple}.

  Given the multiplication table for $N_{i-1}$ and the triple
  $(\desc(H_i),X_i,R_i)$ we first obtain $H_i$ by
  Theorem~\ref{desc-simple}, and the actual elements of the c.g.s.\
  $X_i=(x_1,x_2,\ldots,x_\ell)$ for $\ell\le 2\log |H_i|$. The c.g.s.\
  for $N_{i-1}$ is $Y_i=(X_1,X_2,\ldots,X_{i-1})=(y_1,y_2,\ldots,y_k)$
  for some $k\le 2\log |N_{i-1}|$. Applying Equation~(\ref{eqn-rel}) we
  have as part of $R_i$
  \begin{equation}\label{eqn-rel1}
    x_\mu y_\nu=w(x_\mu,y_\nu)x_\mu,
  \end{equation}
  for each $\mu\in [\ell]$ and each $\nu\in [k]$. Here, note that
  $w(x_\mu,y_\nu)$ is expressed as a product $y_1^{e_1}y_2^{e_2}\cdots
  y_k^{e_k}$ for $(e_1,e_2,\ldots,e_k) \in \{0,1\}^k$.

  We now explain how to obtain the multiplication table for the
  subgroup $N_i$ from the above data in time polynomial in $|N_i|$.
  More precisely, the data comprises of the multiplication table for
  $N_{i-1}$, the multiplication table for the quotient group
  $H_i=N_i/N_{i-1}$ and $R_i$ (Equation~(\ref{eqn-rel1}) for all $y_\nu$
  and $x_\mu$).

We first note that $N_i = \cup_x N_{i-1}x$, where $x$ is in the cube
\[C=\{\prod_{i=1}^\ell x_j^{\varepsilon_j}\mid \varepsilon_j\in
\{0,1\}\textrm{ and } x_j\in X_i\}.\]
Thus, $N_i=N_{i-1}C$. So, any element in $N_i$ is of the
form $ab, a\in N_{i-1}, b\in C$. We need to express the product of any
two elements $a_1b_1, a_2b_2\in N_i$, where $a_1,a_2\in N_{i-1}$ and
$b_1,b_2\in C$ in the form $a_3b_3, a_3\in N_{i-1}, b_3\in C$.

\begin{claim}
  For any $x_\mu\in X_i$ and $a\in N_{i-1}$ we can find $a'\in N_{i-1}$
  such that $x_\mu a = a' x_\mu$ in time polynomial in $|N_{i-1}|$.
\end{claim}

\begin{poc}
We first express $a$ using the c.g.s.\ $Y_i$ for
  $N_{i-1}$ to write it as
  \[
  a=y_1^{\beta_1}y_2^{\beta_2}\cdots y_\ell^{\beta_\ell}.
  \]
  Then we can use Equation~\eqref{eqn-rel1} $\ell$ times to write $x_\mu
  a = x_\mu \prod_{j=1}^\ell y_j^{\beta_j}$ as the product $x_\mu a =
  (\prod_{j=1}^\ell w(x_\mu,y_j)^{\beta_j})x_\mu$. We can compute $a'=
  \prod_{j=1}^\ell w(x_\mu,y_j)^{\beta_j}$ using the multiplication
  table for $N_{i-1}$ as each $w(x_\mu,y_j)$ is in $N_{i-1}$.
\end{poc}

Using the above claim repeatedly we can express products of the form
$x a$ for $x\in C, a\in N_{i-1}$ as $a' x$ for some $a'\in
N_{i-1}$. Finally, we can express the product of any two elements
$a_1b_1, a_2b_2\in N_i$, where $a_1,a_2\in N_{i-1}$ and $b_1,b_2\in C$
in the form $a_3b_3, a_3\in N_{i-1}, b_3\in C$.  The algorithm runs in
time polynomial in $|N_{i-1}|$ and $|X_i|$ and hence in time
polynomial in $|N_i|$.

Applied for increasing values of $i$, finally, for $i=r$ we obtain the
table for $N_r=G$. The overall algorithm clearly runs in polynomial in
$|G|$ time.

The algorithm for enumerating all order $n$ group multiplication
tables is easy to obtain from the above. We run through all strings
$w$ of length at most $c\log^3 n$, where $n=|G|$, and apply the above algorithm
to generate the multiplication for $G$ if $w$ encodes $\desc(G)$.

Notice that this enumeration may contain duplicates because the same
group can have different composition series and hence different
descriptions. We can eliminate the duplicates by running Miller's
group isomorphism test \cite{Mi78}. Each such test takes $n^{\log
  n}$. Since there are at most $2^{\log^3 n}$ groups being listed the
running time to detect multiplicities takes time at most $2^{\log^3
  n}\cdot n^{\log n}\cdot 2^{\log^3 n}=2^{O((\log n)^3)}$.\qed
\end{proof}

\begin{remark}
Some notes concerning the preceding results:
\begin{enumerate}
\item Computing a composition series for $G$ can be done in
  time polynomial in $|G|$ by computing normal closures of elements:
  \[
1=N_0\rhd N_1\rhd \cdots \rhd N_r=G.
\]
More precisely, for each $g\in G$ we compute its normal closure by
starting with the cyclic subgroup $N=\angle{g}$ and repeatedly
replacing it with the (larger) subgroup generated by the set
$\{xyx^{-1}\mid x\in G, y\in N\}$ until $N$ does not grow any further.
By construction, the final $N$ is a normal subgroup of $G$ and its
computation takes time polynomial in $|G|$. If $G$ is not simple, then
for some $g\in G$ this construction will yield a non-trivial normal
subgroup $N$. We can now recursively find composition series for $N$
and $G/N$ (we have multiplication tables for both) and put them
together to obtain a composition series for $G$ in time polynomial in
$|G|$.

%\item Let $H_i=N_i/N_{i-1}$ for $1\le i\le r$ be the simple quotient
%groups. Let $n_i=|H_i|$ for $1\le i\le r$ denote their respective
%orders.

\item For each group $N_i$ we compute a cube generating sequence $X_i$
  of size at most $k_i\le 2\log |N_i|$. This computation is done in
  time polynomial in $|N_i|$, by Lemma~\ref{cgs-find}. As part of the
  description  of $G$, for each $1\le i\le r$ we obtain a set of
  relations $R_i$ as defined in Equation~(\ref{eqn-rel}).
\end{enumerate}

Our algorithm described in Theorem~\ref{list-groups} is broadly based
on a result of McIver and Neumann \cite{MN87} that bounds the number
of $n$ element groups by $2^{O((\log n)^3)}$. There is related work by
Babai \textit{et al.}~\cite{Bab} which shows that all simple groups of
$n$ elements (except one family called the Ree groups) have
generator-relator presentations of size $O((\log n)^2)$.
\end{remark}

\begin{theorem}\label{quasipoly-thm}
There is a $2^{O((\log n)^3)}$ time algorithm that recognizes if a given
$n$-vertex graph is the commuting graph of a group of order $n$.
\end{theorem}

\begin{proof}
  Let $X$ be an $n$-vertex graph. The algorithm cycles through all
  multiplication tables of distinct groups of order $n$, listed by the
  $2^{O((\log n)^3)}$ time algorithm described in
  Theorem~\ref{list-groups}. For each group $G$ that is listed we can
  compute its commuting graph $\Gamma(G)$ in polynomial time.  We can
  test if $\Gamma(G)$ is isomorphic to the input graph $X$ using
  Babai's $2^{O((\log n)^3)}$ time algorithm \cite{B16}.

Thus, the overall computation takes $2^{O((\log n)^3)}$ time.\qed
\end{proof}

The algorithm of Theorem~\ref{quasipoly-thm} does not reveal any new
relationship between the group and its commuting graph. It would be
more interesting to obtain an algorithm that exploits the commuting
graph structure.

\section{Isoclinism of groups and commuting graphs}\label{sec-isoclin}

Motivated by the polynomial-time algorithm for recognizing commuting
graphs of extraspecial groups, in this section we briefly discuss
isoclinism of groups and its connection to commuting graphs.

The \emph{lexicographic product} of graphs $\Gamma_1$ and $\Gamma_2$ is the
graph on the vertex set $V(\Gamma_1)\times V(\Gamma_2)$, in which $(v_1,w_1)$
and $(v_2,w_2)$ are joined if and only if either $v_1$ and $v_2$ are joined in
$\Gamma_1$, or $v_1=v_2$ and $w_1$ and $w_2$ are joined in $\Gamma_2$. If
$\Gamma_2$ is a complete graph, then the lexicographic product is obtained
from $\Gamma_1$ by blowing up each vertex to a complete graph on
$|V\Gamma_2|$ vertices.

Commutation in a group $G$ is a map from $G\times G$ to the derived group $G'$;
multiplying the arguments by elements of $Z(G)$ does not change the function
value, so commutation induces a map from $G/Z(G)\times G/Z(G)$ to $G'$.
We say that groups $G$ and $H$ are \emph{isoclinic} if there are isomorphisms
$\theta:G/Z(G)\to H/Z(H)$ and $\phi:G'\to H'$ which intertwine the commutation
map; that is, for all $g_1,g_2\in G$,
\[[\theta(g_1Z(G)),\theta(g_2Z(G))]=\phi([g_1Z(G),g_2Z(G)]).\]

Now the commuting graph of a group $G$ is the lexicographic product of a
graph on $G/Z(G)$ with $g_1Z(G)$ and $g_2Z(G)$ joined if $[g_1,g_2]=1$
(this graph can also be described as the induced subgraph of the commuting
graph of $G$ on a transversal for $Z(G)$ in $G$)
with a complete graph on $Z(G)$; the graph on $G/Z(G)$ is determined by the
isoclinism type. Hence, as observed by Pezzott and Nakaoka~\cite{pn}, we have:

\begin{theorem}\label{thm:isoclin}
Isoclinic groups of the same order have isomorphic commuting graphs.
\end{theorem}

\begin{remark}
  It follows from Lemma~\ref{lem:iso} that the extraspecial groups
  $G_1$ and $G_2$, defined in Section~\ref{extrasp-sec}, are
  isoclinic. Indeed, the recognition algorithm of
  Section~\ref{extrasp-sec} implicitly uses this property.
\end{remark}

Is there a converse to Theorem~\ref{thm:isoclin}? That is, if two
groups have isomorphic commuting graphs, must they be isoclinic? This
holds for various classes of groups, such as abelian groups,
non-abelian simple groups, and extraspecial groups (as we have seen).

However, it is not true in general; we recycle an example taken from
\cite{CK} to show this. Let $G$ be the group of order~$64$ which is
\texttt{SmallGroup(64,182)} in the {\sc SmallGroups} library \cite{Be02} in
\textsf{GAP} \cite{gap}. A \emph{Schur cover} of $G$ is a group $H$ of maximal
order subject to having a subgroup $Z\le H'\cap Z(H)$ such that
$H/Z\cong G$.  The Schur multiplier of $G$ has order $2$, so any Schur
cover of $G$ has order $128$. Moreover, the Bogomolov multiplier (see
\cite{CK} for context and other references) of $G$ is equal to the
Schur multiplier, which implies that any Schur cover $H$ is
\emph{commutation-preserving}: that is, two elements $a,b\in H$
commute if and only if their projections $Za,Zb\in G$ commute. This
implies that the commuting graph of a Schur cover is obtained from the
commuting graph of $G$ by replacing each vertex with a clique of size
$2$, with all edges between cliques corresponding to adjacent
vertices. This procedure also describes the commuting graph of
$G\times C_2$. On the other hand, it is easy to verify computationally
that the derived groups of $H$ and $G\times C_2$ are not isomorphic,
so these groups cannot be isoclinic.

We note that the group $G$ has nilpotency class $3$, as do all of its
Schur covers (these are \texttt{SmallGroup(128,$i$)} for $i\in\{789,
790, 791, 815, 816, 817\}$ in the \textsf{GAP} library). This suggested that it
might be true that groups with nilpotency class $2$ and isomorphic commuting
graphs are isoclinic.
However, Eamonn O'Brien (personal communication) has shown that even this is
false. The groups \texttt{SmallGroup(64,18)} and \texttt{SmallGroup(64,215)}
have nilpotency class~$2$ and isomorphic commuting graphs, but are not
isoclinic (their derived quotients have exponent $4$ and $2$ respectively).
He also has counterexamples with orders $3^6$ and $5^6$.

\begin{ques}
What can be said about pairs of groups with isomorphic commuting graphs? Do
non-isoclinic examples exist for all orders $p^6$ with $p$ prime?
\end{ques}

\part{Forbidden subgraphs}

\section{Preliminaries}\label{ten-sec}

A number of important graph classes can be defined either structurally or in terms of forbidden induced subgraphs. Here are structural descriptions.
\begin{enumerate}
\item
$\Gamma$ is called a {\em cograph} if it can be built from $1$-vertex graphs
by the operations of disjoint union and complementation.
\item
$\Gamma$ is called a {\em chordal graph} provided that $\Gamma$ contains no induced cycles of length greater than $3$; in other words,
any cycle of length at least $4$ contains a chord.
\end{enumerate}

The characterization by forbidden subgraphs is a composite of results from
\cite{FH,ChHa,Sumner,CRST}.

\begin{prop}\label{t:forbid}
\begin{enumerate}
\item A graph is a cograph if and only if it forbids $P_4$.
\item A graph is chordal if and only if it forbids $\{C_n:n\ge4\}$.
\end{enumerate}
\end{prop}

Since a cycle of length $5$ or more contains a $4$-vertex path, we have the following observation.

\begin{obs}\label{n-obs1}
If a graph is $\{P_4,C_4\}$-free, then it is chordal.
\end{obs}

We conclude this section with a general remark.
In the commuting graph $\Gamma(G)$ of a group $G$, elements of the centre
$Z(G)$ are dominating vertices; this is the reason why, to investigate questions
about connectedness, they are excluded. However, in our case,
it doesn't matter:

\begin{prop}\label{cog-subg}
\begin{enumerate}
\item The class of groups whose commuting graph is $\mathcal{F}$-free is
subgroup-closed.
\item If no graph in $\mathcal{F}$ has a dominating vertex, then the class of
groups with $\mathcal{F}$-free commuting graph is independent of whether
vertices in $Z(G)$ are included or excluded.
\end{enumerate}
\end{prop}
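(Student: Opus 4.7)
The plan is to handle both parts by exploiting the fact that the commuting relation on $G$ restricted to any subset $S \subseteq G$ is exactly the commuting relation within the subgroup or subset viewed intrinsically, so the commuting graph on $S$ is literally the induced subgraph of $\Gamma(G)$ on $S$. Induced-subgraph-freeness is clearly preserved under further induced subgraphs, so both statements will reduce to this single structural observation plus (for (b)) a short argument involving dominant vertices.

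For part (a), I would first note that if $H \le G$, then for $x,y \in H$ we have $xy = yx$ holding in $H$ if and only if it holds in $G$. Consequently $\Gamma(H)$ coincides with the induced subgraph of $\Gamma(G)$ on vertex set $H$. Since an induced subgraph of an $\mathcal{F}$-free graph is $\mathcal{F}$-free, the claim follows immediately.

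For part (b), write $\Gamma^*(G)$ for the commuting graph on $G \setminus Z(G)$; by the same observation as in (a), $\Gamma^*(G)$ is the induced subgraph of $\Gamma(G)$ on $G \setminus Z(G)$. The forward direction is then trivial: if $\Gamma(G)$ is $\mathcal{F}$-free then so is its induced subgraph $\Gamma^*(G)$. For the converse, suppose $\Gamma^*(G)$ is $\mathcal{F}$-free but, for contradiction, $\Gamma(G)$ contains an induced copy of some $F \in \mathcal{F}$. If this copy is disjoint from $Z(G)$ it already lives inside $\Gamma^*(G)$, contradicting $\mathcal{F}$-freeness there. Otherwise, pick a vertex $z$ of the copy lying in $Z(G)$; since $z$ commutes with every element of $G$, it is adjacent in $\Gamma(G)$ to every other vertex of the induced copy, so $z$ is a dominant vertex of $F$, contradicting the hypothesis on $\mathcal{F}$.

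There is no real obstacle here; the only thing to be careful about is the distinction between ``commuting graph on $G$'' and ``commuting graph on $G \setminus Z(G)$'', which the proposition is precisely designed to reconcile. The dominant-vertex hypothesis is exactly what is needed to rule out the one nontrivial case in the converse of (b).
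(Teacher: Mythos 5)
Your proof is correct and takes essentially the same approach as the paper: both parts reduce to the observation that commuting graphs of subsets are induced subgraphs of $\Gamma(G)$, and for (b) both use the fact that a $Z(G)$-vertex inside an induced copy of $F\in\mathcal{F}$ would be a dominant vertex of $F$. You merely spell out in more detail what the paper states in a single sentence.
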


\begin{proof}
(a) This is immediate from the fact that, if $H$ is a subgroup of $G$, then
$\Gamma(H)$ is an induced subgraph of $\Gamma(G)$. (We remark that the same
result holds for various other graphs such as the power graph, enhanced power
graph, nilpotency and solvability graphs. But it does not hold for yet others
such as the generating graph or the deep commuting graph.)

\smallskip

(b) If $X$ is an induced subgraph of $\Gamma(G)$, then vertices of
$X\cap Z(G)$ would be dominating vertices in $X$. \qed
\end{proof}

In the cases we are interested in (chordal graphs and
cographs), the excluded subgraphs are paths and cycles with more than three
vertices. None of these have dominating vertices, so we can take the
vertex set of the commuting graph to be all of $G$, but to decide if it has
any of these properties we can delete vertices in $Z(G)$.

\medskip

Also to prove main results of this paper, we need the following
assertion which generalizes a result obtained by
Wielandt~\cite{wielandt} in 1939.

\begin{prop}\label{Wielandt} Let $G$ be a finite group. Assume that one of the following statements holds:
\begin{enumerate}
\item $Z(G)=1$;
\item $G=G'$.
\end{enumerate}
Then $C_{\Aut(G)}(\Inn(G))=1$.
\end{prop}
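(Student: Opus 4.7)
The plan is to take an arbitrary $\alpha \in C_{\Aut(G)}(\Inn(G))$ and show that $\alpha$ fixes every element of $G$. Write $\iota_g$ for the inner automorphism $x \mapsto gxg^{-1}$. The commutation hypothesis $\alpha \iota_g = \iota_g \alpha$ is equivalent, after rearrangement, to $\alpha \iota_g \alpha^{-1} = \iota_g$; but the left-hand side equals $\iota_{\alpha(g)}$ by a standard calculation. Hence $\iota_{\alpha(g)} = \iota_g$ for every $g \in G$, which says $\alpha(g)g^{-1} \in Z(G)$. So there is a well-defined map $z \colon G \to Z(G)$, $z(g) = \alpha(g)g^{-1}$, and $\alpha(g) = g \cdot z(g)$ for all $g$.

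Case (a) is now immediate: if $Z(G)=1$, then $z$ is the trivial map, so $\alpha(g)=g$ for every $g$, and thus $\alpha = \mathrm{id}$.

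For case (b), the next step is to verify that $z$ is a group homomorphism. Since $z(h) \in Z(G)$ for every $h$, one computes
\[
\alpha(gh) = \alpha(g)\alpha(h) = g\, z(g)\, h\, z(h) = gh\, z(g)\, z(h),
\]
and comparing with $\alpha(gh) = gh \cdot z(gh)$ gives $z(gh) = z(g)z(h)$. Thus $z \colon G \to Z(G)$ is a homomorphism into an abelian group, so it factors through the abelianisation $G/G'$. By hypothesis $G = G'$, so $G/G'$ is trivial, forcing $z \equiv 1$ and therefore $\alpha = \mathrm{id}$.

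There is no real obstacle in this argument: the only conceptual step is recognising that centralising $\Inn(G)$ in $\Aut(G)$ is the same as saying $\iota_{\alpha(g)} = \iota_g$, and the rest is a short formal manipulation. The two hypotheses $Z(G)=1$ and $G=G'$ play parallel roles: the first kills the codomain of $z$, while the second kills its domain (after abelianisation). Wielandt's 1939 statement corresponds to the case $Z(G)=1$; the only novelty here is to extract the perfect-group case from the same homomorphism $z$.
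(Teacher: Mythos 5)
Your proposal is correct and follows essentially the same plan as the paper: establish from the centralising hypothesis that $\alpha(g)=g\,z(g)$ with $z(g)\in Z(G)$, then kill $z$ using each hypothesis in turn. The only cosmetic difference is in case~(b): you observe that $g\mapsto z(g)$ is a homomorphism into the abelian group $Z(G)$ and so vanishes on $G'=G$, whereas the paper computes directly that $\phi([x,y])=[\phi(x),\phi(y)]=[x,y]$ and invokes that $G$ is generated by commutators; these are two phrasings of the same fact (your $\ker z$ is exactly the fixed-point subgroup of $\alpha$).
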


\begin{proof}

  Let $\phi$ be an automorphism of $G$ which commutes with each inner
  automorphism of $G$. Then for each $g, h \in G$, we
  have $$\phi(h)^g=\phi(h^g)=\phi(h)^{\phi(g)},$$
  i.\,e. $[g,\phi]=g^{-1}\phi(g) \in Z(G)$ and $\phi(g)=g\cdot z_g$
  for some $z_g \in Z(G)$ (depending of $g$).

If $Z(G)=1$, then $z_g=1$ for each $g$, therefore $\phi$ is trivial and $C_{\Aut(G)}(\Inn(G))=1$, as required.

Now assume that $G$ is perfect, i.e. $G'=G$. Note that $\phi([x,y])=[\phi(x),\phi(y)]=[x,y]$ for each $x, y \in G$. Since $G$ is generated by commutators, we have $z_g=1$ for each $g$. Thus, again $\phi$ is trivial and so, $C_{\Aut(G)}(\Inn(G))=1$. \qed
\end{proof}

\section{Structure of groups whose commuting graphs are cographs or chordal graphs}\label{Structure}

In this section, we will investigate the group-theoretical structure
of a group whose commuting graph is a cograph or a chordal graph.  In
particular, in some sense, the class of such groups generalizes the
class of finite solvable groups.

First, we investigate special cases of groups like direct and central
products and Frobenius groups whose commuting graph is a cograph or a
chordal graph, and then prove the general structure theorem.

\subsection{Direct and central products}

\begin{obs}\label{new-ob-cc}
Let $A$ be an abelian group. Then $\Gamma(A\times G)$ is a cograph {\rm(}resp. a chordal graph{\rm)} if and only if $\Gamma(G)$  is a cograph {\rm(}resp. a chordal graph{\rm)}.
\end{obs}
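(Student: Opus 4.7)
My plan is to apply the forbidden-subgraph characterizations from Theorem~\ref{t:forbid}: cographs are exactly $P_4$-free graphs, while chordal graphs are exactly graphs with no induced $C_n$ for $n\ge 4$. The key observation is that, since $A$ is abelian, two distinct elements $(a,g),(a',g')\in A\times G$ commute in $A\times G$ if and only if $g$ and $g'$ commute in $G$; note that this holds automatically if $g=g'$ (with $a\ne a'$).

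The ``only if'' direction of both equivalences is immediate from Proposition~\ref{cog-subg}(a): the map $g\mapsto (e_A,g)$ embeds $G$ as a subgroup of $A\times G$, so $\Gamma(G)$ is an induced subgraph of $\Gamma(A\times G)$ and inherits $P_4$-freeness and $C_n$-freeness.

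For the converse, I would handle both cases in parallel by contrapositive. Suppose $\Gamma(A\times G)$ contains an induced subgraph $H$ isomorphic to $P_4$ or to $C_n$ for some $n\ge 4$, with vertices $v_1,\ldots,v_m$ and $v_k=(a_k,g_k)$. For any two non-adjacent vertices $v_i,v_j$ of $H$, the $G$-components $g_i,g_j$ must fail to commute in $G$, and in particular $g_i\ne g_j$. I claim that all the $g_k$ are pairwise distinct. If not, some adjacent pair $v_i,v_{i+1}$ satisfies $g_i=g_{i+1}$. A direct inspection of $P_4$ and $C_n$ (for $n\ge 4$) shows that every edge of $H$ has at least one other vertex $v_k$ of $H$ that is adjacent to exactly one of $v_i,v_{i+1}$; such a $v_k$ would give $g_k$ commuting with one of $g_i,g_{i+1}$ but not the other, which is impossible since $g_i=g_{i+1}$.

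Once distinctness is established, the induced sub-path or sub-cycle on $v_1,\ldots,v_m$ transports directly to an induced $P_4$ or induced $C_n$ on $g_1,\ldots,g_m$ in $\Gamma(G)$: consecutive $g_k,g_{k+1}$ commute (and are distinct), while non-consecutive ones do not. This contradicts $\Gamma(G)$ being a cograph (resp.\ chordal), completing the proof. The only mildly delicate step is the distinctness claim, and it reduces to the elementary case-check on the structure of $P_4$ and $C_n$; no tools beyond Theorem~\ref{t:forbid} and Proposition~\ref{cog-subg}(a) are required.
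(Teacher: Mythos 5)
Your argument is correct. The paper states this as an unproved observation, so there is no explicit paper proof to compare against, but your argument is sound: the ``only if'' direction is indeed just Proposition~\ref{cog-subg}(a) applied to the subgroup $\{e_A\}\times G$, and your converse correctly identifies that the crux is pulling an induced $P_4$ or $C_n$ ($n\ge 4$) in $\Gamma(A\times G)$ down to $\Gamma(G)$. The distinctness claim is the right pivot: two vertices $(a,g)$ and $(a',g)$ with $a\ne a'$ are adjacent \emph{true twins} (same closed neighbourhood) in $\Gamma(A\times G)$, and neither $P_4$ nor any $C_n$ with $n\ge 4$ contains a pair of adjacent true twins, which is exactly the case-check you describe. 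Once the $g_k$ are distinct, the projection is an isomorphism onto an induced subgraph of $\Gamma(G)$ because adjacency of $(a,g),(a',g')$ with $g\ne g'$ depends only on whether $g,g'$ commute.

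A slightly more structural route, in the spirit of the paper's later Proposition that $\Gamma(H\times K)=\Gamma(H)\boxtimes\Gamma(K)$, is to note that $\Gamma(A)$ is complete, so $\Gamma(A\times G)$ is the blowup of $\Gamma(G)$ obtained by replacing each vertex by a clique of size $|A|$; one then invokes the standard fact that clique-blowups preserve (and reflect) both $P_4$-freeness and chordality. Your proof is effectively a self-contained verification of that fact in the two cases needed, so the two approaches are essentially equivalent, with yours being more elementary and not relying on external blowup lemmas.
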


\begin{lemma}\label{zj-cg}
Let $H$ and $K$ be non-abelian groups.
Then $\Gamma(H\times K)$ has an induced subgraph isomorphic to $P_4$ and an induced subgraph isomorphic to $C_4$.
\end{lemma}

\begin{proof}
Let $a$ and $b$ be non-commuting elements of $G$, and $u$ and $v$
non-commuting elements of $H$. Then
$$
(a,u)\sim (e_H,u) \sim (b,e_K) \sim (e_H,v)
$$
is an induced path with four vertices, and
$$
(a,e_K)\sim (e_H,u) \sim (b,e_K) \sim (e_H,v) \sim (a,e_K)
$$
is an induced cycle of length $4$, as desired. \qed
\end{proof}

By Lemma~\ref{zj-cg} and Observation~\ref{new-ob-cc}, the following result holds.

\begin{prop}\label{zj-thm}
Let $H$ and $K$ be groups. Then
\begin{enumerate}
\item $\Gamma(H\times K)$ is  a cograph if and only if one of $H$ and $K$ is abelian and the commuting graph of the other is $P_4$-free;
\item $\Gamma(H\times K)$ is chordal if and only if one of $H$ and $K$ is abelian and the other is $C_n$-free for all $n\ge 4$.
\end{enumerate}
\end{prop}

Recall that a finite nilpotent group is the direct product of its Sylow subgroups.  Thus, Proposition~\ref{zj-thm} implies the following result.

\begin{cor}\label{Nilp-cog}
Let $G$ be a non-abelian nilpotent group. Then $\Gamma(G)$ is a
cograph {\rm(}resp. a chordal graph{\rm)} if and only if there exists exactly one non-abelian Sylow subgroup in $G$ whose commuting graph is $P_4$-free {\rm(}resp. $C_n$-free for all $n\ge 4${\rm)}, while the other Sylow subgroups are abelian.
\end{cor}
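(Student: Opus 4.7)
The plan is to write $G$ as the direct product $G=P_1\times\cdots\times P_k$ of its Sylow subgroups and then reduce to the two-factor situation already handled by Theorem~\ref{zj-thm}, i.e.\ the combination of Lemma~\ref{zj-cg} with Observation~\ref{new-ob-cc}. All the work is in choosing the right grouping of the Sylow factors into two blocks.

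For the ``only if'' direction, I would assume $\Gamma(G)$ is a cograph (the chordal case being parallel). Since $G$ is non-abelian, at least one $P_i$ is non-abelian. Suppose for contradiction that two of them, say $P_i$ and $P_j$ with $i\ne j$, are both non-abelian. Grouping $H:=P_i$ and letting $K$ be the direct product of all the remaining Sylow factors, the group $K$ is non-abelian because it contains $P_j$; then $G\cong H\times K$ with $H$ and $K$ both non-abelian, so Lemma~\ref{zj-cg} produces an induced $P_4$ (and an induced $C_4$) in $\Gamma(G)$, contradicting the hypothesis. Hence exactly one Sylow subgroup, say $P_1$, is non-abelian, and writing $A:=P_2\times\cdots\times P_k$ (an abelian group), we have $G=P_1\times A$. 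Observation~\ref{new-ob-cc} then implies that $\Gamma(P_1)$ is itself a cograph (resp.\ chordal), which by Theorem~\ref{t:forbid}(d) (resp.\ (c)) is equivalent to being $P_4$-free (resp.\ $C_n$-free for all $n\ge 4$).

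The ``if'' direction is immediate in the same framework: if $P_1$ is the unique non-abelian Sylow subgroup and its commuting graph is $P_4$-free (resp.\ $C_n$-free for all $n\ge 4$), then $G=P_1\times A$ with $A$ abelian, and Observation~\ref{new-ob-cc} transfers the cograph (resp.\ chordal) property from $\Gamma(P_1)$ directly to $\Gamma(G)$. There is no substantive obstacle; the only care required is the bookkeeping in the contradiction step to ensure that the grouping of Sylow factors into $H$ and $K$ yields two genuinely non-abelian factors, so that Lemma~\ref{zj-cg} can be applied.
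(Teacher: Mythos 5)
Your proof is correct and follows exactly the route the paper intends: decompose the nilpotent group as the direct product of its Sylow subgroups and apply Theorem~\ref{zj-thm} (equivalently Lemma~\ref{zj-cg} plus Observation~\ref{new-ob-cc}) to a two-block grouping. The paper states the corollary as an immediate consequence of Theorem~\ref{zj-thm} without spelling out the bookkeeping; your write-up just makes that bookkeeping explicit.
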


We saw in the first part the definition of the strong product of graphs and
the fact that the commuting graph of $G\times H$ is
$\Gamma(G)\boxtimes\Gamma(H)$ (Proposition~\ref{cgraph-strong}).

We can extend this to central products as follows. Let $\mathfrak{N}_2$ be the class of all finite nilpotent groups of nilpotency class at most~$2$ (thus, the class consists of finite abelian groups and finite non-abelian groups of nilpotency class~$2$).

\begin{lemma}\label{lem_cp}
Let $H$ and $K$ be two groups not in the class $\mathfrak{N}_2$.
Then $\Gamma(H\circ K)$ is neither a cograph nor a chordal graph.
\end{lemma}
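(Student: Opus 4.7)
The plan is to mimic the construction in Lemma~\ref{zj-cg} but to track elements carefully through the canonical projection $\pi\colon H\times K \to H\circ K = (H\times K)/N$, where $N = \{(z,\theta(z)) : z\in Z\}$. Since $H \notin \mathfrak{N}_2$, the quotient $H/Z(H)$ is non-abelian, so (because $Z \le Z(H)$) the further quotient $H/Z$ is non-abelian as well, and hence there exist $a,b \in H$ with $[a,b] \notin Z$. Symmetrically, pick $u,v \in K$ with $[u,v] \notin \theta(Z)$. These choices automatically force $a,b \notin Z$ and $u,v \notin \theta(Z)$ (otherwise the relevant commutator would equal the identity and lie in $Z$), and in particular $a\ne b$ and $u\ne v$.

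To show $\Gamma(H \circ K)$ is not chordal, I would exhibit an induced $C_4$ on the cosets $\pi(a,e_K)$, $\pi(e_H,u)$, $\pi(b,e_K)$, $\pi(e_H,v)$. Note that $\pi(h_1,k_1)$ and $\pi(h_2,k_2)$ commute in $H\circ K$ if and only if $([h_1,h_2],[k_1,k_2]) \in N$. Each of the four prospective cycle edges arises from a commutator equal to $(e_H,e_K)$ and is therefore present. The two diagonals give commutators $([a,b],e_K)$ and $(e_H,[u,v])$: the first can lie in $N$ only if $[a,b]\in Z$, and the second only if $[u,v]\in \theta(Z)$; both are ruled out. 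Distinctness of the four cosets is a routine check using $a,b\notin Z$, $u,v\notin \theta(Z)$, $a\ne b$, and $u\ne v$.

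For the induced $P_4$ certifying that $\Gamma(H\circ K)$ is not a cograph, I would substitute $\pi(a,u)$ for $\pi(a,e_K)$ and verify the induced path $\pi(a,u) \sim \pi(e_H,u) \sim \pi(b,e_K) \sim \pi(e_H,v)$, exactly parallel to Lemma~\ref{zj-cg}. The essentially new non-edges to check are $\pi(a,u)\not\sim\pi(b,e_K)$ and $\pi(a,u)\not\sim\pi(e_H,v)$, whose commutators simplify to $([a,b],e_K)$ and $(e_H,[u,v])$, already shown to lie outside $N$.

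The main obstacle is that passing to the quotient could, a priori, both identify distinct elements and promote non-commuting pairs to commuting ones. The analysis survives precisely because the hypothesis $H,K\notin\mathfrak{N}_2$ allows one to strengthen \emph{non-commuting} to \emph{commutator outside $Z$} (respectively outside $\theta(Z)$), and this is exactly the extra strength needed to control adjacency and distinctness after factoring by $N$.
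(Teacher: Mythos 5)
Your proof is correct and takes essentially the same route as the paper: choose $a,b\in H$ with $[a,b]$ outside the amalgamated central subgroup and likewise $u,v\in K$, then show that a $C_4$ and a $P_4$ in $\Gamma(H\times K)$ descend through $\pi$ with vertices staying distinct and both adjacency and non-adjacency preserved. Your $C_4$ is exactly the one in the paper; for the $P_4$ you reuse the path from Lemma~\ref{zj-cg} instead of the paper's $(a,u)\sim(a,e_K)\sim(e_H,v)\sim(b,v)$, a harmless variant, and you spell out the distinctness and commutator-in-$N$ checks that the paper compresses into the assertion that the projection is an isomorphism.
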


\begin{proof}
By assumption, we can choose $a,b\in H$ such that the commutator
$[a,b]$ is not in the centre of $H$, and similarly $u,v\in K$.
Then
\[(a,u)\sim(a,e_K)\sim(e_H,v)\sim(b,v)\]
carries a subgroup $P_4$ of $\Gamma(H\times K)$ such that the projection to
$H\circ K$ is an isomorphism, and also
\[(a,e_K)\sim(e_H,u)\sim(b,e_K)\sim(e_H,v)\sim(a,e_K)\]
is a subgraph isomorphic to $C_4$ which projects isomorphically to $H\circ K$.
\qed
\end{proof}

\subsection{Frobenius groups}

In this subsection we investigate when the commuting graph of a Frobenius group is a cograph or a chordal graph. We use the structure of the commuting graphs of
Frobenius groups established in Theorem~\ref{frob-thm}. We prove the following
assertion.

\begin{prop}\label{Frobenius} Let $G$ be a Frobenius group with Frobenius complement $H$ and Frobenius kernel $K$ and $\mathcal{F}$ be a collection of graphs such that no graph in $\mathcal{F}$ has a dominating vertex. Then $\Gamma(G)$ is $\mathcal{F}$-free if and only if $\Gamma(K)$ is $\mathcal{F}$-free and $\Gamma(H)$ is $\mathcal{F}$-free.
\end{prop}

\begin{proof} This is clear from our remarks about dominating vertices and the
fact (observed in Theorem~\ref{frob-thm}) that $\Gamma(G\setminus\{1\})$ is the
disjoint union of $\Gamma(H\setminus\{1\})$ and $|H|$ copies of
$\Gamma(K\setminus\{1\})$.
\end{proof}

\begin{cor}  Let $G$ be a Frobenius group with Frobenius complement $H$ and Frobenius kernel $K$. Then $\Gamma(G)$ is a cograph {\rm(}resp. chordal graph{\rm)} if and only if $\Gamma(K)$ is a cograph {\rm(}resp. a chordal graph{\rm)} and $\Gamma(H)$ is a cograph {\rm(}resp. a chordal graph{\rm)}.
\end{cor}

\begin{remark}
Since $K$ is nilpotent, Corollary~{\rm \ref{Nilp-cog}} gives a criterion for
$\Gamma(K)$ to be a cograph {\rm(}resp. a chordal graph{\rm)}.
\end{remark}

\begin{ques} For which Frobenius groups is the commuting graph a cograph or a chordal graph?
\end{ques}

\subsection{A general structure theorem}\label{General}

The main result of this section is the following theorem.

\begin{theorem}\label{CommonStructure_Cograph}
Let $G$ be a finite group such that $\Gamma(G)$ is a cograph {\rm(}resp.\ a chordal graph{\rm)}, and let $F(G)$ be the Fitting subgroup of $G$.
\begin{enumerate}
\item One of the following statements holds:
  \begin{enumerate}
  \item $C_G(F(G))=Z(F(G))$;
  \item $F(G)$ is of nilpotent class at most~$2$ and $G/F(G)$ is almost simple;
in particular, $G$ has a unique non-abelian composition factor.
  \end{enumerate}
\item If $\Soc(G)$ is non-abelian, then $G$ has a normal subgroup $N=A \times S$, where $A$ is abelian, $S$ is simple and $\Gamma(S)$ is a cograph {\rm(}resp.\ a chordal graph{\rm)}, such that $G/N$ is isomorphic to a subgroup of $\Out(S)$ and, therefore, $G/N$ is solvable.
\end{enumerate}
\end{theorem}

\begin{proof} Let $F^*(G)$ be the generalized Fitting subgroup of $G$.
Let $H_1$ and $H_2$ be distinct elements from $\Comp(G)$. By \cite[31.5]{Asch86}, $H_1$ and $H_2$ commute, therefore $\langle H_1, H_2 \rangle = H_1 \circ H_2$ is a subgroup of $G$. By Lemma~\ref{lem_cp}, $\Gamma(G)$ is neither a cograph nor a chordal graph, a contradiction to Proposition~\ref{cog-subg}. Thus, $|\Comp(G)| \le 1$.

If $|\Comp(G)|=0$, then $F^*(G)=F(G)$, and because we have $C_G(F^*(G))\le F^*(G)$ by \cite[31.13]{Asch86}, statement (a)(i) holds.

Assume that $|\Comp(G)|=1$. Then $G$ has a quasisimple characteristic subgroup $N=E(G)$. Consider $C=C_G(N)$, which is a normal subgroup of $G$. Let $H = \langle N ,C \rangle = N \circ C$. By Proposition~\ref{cog-subg} and Lemma~\ref{lem_cp}, $C$ is nilpotent of nilpotency class at most~$2$, therefore $C \le F(G)$.
On the other hand, $F(G) \le C$ by (\ref{F(G)E(G)}). Thus, $C=F(G)$ and, therefore $G/F(G)$ is a subgroup of $\Aut(N)$ which has a simple normal subgroup $H/C \cong N/Z(N)$.

Now we show that $\Aut(N)$ is an almost simple group and $\Soc(\Aut(N))\cong N/Z(N)$. Indeed, $N/Z(N)\cong\Inn(N)$ is a normal subgroup of $\Aut(N)$ and $N/Z(N)$ is simple. By Proposition~\ref{Wielandt}, we have $$\Aut(N) \cong\Aut(N)/C_{\Aut(N)}(\Inn(N))$$ is a subgroup of $\Aut(\Inn(N))\cong \Aut(N/Z(N))$. So, we have that $\Aut(N)$ is a subgroup of an almost simple subgroup $\Aut(N/Z(N))$ which contains a normal subgroup $\Inn(N)\cong N/Z(N)$. Thus, $\Aut(N)$ is an almost simple group with socle $N/Z(N)$, as required.

So, $G/F(G)$ is isomorphic to a subgroup of $\Aut(N)$ which is almost simple with socle isomorphic to $\Inn(N)$ and contains a subgroup $H/F(G) \cong\Inn(N)$, therefore $G/F(G)$ is almost simple. Thus, statement (a)(ii) holds.

\smallskip

Now we prove statement (b). Assume that $L=\Soc(G)$ is non-abelian. Then $L=A\times S$, where $A$ is abelian and $S$ is the direct product of non-abelian simple groups, moreover, both $A$ and $S$ are normal subgroups of $G$. By Propositions~\ref{cog-subg} and~\ref{zj-thm}, $S$ is simple and $\Gamma(S)$ is a cograph {\rm(}resp. a chordal graph{\rm)}. If $C=C_G(S)$, then $C \unlhd G$ and $C \cap S =1$ since $S$ is simple.
Thus, $N=SC=S \times C$ is a normal subgroup of $G$. By Proposition~\ref{cog-subg}, $\Gamma(N)$ is also a cograph (resp.\ a chordal graph). Using Proposition~\ref{zj-thm}, we conclude that $C$ is abelian.
Moreover, $G/C$ is isomorphic to a subgroup of  $\Aut(S)$ which is almost simple, therefore $G/N$ is isomorphic to a subgroup of $\Out(S)$. Thus, by the Schreier Conjecture (Theorem~\ref{schreier}), $G/N$ is solvable, and therefore statement (b) holds. \qed
\end{proof}

\begin{remark} If $\Gamma(G)$ is a cograph, then in statement~$(b)$ of Theorem~{\rm \ref{CommonStructure_Cograph}} the group $G/N$ is cyclic. This result follows from the classification of finite simple groups whose commuting graphs are cographs, see Theorem~{\rm \ref{Simple-cog}} below.
\end{remark}

The following problem is of interest.

\begin{problem}\label{Classify} Classify finite simple groups whose commuting graphs are chordal.
\end{problem}

\section{Simple groups whose commuting graph is a cograph or a chordal graph}\label{Simple}

In this section, we investigate when the commuting graph of a simple sporadic or alternating group is a cograph or a chordal graph, and then obtain a complete classification of finite simple groups whose commuting graph is a cograph.

\subsection{CA-groups and AC-groups}

A group $G$ is a \emph{CA-group} if the centralizer of every non-identity
element of $G$ is abelian.
Another name is \emph{CT-groups}, since they are the groups for which
the commutation relation on non-identity elements is transitive. (If
$x\sim y\sim z$ in the CA-group $G$, then $x,z\in C_G(y)$, so $x$ and
$z$ commute. The converse is similar.)

\medskip

If $G$ is a CA-group, then the induced subgraph $\Gamma(G)$ on non-identity
elements is a disjoint union of complete graphs. As a result, we see:

\begin{prop}
Let $G$ be a CA-group. Then $\Gamma(G)$ is a cograph, chordal, and perfect.
\end{prop}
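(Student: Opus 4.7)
The plan is to leverage the structural description established just before the statement: since $G$ is a CA-group, the subgraph of $\Gamma(G)$ induced on $G\setminus\{e\}$ is a disjoint union of complete graphs, while $e$ itself is a dominant vertex of $\Gamma(G)$ because it commutes with every element. Thus $\Gamma(G)$ has the very restricted shape ``dominant vertex joined to a disjoint union of cliques'', and all three conclusions can be read off from this shape by checking the forbidden-subgraph characterizations in Theorem~\ref{t:forbid}.

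First I would verify that $\Gamma(G)$ is $P_4$-free, which by Theorem~\ref{t:forbid}(d) yields the cograph conclusion. Suppose four vertices induce a $P_4$. If all four lie in $G\setminus\{e\}$, then the two endpoints of the path share a common neighbour in the disjoint union of cliques (either internal vertex), forcing all four into a single clique and contradicting the non-adjacencies in $P_4$. If instead $e$ is one of the four vertices, then $e$ is adjacent to each of the other three, giving it degree $3$ in the induced subgraph, which is impossible in $P_4$.

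Next, the same dichotomy gives chordality via Theorem~\ref{t:forbid}(c). An induced cycle of length $n\geq 4$ cannot lie entirely in $G\setminus\{e\}$, since a disjoint union of cliques has no induced cycle on at least four vertices. If such a cycle contained $e$, then $e$ would be adjacent to every other cycle vertex, producing chords and contradicting the induced status. Hence $\Gamma(G)$ forbids $\{C_n : n\geq 4\}$.

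Finally, perfection follows from either of the previous conclusions, since both cographs and chordal graphs are classical subclasses of perfect graphs; equivalently, the two properties already forbid all odd holes and odd antiholes of length at least five, so Theorem~\ref{t:forbid}(e) applies. I do not expect any real obstacle: the substantive work has been packaged into the preceding observation that non-identity vertices fall into a disjoint union of cliques, and the rest is a short verification against the forbidden induced subgraphs.
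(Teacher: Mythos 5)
Your proof is correct and matches the paper's approach exactly: the paper notes just before the proposition that for a CA-group the induced subgraph on $G\setminus\{e\}$ is a disjoint union of cliques (by CT-transitivity), and leaves the forbidden-subgraph verification implicit, which you have simply spelled out. The only remark worth adding is that the case analysis around $e$ could be sidestepped by citing Proposition~\ref{cog-subg}(b), since none of the forbidden graphs $P_4$, $C_n$ ($n\ge4$), or odd antiholes has a dominant vertex, but your direct check is equally valid.
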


CA-groups played an important historical role in developing techniques for
attacking problems on finite groups; they are now well-understood.
From results of Weisner~\cite{weisner}, Brauer, Suzuki and Wall
\cite{BSW} and Suzuki~\cite{suzuki}, we obtain:

\begin{prop}
A finite CA-group $G$ satisfies one of the following:
\begin{enumerate}
\item $G$ is abelian;
\item $G$ is a Frobenius group with abelian Frobenius kernel and cyclic
Frobenius complement; or
\item $G\cong\PSL_2(q)$ for $q$ a power of~$2$.
\end{enumerate}
\label{ca-thm}
\end{prop}

More generally, $G$ is said to be an \emph{AC-group} if $C_G(x)$ is abelian
for every element $x\in G\setminus Z(G)$. This is a weaker condition, though
clearly a group with trivial centre is an AC-group if and only if it is a
CA-group.

AC-groups were investigated by Schmidt~\cite{Sch70} and Rocke~\cite{Ro75}.
(Schmidt investigated groups in which two maximal centralizers intersect just
in $Z(G)$; it is not hard to see that, for finite groups, this is equivalent to
the AC property.) Abdollahi~\emph{et al.}~\cite{AA06} investigated the
connection of this property with the \emph{non-commuting graph}, the
complement of the commuting graph. The classification of the AC-groups is not
as complete as that of CA-groups, but the cited authors have proved a
number of results about these groups.

From our point of view, AC-groups are as good as CA-groups:

\begin{prop}\label{cent-abe}
Let $G$ be an AC-group. Then $\Gamma(G)$ is a cograph and a chordal graph.
\end{prop}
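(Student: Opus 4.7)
The plan is to use the forbidden subgraph characterization in Theorem~\ref{t:forbid} (cographs $=$ $P_4$-free, chordal $=$ $C_n$-free for all $n\ge 4$) together with one key observation about AC-groups: if $b\in G\setminus Z(G)$, then $C_G(b)$ is abelian, so the induced subgraph of $\Gamma(G)$ on $C_G(b)\setminus\{b\}$ is a clique. Equivalently, any two distinct neighbors of $b$ in $\Gamma(G)$ must themselves be adjacent. This ``local transitivity of commutation away from the centre'' is essentially the only ingredient needed.

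To show $\Gamma(G)$ is a cograph, suppose for contradiction that $a\sim b\sim c\sim d$ is an induced $P_4$ in $\Gamma(G)$. Since $a$ and $c$ are both neighbors of $b$ but $a\not\sim c$, the observation forces $b\in Z(G)$. But then $b$ commutes with $d$, yielding an edge $b\sim d$, which contradicts the fact that the path is induced.

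To show $\Gamma(G)$ is chordal, suppose for contradiction that $x_1\sim x_2\sim\cdots\sim x_n\sim x_1$ is an induced cycle of length $n\ge 4$. Then $x_1$ and $x_3$ are neighbors of $x_2$ with $x_1\not\sim x_3$, so the observation forces $x_2\in Z(G)$; but then $x_2$ commutes with $x_4$ (and $x_4\ne x_2$ because $n\ge 4$), producing a chord $x_2\sim x_4$, again contradicting that the cycle is induced.

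There is essentially no hard step here: the content of the proposition is already packaged into the AC hypothesis, and the two forbidden configurations are ruled out by the same three-vertex argument. The only minor point to keep straight is that one must invoke the existence of a vertex at distance two along the path/cycle in order to derive the contradiction from $b\in Z(G)$, which is why the argument does not disprove the existence of triangles (and why $n\ge 4$ is essential in the chordal case).
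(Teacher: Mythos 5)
Your proof is correct and rests on the same key observation as the paper's: for non-central $b$, the abelian centralizer $C_G(b)$ forces all neighbors of $b$ to be pairwise adjacent. The paper packages this more compactly by noting that the induced subgraph on $G\setminus Z(G)$ is a disjoint union of complete graphs (and then implicitly that adding back the dominant vertices of $Z(G)$ preserves cograph/chordal, via Proposition~\ref{cog-subg}(b)); you instead rule out $P_4$ and $C_n$ ($n\ge 4$) directly by the same local argument, handling the possibility $b\in Z(G)$ by producing a forbidden chord. Same idea, slightly different bookkeeping.
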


\begin{proof}
By Proposition~\ref{cog-subg}, it suffices to consider the induced subgraph on
the set $G\setminus Z(G)$; and this graph is a disjoint union of complete
graphs. \qed
\label{ac-prop}
\end{proof}

Here is a simple general result which we will use.

\begin{prop}
Let $G$ be a finite group with the property that every non-identity element
of $G/Z(G)$ has cyclic centralizer. Then $G$ is an AC-group.
\label{p-ac-cyc}
\end{prop}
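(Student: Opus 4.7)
The plan is to fix an arbitrary $x \in G \setminus Z(G)$ and show that $C_G(x)$ is abelian, since this is exactly the AC condition. Write $\bar G = G/Z(G)$ and, for $g \in G$, write $\bar g = gZ(G)$. Since $x \notin Z(G)$, we have $\bar x \neq 1$, so by hypothesis $C_{\bar G}(\bar x)$ is cyclic.

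First I would observe that the canonical projection $\pi : G \to \bar G$ sends $C_G(x)$ into $C_{\bar G}(\bar x)$: if $y$ commutes with $x$ in $G$ then $\bar y$ commutes with $\bar x$ in $\bar G$. Since $Z(G) \le C_G(x)$ (the centre commutes with everything), the kernel of $\pi$ restricted to $C_G(x)$ is exactly $Z(G)$, giving an injection
\[
C_G(x)/Z(G) \hookrightarrow C_{\bar G}(\bar x).
\]
The right-hand side is cyclic, hence $C_G(x)/Z(G)$ is cyclic as well.

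Now I would apply the standard fact that if a group $A$ has a central subgroup $Z$ with $A/Z$ cyclic, then $A$ is abelian. Here $A = C_G(x)$ and $Z = Z(G)$; note that $Z(G)$ is central in $C_G(x)$ because it is central in $G$. Writing $C_G(x) = \langle g \rangle \cdot Z(G)$ for a suitable $g$, any two elements of $C_G(x)$ have the form $g^i z_1$ and $g^j z_2$ with $z_1, z_2 \in Z(G)$, and these clearly commute. Hence $C_G(x)$ is abelian.

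Since $x$ was an arbitrary element of $G \setminus Z(G)$, this establishes that $G$ is an AC-group. No serious obstacle is expected: the argument is a direct transfer from the centralizer in the quotient back to the centralizer in $G$, combined with the elementary centre-quotient lemma. The only point to be careful about is distinguishing $C_G(x)/Z(G)$ from $C_G(x)/Z(C_G(x))$, which is handled by noting that $Z(G) \le Z(C_G(x))$, so cyclicity of the former forces cyclicity of the latter.
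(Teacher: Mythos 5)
Your proposal is correct and follows exactly the same argument as the paper: embed $C_G(x)/Z(G)$ into the cyclic group $C_{G/Z(G)}(xZ(G))$, then apply the standard lemma that a group with cyclic central quotient is abelian. The only difference is that you spell out the injection and the centre-quotient lemma in more detail, which the paper compresses into one line.
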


\begin{proof}
Take $a\in G\setminus Z(G)$. Then $Z(G)\le C_G(a)$, and $C_G(a)/Z(G)\le C_G(aZ(G))$ (in
$G/Z(G)$); so $C_G(a)/Z(G)$ is cyclic, whence $C_G(a)$ is abelian. \qed
\end{proof}

\subsection{Generalized dihedral groups and generalized quaternion groups}

In this subsection we show that these groups are AC-groups, so that their
commuting graphs are cographs as well as chordal graphs.

\begin{prop}\label{gdg-thm}
Let $D(A)$ be the generalized dihedral group as presented in
Equation~(\ref{gdg}). Then $D(A)$ is an AC-group. Hence $\Gamma(D(A))$ is a
cograph as well as a chordal graph. In particular, the commuting graph of a
dihedral group is a cograph as well as a chordal graph.
\end{prop}

\begin{proof}
The centre of $D(A)$ is the subgroup $T$ of $A$ consisting of elements of orders $1$ or $2$. If we remove it, then $A\setminus T$ is a clique, and the elements outside $A$ fall into cliques of the form $\langle t,T\rangle\setminus T$ for $t\notin A$; there are no edges between these cliques. So the induced subgraph is a disjoint union of complete graphs. Now Proposition~\ref{cog-subg} gives the
result.\qed
\end{proof}

\begin{prop}
Let $Q_{4m}$ be the generalized quaternion group as presented in
Equation~(\ref{q4m}). Then $Q_{4m}$ is an AC-group. Hence $\Gamma(Q_{4m})$ is a
cograph as well as a chordal graph.
\end{prop}

\begin{proof}
The proof is similar to the preceding theorem. We find that, with $G=Q_{4m}$,
we have $Z(G)=\{e,x^m\}$ (Observation~\ref{obs2}); for
$1\le i\le 2m-1$ and $i\ne m$, we find that
$C_G(x^i)=\langle x\rangle$, while for $0\le i\le 2m-1$ we have
$C_G(x^iy)=\langle x^iy\rangle=\{e,x^iy,x^m,x^{m+i}y\}$. So again the
induced subgraph on $G\setminus Z(G)$ consists of a complete graph of size
$2m-2$ together with $m$ disjoint edges. \qed
\end{proof}

\subsection{Symmetric groups and alternating groups }

\begin{prop}\label{syg-cg}
$\Gamma(S_n)$ is a cograph if and only if $n\le 3$.
\end{prop}

\begin{proof}
Proposition~\ref{cog-subg} shows that it is enough to show that $\Gamma(S_3)$ is a
cograph but $\Gamma(S_4)$ is not (since $S_{n-1}\le S_n$ for any $n$).
By Proposition~\ref{gdg-thm}, we have that $\Gamma(S_3)$ is a cograph as $S_3\cong D_{6}$.
As $S_{n-1}\le S_n$, Proposition~\ref{cog-subg} shows that if $n\le 3$, then $\Gamma(S_n)$ is a cograph. In $\Gamma(S_4)$,
$$
(12)\sim(12)(34)\sim(13)(24)\sim(13)
$$
is an induced path isomorphic to $P_4$.\qed
\end{proof}

\begin{prop}\label{sg-chog}
$\Gamma(S_n)$ is a chordal graph if and only if $n\le 4$.
\end{prop}

\begin{proof}
It is easy to see that
$$
(12)\sim(34)\sim(15)\sim(24)
\sim(35)\sim(12)
$$
is an induced cycle isomorphic to $C_{5}$ in $\Gamma(S_5)$. As in
Proposition~\ref{syg-cg}, it suffices to prove that $\Gamma(S_4)$
is a chordal graph. Note that in
$\Gamma(S_4)$, for any non-central element, the cycle type of this element must be one of
\begin{equation}\label{cho-eq-10}
\{(ij), (ij)(kl), (ijk),(ijkl)\},
\end{equation}
where $\{i,j,k,l\}=\{1,2,3,4\}$.
Moreover, it is easy to see that
\[C_{S_4}((ij))=\{e,(ij),(kl),(ij)(kl)\},
C_{S_4}((ij))=\{e,(ij),(kl),(ij)(kl)\}\]
\begin{equation}\label{cho-eq-11}
C_{S_4}((ijkl))=\langle(ijkl)\rangle
\end{equation}
and
$$
C_{S_4}((ij)(kl))=\{e, (ij), (kl), (ij)(kl), (ik)(jl), (il)(jk), (ikjl), (iljk)\}.
$$
Suppose, for the sake of contradiction, that $\Gamma(S_4)$ contains an induced subgraph isomorphic to $C_n$ with $n\ge 4$, say $\Delta$. Let $a,b,c,d$ be $4$ distinct vertices of $\Delta$.
Then $C_{S_4}(x)$ is non-abelian for any $x\in \{a,b,c,d\}$.
Note that every centralizer in (\ref{cho-eq-11}) is abelian. It follows from (\ref{cho-eq-10}) that every element in $\{a,b,c,d\}$ has
cycle type $(ij)(kl)$, which is impossible because $S_4$ has precisely three elements that are the product of two disjoint cycles. \qed
\end{proof}

\begin{prop}\label{ag-cg}
The following are equivalent{\rm:}

$(a)$ $\Gamma(A_n)$ is a cograph{\rm;}

$(b)$ $\Gamma(A_n)$ is a chordal graph{\rm;}

$(c)$ $n\le 5$.
\end{prop}

\begin{proof}
The group $A_5$ is a CA-group (it is easily checked from~\cite{CC85} that all
its non-abelian  subgroups have trivial centre), so the result for
$n=5$ follows from Proposition~\ref{ca-thm}. Moreover, $A_n\le A_5$ for $n\le 5$,
so the result follows for these groups as well.

On the other hand, in $\Gamma(A_6)$, it is easy to verify that
$$
(12)(56)\sim(12)(34)\sim(13)(24)\sim(13)(56)
$$
is an induced path isomorphic to $P_4$.
Also, it is not hard to verify that
\begin{eqnarray*}
&&(34)(56)\sim(12)(56)\sim(15)(26)\sim(26)(34)\sim(24)(36)\\
&&\sim(15)(24)\sim(12)(45)\sim(36)(45)\sim(34)(56)
\end{eqnarray*}
%$$
%(13)(45)\sim(14)(35)\sim(14)(26)\sim(12)(46)
%\sim(12)(35)\sim(13)(25)\sim(13)(46)
%$$
%$$
%\sim(14)(36)
%\sim(14)(25)\sim(12)(45)\sim(12)(36)\sim(13)(26)\sim(13)(45)
%$$
is an induced cycle isomorphic to $C_8$, so $\Gamma(A_6)$ is not chordal.
Now $A_6\le A_n$ for $n>6$, so $\Gamma(A_n)$ is neither a cograph nor chordal
for $n\ge6$, by Proposition~\ref{cog-subg}.
\qed
\end{proof}

In the commuting graph of $A_6$ with the identity removed, the elements of
orders $3$ and $5$ induce a disjoint union of cliques, while an element of
order~$4$ is joined only to its inverse and square. So induced cycles on
more than three vertices must consist of elements of order $2$. We can describe
the induced subgraph on these elements as follows.

\begin{prop}
Let $\Delta$ be the induced subgraph of the commuting graph of $A_6$ on the set
of involutions. Then $\Delta$ is isomorphic to the line graph of Tutte's
$8$-cage~\cite{Tutte}. In particular, an induced cycle in $\Gamma(A_6)$ with
length greater than~$3$ has length at least~$8$.
\end{prop}

\begin{proof} Tutte's $8$-cage $T$ (the smallest cubic graph with girth~$8$)
can be defined as follows. Vertices are odd involutions in the symmetric group
$S_6$; these are of two types, transpositions and products of three
transpositions. Two vertices are joined if one is a product of three
transpositions, one of which is the other. Further information on this graph
can be found in \cite[p.~209]{BCN}.

There is a bijection between vertices of $\Delta$ and edges of $T$: for
example, the vertex $(12)(34)$ of $\Delta$ corresponds to the edge
$\{(12)(34)(56), (56)\}$ of $T$. A simple check shows that this bijection is
an isomorphism between $\Delta$ and the line graph of $T$.

Now it is easily checked that $T$ has girth~$8$; it is in fact the incidence
graph of the unique generalized quadrangle of order~$2$. It follows that, apart
from triangles coming from three edges through a vertex of $T$, the shortest
cycles in $\Delta$ have length~$8$.

The fact that this holds for the graph $\Gamma(A_6)$ now follows from the
remarks before the Proposition.\qed
\end{proof}

\begin{remark}
However, $\Gamma(A_7)$ has an induced cycle isomorphic to $C_{6}$, such as $(123)\sim(456)\sim(127)\sim(345)\sim(126)\sim(457)\sim(123)$. Also, $\Gamma(A_8)$ has an induced cycle isomorphic to $C_{4}$, such as $(12)(34)\sim(567)\sim(348)\sim(12)(56)\sim(12)(34)$.
\end{remark}

\subsection{Sporadic groups}

In this subsection, we resolve the question for the sporadic simple groups.

\begin{prop}
If $G$ is a sporadic simple group, then $\Gamma(G)$ is neither a cograph nor a chordal graph.
\end{prop}

\begin{proof}
There are $26$ sporadic simple groups.
Observe that $S_5$ is a subgroup of the Mathieu group $M_{11}$ by
using information in the $\mathbb{ATLAS}$ of finite groups \cite{CC85}.
Also, note that the fact that the Mathieu group $M_{11}$ is a subgroup of all the other sporadic simple groups except $M_{22}$, $J_1$, $J_2$, $J_3$, $He$, $Ru$ and $Th$.
Thus, by Proposition~\ref{cog-subg} and
Propositions~\ref{syg-cg} and \ref{sg-chog}, the commuting graph of a sporadic simple group containing $M_{11}$ is neither a cograph nor a chordal graph.

Now by \cite{CC85} again, it is easy to observe that
$A_7\le M_{22}$, $D_6\times D_{10}\le J_1$, $A_4\times A_{5}\le J_2$, $A_6\le J_3$, $S_6\le He$, $A_8\le Ru$, $S_5\le Th$.
Thus, it follows from Proposition~\ref{cog-subg}, Lemma~\ref{zj-cg} and Proposition~\ref{ag-cg} that
the commuting graphs of these $7$ sporadic simple groups are neither cographs nor chordal graphs, as desired. \qed
\end{proof}

\subsection{Simple groups of Lie type whose commuting graph is a cograph}

Now we turn to simple groups of Lie type. In this analysis, we only consider
cographs, not chordal graphs.

\begin{prop}
Let $G=\PSL_2(q)$, with $q\ge4$. Then $\Gamma(G)$ is a cograph if and only if
either $q$ is a power of $2$ or $q=5$.
\end{prop}

Note that $\PSL_2(5)\cong\PSL_2(4)$.

\begin{proof}
The subgroup structure of $\PSL_2(q)$ was determined by Dickson~\cite{Dickson},
and can be found, for example, in~\cite{BHRD}.

Suppose first that $q\ge 4$ is a power of $2$.
Then every non-trivial element of $\PSL_2(q)$ has order $2$ or a divisor of $q-1$ and $q+1$. Also,
the centralizers of involutions of $\PSL_2(q)$ are elementary abelian groups with order $q$, and the centralizers of other non-trivial elements of $\PSL_2(q)$ are cyclic of order $q-1$ or $q+1$. It follows that in this case $\PSL_2(q)$ is a CA-group, and so by Proposition~\ref{ca-thm}, $\Gamma(\PSL_2(q))$ is a cograph.

Suppose next that $q$ is odd at least $5$. Let $\epsilon\in\{+1,-1\}$ be chosen so that $q+\epsilon1$ is divisible by $4$.
Note that $\PSL_2(q)$ contains a Klein group such that
the centralizers of its involutions are dihedral groups $D_{2(q+\epsilon1)}$ which contain disjoint cyclic groups of order $(q+\epsilon)/2$.
If $q>5$, then $(q+\varepsilon)/2>2$. So two involutions together with generators of cyclic groups of order $(q+\epsilon)/2$ in their centralizers induce a path of length~$4$. (The list of subgroups of $G$ shows that these generators do not commute.) So if $q>5$ is odd, then $\Gamma(\PSL_2(q))$ is not a cograph. \qed
\end{proof}

\begin{remark} It is well-known that $\PSL_2(2)\cong S_3$ and $\PSL_2(3)\cong A_4$, therefore their commuting graphs are cographs by Propositions~{\rm\ref{syg-cg}} and~{\rm\ref{ag-cg}}, respectively.
\end{remark}

\begin{prop}\label{cog-PSL}
Let $G$ be either $\PSL_3(q)$ or a group of Lie type of BN-rank at least~$3$.
Then $\Gamma(G)$ is not a cograph.
\end{prop}

\begin{proof}
We begin by showing that, if $G=\SL_3(q)$, then $\Gamma(G)$ is not a cograph.

Suppose first that $q\notin\{2,4\}$. Choose a non-zero element $a\in\GF(q)$
such that $a^3\ne1$, and let $b=a^{-2}$, so that $b\ne a$.

Consider the four elements $g_1,\ldots,g_4$ given by
\[
\left(\begin{array}{ccc}1&1&0\\0&1&0\\0&0&1\end{array}\right),\,
\left(\begin{array}{ccc}a&0&0\\0&a&0\\0&0&b\end{array}\right),\,
\left(\begin{array}{ccc}b&0&0\\0&a&0\\0&0&a\end{array}\right),\,
\left(\begin{array}{ccc}1&0&0\\0&1&1\\0&0&1\end{array}\right).
\]
It is clear that all of them belong to $G$, and that
$g_1\sim g_2\sim g_3\sim g_4$ in $\Gamma(G)$. A short calculation gives the
commutators of the other three pairs; none of them is trivial. So we have
an induced path in $\Gamma(G)$.

But the argument gives more. In fact none of the commutators is a scalar
matrix, so if $Z$ is any subgroup of the centre of $G$ (the group of scalar
matrices), then we also have an induced path in $\Gamma(G/Z)$, so that
$\Gamma(G/Z)$ is not a cograph. In particular, $\Gamma(\PSL_3(q))$ is not a
cograph.

Now we can cover the remaining two cases. Since $\PSL_3(2)\cong\PSL_2(7)$,
the commuting graph of $\PSL_3(2)$ is not a cograph. Also, $\PSL_3(2)$ is a
subgroup of both $\SL_3(4)$ and $\PSL_3(4)$, so their commuting graphs are not
cographs either.

For the remainder of the proof, we use the Levi decomposition of parabolic
subgroups of groups with BN-pairs (see~\cite[Section 8.5]{Carter}).
The Borel subgroup $H$ of such a group is generated by the root subgroups
corresponding to the positive roots with respect to a fundamental system
$\Pi$ of roots. Parabolic subgroups $P_\Phi$ are those which contain $H$; they
are generated by $H$ together with the root subgroups corresponding to the
negatives of a subset $\Phi$ of the negatives of the fundamental group.
According to the Levi decomposition, $P_\Phi$ is the semidirect product
of its unipotent radical by the \emph{Levi factor} $L_\Phi$, a group with
BN-pair corresponding to the roots $\Phi$.
In the covering group, $L_\Phi$ may have a centre, which may or may not be
killed when we factor out the centre of $G$. (For example, $\SL_n(q)$ has a
parabolic subgroup corresponding to the first two roots, generated by all
upper unitriangular matrices together with the lower unitriangular matrices
with non-zero off-diagonal elements in positions $1,2,3$ only. These generate
$\SL_3(q)$; its image in $\PSL_n(q)$ is either $\SL_3(q)$ or $\PSL_3(q)$
depending on the congruence of $n$ and $q$ mod~$3$.)

Now the Coxeter--Dynkin diagram of any simple group $G$ of Lie type with
BN-pair of rank at  least $3$ contains a single edge, corresponding to a root
system of type $A_2$; so $G$ has a parabolic
subgroup whose Levi factor is a central quotient of $\SL_3(q)$, as in the
example above. From Proposition~\ref{cog-subg},
we conclude that $\Gamma(G)$ is not a cograph. \qed
\end{proof}

\begin{prop}\label{Thm_SU_3}
Let $G$ be a central quotient of $\SU_3(q)$, with $q>2$. Then $\Gamma(G)$ is
not a cograph.
\end{prop}

\begin{proof}
Suppose that $q>2$.
We take the Hermitian form $H(x_1,x_2,x_3)=x_1^{q+1}+x_2^{q+1}+x_3^{q+1}$.
A diagonal matrix with diagonal entries $(a,b,c)$ preserves this form if and
only if
\[a^{q+1}=b^{q+1}=c^{q+1}=1,\]
and belongs to $\SU_3(q)$ if additionally $abc=1$.
So the set of such matrices forms a subgroup of $\SU_3(q)$ isomorphic to
$\mathbb{Z}_{q+1}^2$, and its quotient in $\PSU_3(q)$ is
$\mathbb{Z}_{q+1}\times \mathbb{Z}_{(q+1)/\gcd(q+1,3)}$. We observe that $(-1)^{q+1}=1$ (this
is clear if $q$ is even and holds if $q$ is odd since then $(-1)^2=1$).

Let $a\in GF(q^2)$ satisfy $a^{q+1}=1$, $a^3\ne1$, and let $b=a^{-2}$, so
that $a\ne b$. Now consider the four matrices $g_1$, \dots, $g_4$ given by
\[
\left(\begin{array}{ccc}0&1&0\\1&0&0\\0&0&-1\end{array}\right),\,
\left(\begin{array}{ccc}a&0&0\\0&a&0\\0&0&b\end{array}\right),\,
\left(\begin{array}{ccc}b&0&0\\0&a&0\\0&0&a\end{array}\right),\,
\left(\begin{array}{ccc}-1&0&0\\0&0&1\\0&1&0\end{array}\right).
\]
It is readily checked that all four matrices belong to $\SU_3(q)$, and that
$g_2$ commutes with $g_1$ and $g_3$ while $g_3$ commutes with $g_4$. The
other pairs fail to commute; indeed in each case $g_ig_j$ is not a scalar
multiple of $g_jg_i$. So the four matrices induce a $P_4$ subgraph of the
commuting graph, which remains as a $P_4$ subgraph when the scalar matrices
in $\SU_3(q)$ are factored out. \qed
\end{proof}

\medskip

Among the groups of Lie type, this leaves only those of ranks $1$ or $2$,
and we have already handled $\PSL_2(q)$, $\PSL_3(q)$, and $\PSU_3(q)$ for $q>2$. The remaining groups
are:
\begin{enumerate}\itemsep0pt
\item $\PSp_4(q)$;
\item $\PSU_n(q)$ for $n\in\{4,5\}$;
\item $G_2(q)$, $\Sz(q)={}^2B_2(q)$, $R_1(q)={}^2G_2(q)$, $R_2(q)={}^2F_4(q)$, and
${}^3D_4(q)$.
\end{enumerate}

We have omitted the orthogonal groups here. For these satisfy
\begin{enumerate}
\item $\POm_3(q)\cong\PSL_2(q)$;
\item $\POm^+_4(q)\cong\PSL_2(q)\times\PSL_2(q)$ and $\POm^-_4(q)\cong\PSL_2(q^2)$;
\item $\POm_5(q)\cong\PSp_4(q)$;
\item $\POm^+_6(q)\cong\PSL_4(q)$;
\item $\POm^-_6(q)\cong\PSU_4(q)$
\end{enumerate}
(see the $\mathbb{ATLAS}$ of Finite Groups~\cite{CC85}). So (a), (b) and (d)
are covered by our earlier results, and (c) and (e) will be treated as symplectic or unitary groups.

Also $G_2(q)$ contains a subgroup $\PSL_3(q)$, and ${}^3D_4(q)$
contains $G_2(q)$ \cite{Cooperstein,Kleidman,Kleidman2}, so their commuting graphs are not cographs, by Proposition~\ref{cog-subg} and Proposition~\ref{cog-PSL}.

We now finish off the classical groups.

The group $\PSp_4(q)$ contains the central product of two copies of
$\SL_2(q)$. (To see this, note that the $4$-dimensional vector space can be
written as the orthogonal direct sum of two $2$-dimensional non-degenerate subspaces; so
$\Sp_4(q)$ contains $\SL_2(q)\times\SL_2(q)$. If $q$ is odd, take the
quotient by $\{\pm I\}$ to obtain a central product.) So its commuting graph
is not a cograph, by Lemma~\ref{lem_cp}.

For unitary groups, the $\PSU_3(2)$ is not simple; so for $q=2$ it remains to deal with $\PSU_4(2)$ and $\PSU_5(2)$. The first is isomorphic to $\PSp_4(3)$~\cite{CC85}, which we have dealt with. The second contains $\PSL_2(11)$, see~\cite{CC85}. For $q>2$, by Proposition~\ref{Thm_SU_3} central quotients of $\SU_3(q)$ have
commuting graphs which are not cographs. For $n=5$, $\PSU_n(q)$ contains such
a central quotient, obtained as above by writing the $5$-dimensional unitary
space as the sum of non-degenerate subspaces of dimensions $2$ and $3$. For
$n=4$, the space is the sum of two non-degenerate $2$-dimensional subspaces,
so $\PSU_4(q)$ contains a central product of two copies of $\SU_2(q)$.  So the
commuting graphs of these groups are not cographs either.

This concludes dealing with the classical groups.

\medskip

We show next that the commuting graphs of the Suzuki groups are cographs.

\begin{prop}
For each $q$, the commuting graph of $\Sz(q)={}^2B_2(q)$ is a cograph.
\end{prop}

\begin{proof}
It is well-known that ${}^2B_2(2)\cong 5:4$ is a Frobenius groups with cyclic kernel and cyclic complement. Therefore, by Proposition~\ref{Frobenius}, $\Gamma({}^2B_2(2))$ is a cograph. Thus, we can assume that $q>2$.

The information used in the next two paragraphs is taken from Suzuki's original
paper~\cite{suzuki2}; it can also be found in \cite[Section 4.2]{Wilson-book}.

It suffices to show that the graph obtained by removing the identity is a
cograph. Now ${}^2B_2(q)$ has maximal cyclic subgroups of orders $q-1$ and
$q\pm\sqrt{2q}+1$, and every element of odd order lies in one of these; so
the non-identity elements of odd order carry a disjoint union of complete
graphs, with no edges to the remaining elements.
The remaining elements have order~$2$ or $4$, and since ${}^2B_2(q)$ is a
\emph{CIT-group} (one in which the centralizer of any involution is a
$2$-group), the remaining vertices lie in a disjoint union of copies of the
commuting graph on $P\setminus\{e\}$, where $P$ is a Sylow $2$-subgroup. Now
$|P|=q^2$ and $|Z(P)|=q$; elements of $Z(P)$ are joined to every element of $P$
so we may remove them. The remaining elements of $P$ have order $4$. If $g$
is one of these, then $|C_P(g)|=2q$ and $C_P(g)$ consists of two cosets of
$Z(P)$, so it is abelian; thus the elements of order $4$ fall into $q-1$
sets of size $q$, each set a complete graph with no edges between different
sets. Thus the graph is indeed a cograph.
\qed
\end{proof}

Next, the small Ree group $R_1(q)={}^2G_2(q)$, with $q$ an odd power of $3$, is not
simple for $q=3$; for larger $q$, it contains $\PSL_2(q)$ (the centralizer
of an involution in ${}^2G_2(q)$ is $\mathbb{Z}_2\times\PSL_2(q)$ \cite{Kleidman2}).

The \emph{Tits group} ${}^2F_4(2)'$ contains a subgroup isomorphic to
$\PSL_2(25)$ (this can be found in the $\mathbb{ATLAS}$ \cite{CC85}); so its
commuting graph is not a cograph. The group ${}^2F_4(2)'$ is a subgroup of the
large Ree group $R_2(2^d)={}^2F_4(2^d)$ for all odd $d$ \cite{Malle}.

\medskip

To summarize our results, combined with the earlier results on alternating
and sporadic groups, and noting that $A_5 \cong\PSL_2(4)$:

\begin{theorem}\label{Simple-cog}
Let $G$ be a non-abelian finite simple group. Then the commuting graph of $G$
is a cograph if and only if $G$ is isomorphic to $\PSL_2(q)$, with $q$ a power
of $2$ and $q>2$, or to $\Sz(q)={}^2B_2(q)$, with $q$ an odd power of $2$ and $q>2$.
\end{theorem}

\section{Appendix}\label{appendix}

In this section, we consider small groups whose commuting graphs are cographs or chordal graphs.

\begin{lemma}\label{sg-ac}
If $|G/Z(G)|=pqr$ with primes $p,q,r$ {\rm(}not necessary distinct{\rm)}, then $G$ is an
AC-group.
\end{lemma}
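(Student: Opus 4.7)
The plan is to verify the definition of AC-group directly: fix $x \in G\setminus Z(G)$, set $C = C_G(x)$, and argue that $C$ is abelian via the classical criterion that $C/Z(C)$ cyclic forces $C$ abelian. (This criterion underlies Proposition~\ref{p-ac-cyc}, so it is already in play.)

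The first step is to pin down the subgroup chain $Z(G) \le Z(C) \le C \lneq G$. Indeed, $Z(G)\le C$ because central elements commute with everything, and $x\in Z(C)$ because every element of $C$ commutes with $x$ by definition of the centralizer. Since $x\notin Z(G)$, we get $Z(G) \lneq Z(C)$, so $|Z(C)/Z(G)|\ge 2$; and $x\notin Z(G)$ also produces some $g\in G$ not centralizing $x$, so $C\lneq G$, whence $|C/Z(G)|$ is a \emph{proper} divisor of $|G/Z(G)|=pqr$.

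The main (and essentially only) step is a count of prime factors. Write $\Omega(n)$ for the number of prime factors of $n$ counted with multiplicity; then $\Omega(pqr)=3$, so every proper divisor of $pqr$ satisfies $\Omega\le 2$, giving $\Omega(|C/Z(G)|)\le 2$. Combined with $\Omega(|Z(C)/Z(G)|)\ge 1$ and the identity $|C/Z(C)|=|C/Z(G)|/|Z(C)/Z(G)|$, we obtain $\Omega(|C/Z(C)|)\le 1$, i.e.\ $|C/Z(C)|$ is either $1$ or a prime. In either case $C/Z(C)$ is cyclic, so $C$ is abelian; as $x$ was an arbitrary element of $G\setminus Z(G)$, $G$ is an AC-group.

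There is no real obstacle here: the argument treats the three possibilities for $pqr$ (distinct primes, exactly two equal, or all equal) uniformly, since the bound on the number of prime factors of a proper divisor of $pqr$ does not depend on the equality pattern. One small point worth noting is that it would be a mistake to try to reduce to Proposition~\ref{p-ac-cyc}: its hypothesis that every non-identity element of $G/Z(G)$ has \emph{cyclic} centralizer in $G/Z(G)$ can fail for the groups covered here (for instance in $A_4$, where $Z(A_4)=1$ and the Klein four-group is its own centralizer), but the chain $Z(G)\le Z(C)\le C$ still yields the desired conclusion one level lower, inside $C$ itself.
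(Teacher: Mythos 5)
Your proof is correct, and it is a genuine streamlining of the paper's argument. Both proofs start from the chain $Z(G)\le Z(C_G(x))\le C_G(x)<G$ and the fact that $[C_G(x):Z(G)]$ is a proper divisor of $pqr$, but they diverge from there. The paper splits into two cases: if $C_G(x)/Z(G)$ is cyclic it invokes the argument of Proposition~\ref{p-ac-cyc} to conclude $C_G(x)$ is abelian, and if $C_G(x)/Z(G)$ is non-cyclic (forcing its order to be a product of two primes) it runs an explicit two-generator argument showing $C_G(x)=\langle a,b,Z(G)\rangle$ with pairwise commuting generators. Your proof replaces all of this with the single extra observation that $x$ itself witnesses $Z(G)\lneq Z(C_G(x))$, so $[Z(C_G(x)):Z(G)]\ge 2$; combined with $[C_G(x):Z(G)]$ having at most two prime factors, the index-tower law gives $[C_G(x):Z(C_G(x))]$ at most one prime factor, hence $C_G(x)/Z(C_G(x))$ is cyclic and $C_G(x)$ is abelian. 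This eliminates the case split and the ad hoc generation argument, and handles all equality patterns among $p,q,r$ uniformly. Your remark about $A_4$ (Klein four-group self-centralizing, so Proposition~\ref{p-ac-cyc} cannot be applied directly) is accurate and explains why the paper speaks of ``the argument of'' Proposition~\ref{p-ac-cyc} rather than the proposition itself; your descent to $C/Z(C)$ is exactly the right fix.
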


\begin{proof}
Let $a\in G\setminus Z(G)$. It suffices to prove that $C_G(a)$ is abelian. Note that $Z(G)\subseteq Z(C_G(a))\subseteq C_G(a)<G$. Since $|G/Z(G)|=pqr$, we have that $|C_G(a)/Z(G)|<pqr$ and $|C_G(a)/Z(G)|$ is a divisor of $pqr$. By the argument of Proposition~\ref{p-ac-cyc}, it suffices to consider
the case where $C_G(a)/Z(G)$ is non-cyclic and has order $pq$.

Then $Z(G)a$ has order $p$ or $q$. Thus, $C_G(a)/Z(G)$ has an element, say $Z(G)b$, such that $C_G(a)/Z(G)=\langle Z(G)b,Z(G)a\rangle$. Note that $[b,a]=1$. Hence, $C_G(a)=\langle b,a,Z(G)\rangle$ is an abelian group, as desired. \qed
\end{proof}

A similar but easier proof shows the following.

\begin{cor}\label{sg-co1}
If $|G/Z(G)|=pq$ or $p$ where $p,q$ are primes, then $G$ is an AC-group. In particular, If $|G|=pq$ or $pqr$ with primes $p,q,r$, then $G$ is either an abelian group or an AC-group.
\end{cor}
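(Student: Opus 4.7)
The plan is to emulate the proof of Lemma~\ref{sg-ac} in the simpler setting where $|G/Z(G)|$ has at most two prime factors, and then derive the ``in particular'' part by reducing the hypothesis on $|G|$ to a hypothesis on $|G/Z(G)|$. The main ingredient throughout is the classical fact that if $G/Z(G)$ is cyclic then $G$ is abelian, together with Proposition~\ref{p-ac-cyc}.

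First, the case $|G/Z(G)|=p$ is actually vacuous: if $G/Z(G)$ were cyclic of prime order $p$, then $G$ would be abelian, forcing $|G/Z(G)|=1\ne p$, a contradiction. So no such group exists and the claim holds vacuously. For the case $|G/Z(G)|=pq$, I would take any $a\in G\setminus Z(G)$ and observe that $Z(G)<C_G(a)<G$, so $|C_G(a)/Z(G)|$ is a proper non-trivial divisor of $pq$, hence equal to $p$ or $q$. In either case $C_G(a)/Z(G)$ is cyclic of prime order and is generated by $aZ(G)$, so $C_G(a)=\langle a,Z(G)\rangle$. Since $Z(G)$ is central, this subgroup is abelian, and thus $G$ is an AC-group.

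For the ``in particular'' statement, I would suppose $|G|\in\{pq,pqr\}$ and that $G$ is non-abelian, and reduce to the two previously handled cases. Since $G$ is non-abelian we have $Z(G)\lneq G$, and $G/Z(G)$ is not cyclic (again by the classical result). Walking through the divisor lattice of $pq$ (respectively $pqr$) and discarding all cyclic orders shows that $|G/Z(G)|$ is either a product of two primes or, when $|G|=pqr$, possibly the full $pqr$. The two-prime case is covered by the first part of the corollary, the three-prime case by Lemma~\ref{sg-ac}, so $G$ is an AC-group in either scenario.

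There is no real obstacle; the argument is a clean specialisation of Lemma~\ref{sg-ac}. The only point that needs a little care is that the primes in the hypothesis are not assumed distinct, so in the edge case $|G/Z(G)|=p^2$ one must verify that $|C_G(a)/Z(G)|$ really is $p$ and not $p^2$, which follows immediately because $a\notin Z(G)$ forces $C_G(a)\lneq G$. With this bookkeeping the proof is complete.
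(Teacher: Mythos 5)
Your argument is correct and follows essentially the same route the paper has in mind: the paper merely notes that ``a similar but easier proof'' to that of Lemma~\ref{sg-ac} applies, and your write-up is a clean instantiation of that specialization, with the added (correct) observation that the case $|G/Z(G)|=p$ is vacuous.
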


Note that these primes $p,q,r$ in Lemma~\ref{sg-ac} and Corollary~\ref{sg-co1} are not necessarily distinct.
Now combining Corollary~\ref{sg-co1},
Lemma~\ref{sg-ac} and Proposition~\ref{cent-abe}, we have the following result.

\begin{lemma}\label{sg-pdc}
$\Gamma(G)$ is a cograph as well as a chordal graph if one of the following holds{\rm:}
\begin{enumerate}
\item $G$ is abelian;
\item $|G|=pqr$ or $pq$, where $p,q,r$ are primes {\rm(}not necessarily distinct{\rm);}
\item $|G/Z(G)|=p$, $pq$ or $pqr$, where $p,q,r$ are primes {\rm(}not necessary distinct{\rm)}.
\end{enumerate}
\end{lemma}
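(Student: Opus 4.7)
The plan is a straightforward synthesis: in each of the three listed conditions I reduce to the dichotomy ``$G$ is abelian, or $G$ is an AC-group'', after which Proposition~\ref{cent-abe} (and the trivial fact that the commuting graph of an abelian group is complete) finishes the job. I will organize the proof as a short case analysis following the labels (a), (b), (c) in the statement.

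For (a), the graph $\Gamma(G)$ is complete, so it contains no induced $P_4$ and no induced cycle on $\ge 4$ vertices, hence is both a cograph and a chordal graph. For (b), if $|G|\in\{pq,pqr\}$ I invoke Corollary~\ref{sg-co1} directly: it tells us that $G$ is abelian or an AC-group, so the conclusion follows from case (a) or from Proposition~\ref{cent-abe}. For (c), if $|G/Z(G)|=p$ then $G/Z(G)$ is cyclic, which forces $G$ to be abelian (reducing to (a)); if $|G/Z(G)|=pq$ then Corollary~\ref{sg-co1} gives that $G$ is an AC-group; and if $|G/Z(G)|=pqr$ then Lemma~\ref{sg-ac} gives that $G$ is an AC-group. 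In the AC-group subcases, Proposition~\ref{cent-abe} yields that $\Gamma(G)$ is both a cograph and a chordal graph.

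Since every subcase reduces either to ``$G$ abelian'' or to ``$G$ an AC-group'', and both of these immediately imply the conclusion, there is essentially no obstacle; the proof is just a bookkeeping exercise that assembles Corollary~\ref{sg-co1}, Lemma~\ref{sg-ac}, and Proposition~\ref{cent-abe}. The only minor point to keep clean is the $|G/Z(G)|=p$ case, where one should note explicitly that $G/Z(G)$ cyclic implies $G$ abelian, so that (c) does not require a separate treatment beyond what (a) already gives.
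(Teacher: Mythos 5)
Your proof is correct and follows essentially the same route as the paper, which explicitly introduces Lemma~\ref{sg-pdc} with the phrase ``combining Corollary~\ref{sg-co1}, Lemma~\ref{sg-ac} and Proposition~\ref{cent-abe}.'' You have simply spelled out that combination case by case; the only cosmetic remark is that the $|G/Z(G)|=p$ subcase is actually vacuous (since $G/Z(G)$ cyclic forces $G$ abelian, hence $G/Z(G)$ trivial), but your handling of it is harmless and consistent with Corollary~\ref{sg-co1}.
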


\begin{prop}\label{mini-cog}
If $G$ is a group of minimal order such that $\Gamma(G)$ is not a cograph, then $|G|=24$. In particular, for $|G|\le 24$, $\Gamma(G)$ is a cograph if and only if $G\ncong S_4$.
\end{prop}

\begin{proof}
Proposition~\ref{syg-cg} implies that $\Gamma(S_4)$ is not a cograph. Thus, it suffices to show that if $|G|\le 24$, then $\Gamma(G)$ is a cograph if and only if $G\ncong S_4$. Now suppose that $|G|\le 24$. It follows from Lemma~\ref{sg-pdc} that if $|G|\notin \{16,24\}$, then $\Gamma(G)$ is a cograph.
If $|G|=16$, since a $2$-group has non-trivial centre, we have that $|G/Z(G)|=2^k$ with $k\le 3$, and so Lemma~\ref{sg-pdc}(c) implies that $\Gamma(G)$ is a cograph. Finally, we suppose that $|G|=24$. Note that the fact that if $G\ncong S_4$, then $G$ has non-trivial centre (cf. Burnside \cite{Bur11}). So, by  Lemma~\ref{sg-pdc}(c), we have that if $G\ncong S_4$, then $\Gamma(G)$ is a cograph, as desired. \qed
\end{proof}

In this paper, we identify a finite group by the unique identifier of this finite group in {\sc SmallGroups} library  in the \textsf{GAP} system \cite{gap}.
To be specific,
the $m$-th group of order $n$ is identified as \texttt{SmallGroup}$[n,m]$, where $[n,m]$ is called the GAP ID of the group.

By Lemma~\ref{zj-cg}, $\Gamma(S_3\times S_3)$ is not a cograph.
One can use the \textsf{GAP} system \cite{gap} to classify all finite groups $G$ satisfying $|G|\le 36$ and $\Gamma(G)$ is not a cograph; these groups are presented in Table \ref{tab1}.
In particular, by Lemma~\ref{zj-cg}, $\Gamma(S_3\times S_3)$ is not a cograph.

\begin{table}[htbp]\centering
\caption{All groups $G$ such that $\Gamma(G)$ is not a cograph and $|G|\le 36$  \label{tab1}}
\vskip2mm
{\tabcolsep=10pt
\begin{tabular}{|c|c|}
\hline
GAP ID    &  Structure                                  \\  \hline
$[24,12]$   & $S_4$                           \\
$[32,6]$  & $(\mathbb{Z}_2\times \mathbb{Z}_2\times \mathbb{Z}_2)\rtimes \mathbb{Z}_4$   \\
$[32,7]$  & $(\mathbb{Z}_8 \rtimes \mathbb{Z}_2)\rtimes \mathbb{Z}_2$                                \\
$[32,8]$  & $(\mathbb{Z}_2\times \mathbb{Z}_2)_{\cdot}(\mathbb{Z}_4\times \mathbb{Z}_2)$                               \\
$[32,43]$  & $\mathbb{Z}_8 \rtimes (\mathbb{Z}_2\times \mathbb{Z}_2)$                                 \\
$[32,44]$  & $(\mathbb{Z}_2\times Q_8)\rtimes \mathbb{Z}_2$                 \\
$[32,49]$  & $(\mathbb{Z}_2\times\mathbb{Z}_2\times\mathbb{Z}_2)\rtimes(\mathbb{Z}_2\times\mathbb{Z}_2)$  \\
$[32,50]$  & $(\mathbb{Z}_2\times Q_8)\rtimes \mathbb{Z}_2$   \\
$[36,10]$  & $S_3\times S_3$                                          \\
 \hline
\end{tabular}}
\end{table}

For the groups of order $32$, there are precisely two extra-special $2$-groups $D_8\circ D_8$ and $D_8\circ Q_8$ as follows:
\begin{eqnarray}\label{g-1}
  D_8\circ D_8&=&\langle x,y,z,w|x^2=y^2=z^2=w^2=[x,y]^2=[z,w]^2=1, \\
   &&[x,z]=[z,y]=[y,w]=[w,x]=1 \rangle
\end{eqnarray}
where $[x,y]=[z,w]$ is an involution belonging to $Z(D_8\circ D_8)$;
\begin{eqnarray}\label{g-2}
  D_8\circ Q_8&=&\langle x,y,z,w|x^2=y^2=z^4=w^4=[x,y]^2=[z,w]^2=1, \\
   &&[x,z]=[z,y]=[y,w]=[w,x]=1 \rangle
\end{eqnarray}
where $z^2=w^2=[x,y]=[z,w]$ is an involution belonging to $Z(D_8\circ Q_8)$.

\begin{prop}
If $G$ is a group of minimal order such that $\Gamma(G)$ is not a chordal graph, then $|G|=32$. In particular, if
$G\cong D_8\circ D_8$ or $D_8\circ Q_8$, then $\Gamma(G)$ is not chordal.
\end{prop}

\begin{proof}
Suppose that $G\cong D_8\circ D_8$ or $D_8\circ Q_8$. Then by \eqref{g-1} and
\eqref{g-2}, it is easy to see that the induced subgraph of
$\Gamma(G)$ by the set $\{x,y,z,w\}$
is isomorphic to $C_4$, which implies that $\Gamma(G)$ is not chordal.

Now, it suffices to show that if $|G|<32$, then $\Gamma(G)$ is a chordal graph. Suppose that $|G|\le 31$.
Note that a $p$-group has non-trivial centre where $p$ is a prime. From Lemma~\ref{sg-pdc}, it follows that if $|G|\ne 24$, then $\Gamma(G)$ is a chordal graph. Now consider $|G|= 24$.
If $G\cong S_4$, then by Proposition~\ref{sg-chog}, $\Gamma(G)$ is chordal. If $G\ncong S_4$, then by Lemma~\ref{sg-pdc} and the proof of Proposition~\ref{mini-cog}, $\Gamma(G)$ is chordal, as desired. \qed
\end{proof}

We have taken these computations further and determined, using \textsf{GAP} with
the package \texttt{GRAPE}~\cite{grape}, the groups of order up to $128$ whose
commuting graphs are not cographs. The numbers of groups are given in
Table~\ref{t:small}.

\begin{table}[htbp]
\caption{\label{t:small}Small groups whose commuting graph is not a cograph}
\[\tabcolsep=10pt
\begin{array}{|r|r|r|r|r|r|}
\hline
\hbox{Order} & \hbox{Groups}& \hbox{Order} & \hbox{Groups} & \hbox{Order} & \hbox{Groups}\\\hline
24 & 1 & 32 & 7 & 36 & 1 \\

48 & 10 & 54 & 2 & 60 & 2 \\

64 & 115 & 72 & 11 & 80 & 12 \\

84 & 1 & 96 & 112 & 100 & 2 \\

108 & 10 & 112 & 8 & 120 & 15 \\

126 & 2 & 128 & 1539 & & \\ \hline
\end{array}
\]
\end{table}

\paragraph{Acknowledgements}
The authors of this paper contributed equally to this work, they are ordered with respect to alphabet ordering in English.  The authors are grateful to the referee for thoughtful comments and suggestions that have helped improve our presentation.

This work was initiated during the online series of Research Seminars on
``Groups and Graphs'' in March--August 2021, run by Ambat Vijayakumar and
Aparna Lakshmanan, Cochin Univ.\ of Science and Technology.

The second author acknowledges the Isaac Newton Institute for Mathematical
Sciences, Cambridge, for support and hospitality during the programme
\textit{Groups, representations and applications: new perspectives}
(supported by \mbox{EPSRC} grant no.\ EP/R014604/1), where he held a Simons
Fellowship.

The third author was supported by National Natural Science Foundation of China (Grant Nos. 11801441 and 12326333) and  Shaanxi Fundamental Science Research Project for Mathematics and Physics (Grant No. 22JSQ024).

The fourth author is thankful to Prof.\ Andrey~V.\ Vasil'ev for his helpful comments which improved this text. Also the fourth author started thinking on this project during her visit to the Steklov Institute of Mathematics, and she is very thankful to colleagues from this institution for hospitality and moral support.

We are also grateful to Eamonn O'Brien for resolving one of the open questions
in the paper, finding nilpotent groups of class~$2$ whose commuting graphs are
isomorphic but which are not isoclinic.

\end{document}